\newtheorem{definition}{Definition}
\newtheorem{theorem}{Theorem}
\newtheorem{lemma}{Lemma}
\newtheorem{remark}{Remark}
\begin{document}

\title{\bfseries {\large ESTIMATION OF HARMONIC COMPONENT IN REGRESSION WITH
CYCLICALLY DEPENDENT ERRORS}}
\author{Ivanov, A.V.$^{1}$, Leonenko, N.N.$^{2},$ Ruiz-Medina, M.D.$^{3}$
and Zhurakovsky, B.M. $^{1}$}
\maketitle

\begin{abstract}
This paper deals with the estimation of hidden periodicities in a
non-linear regression model with stationary noise displaying
cyclical dependence. Consistency and asymptotic normality are
established for the least-squares estimates.
\end{abstract}

\noindent $^{1}$ National Technical University \textquotedblright Kyiv
Polytechnic Institute\textquotedblright

37 Peremogy Ave, 03056 Kyiv, Ukraine

\medskip

\noindent $^{2}$School of Mathematics, Cardiff University,\newline
Senghennydd Road, Cardiff CF24 4YH, United Kingdom

\medskip

\noindent $^{3}$Department of Statistics and Operations Research \newline
University of Granada, Campus Fuente Nueva s/n, E-18071 Granada, Spain

\bigskip

\noindent \textit{Keywords}: Asymptotic distribution theory, asymptotic
inference, hidden periodicities, nonlinear regression, vector parameter.

\medskip

\begin{quote}
\textit{AMS classifications}: 62E20; 62F10; 60G18
\end{quote}

\bigskip

\section{Introduction}

\bigskip Let us consider the regression model
\begin{equation}
x(t)=g(t,\theta )+\varepsilon (t),  \label{nonregr1}
\end{equation}%
\noindent where $g(t,\theta ):\mathbb{R}\times \Theta ^{c}\rightarrow
\mathbb{R}$ is a continuous non-linear function of unknown parameter vector $%
\theta \in \Theta ,$ $\theta =(\theta ^{1},\ldots ,\theta ^{q}),$ with $%
\Theta ^{c}$ being the closure in $\mathbb{R}^{q}$ of open set $\Theta
\subset \mathbb{R}^{q}$, and $\{\varepsilon (t),\ t\in \mathbb{R}\}$ is the
random noise process defining the error term through time. Process $%
\varepsilon $ is assumed to be a zero-mean stationary process, generated by
non-linear transformation of a stationary Gaussian process $\xi $ displaying
cyclical dependence. We address the problem of the estimation of the unknown
parameter $\theta $ from the observation of random process $\{x(t),t\in
\lbrack 0,T]\}$, when $T\rightarrow \infty .$

The least-squares estimate (LSE) $\hat{\theta}_{T}$ of an unknown parameter $%
\theta \in \Theta $, obtained from the observations $\{x(t),t\in [0,T]\},$
is any random variable $\hat{\theta}_{T}\in \Theta ^{c},$ having the
property
\begin{equation}
Q_{T}(\hat{\theta}_{T})=\inf_{\tau \in \Theta ^{c}}Q_{T}(\tau ),\quad
Q_{T}(\tau )=\frac{1}{T}\int\limits_{0}^{T}[x(t)-g(t,\tau )]^{2}dt,
\label{lse}
\end{equation}%
where $\Theta ^{c}$ is the closure of $\Theta $.

Our main interest in this paper is the problem of detecting hidden
periodicities, that is, the problem of estimation of the unknown parameters
of the regression function
\begin{equation}
g(t,\theta )=\sum_{k=1}^{N}\left( A_{k}\cos \varphi _{k}t+B_{k}\sin \varphi
_{k}t\right) ,  \label{nonregr2}
\end{equation}%
where $\theta =(\theta _{1},\theta _{2},\theta _{3},\dots ,\theta
_{3N-2},\theta _{3N-1},\theta _{3N})=(A_{1},B_{1},\varphi _{1},\ldots
,A_{N},B_{N},\varphi _{N})\in \mathbb{R}^{3N},%
\;C_{k}^{2}=A_{k}^{2}+B_{k}^{2}>0,\;k=1,\ldots ,N,\quad 0\leq \underline{%
\varphi }<\varphi _{1}<\cdots <\varphi _{N}<\overline{\varphi }<\infty .$

This paper provides the asymptotic properties of the LSE of the nonlinear
regression model (\ref{nonregr1}) with regression function (\ref{nonregr2})
and cyclical dependent stationary noise. Specifically, the consistency and
the convergence to the Gaussian distribution of the LSE of the parameters
involved in the definition of the regression function (\ref{nonregr2}) are
derived in this paper.

Although in the subsequent development, we will refer to the nonlinear
regression model (\ref{nonregr1}) with regression function (\ref{nonregr2}),
and cyclical dependent stationary noise with covariance function (\ref{cov1}%
), the results given in Section 4 and 5 on linearization, and asymptotic
uniqueness, as well as on asymptotic normality hold for a more general class
of regression functions satisfying conditions {\bfseries B1}-{\bfseries B6}
formulated below. The general class of non-linear regression functions that
could be considered includes the family of functions $g$ whose matrix-valued
measure, defined for $T>0$ by,
\begin{equation}
\boldsymbol{\mu }_{T}(d\lambda )=(\mu _{T}^{jl}(d\lambda ,\theta
))_{j,l=1}^{q},\quad \mu _{T}^{jl}(d\lambda ,\theta )=\frac{%
g_{T}^{j}(\lambda ,\theta )\overline{g_{T}^{l}(\lambda ,\theta )})d\lambda }{%
\left( \int\limits_{\mathbb{R}}\left\vert g_{T}^{j}(\lambda ,\theta
)\right\vert ^{2}d\lambda \int\limits_{\mathbb{R}}\left\vert
g_{T}^{l}(\lambda ,\theta )\right\vert ^{2}d\lambda \right) ^{\frac{1}{2}}}%
,\quad j,l=1,\dots ,q,  \label{regmes1}
\end{equation}%
\begin{equation}
g_{T}^{j}(\lambda ,\theta )=\int\limits_{0}^{T}e^{it\lambda }\frac{\partial
}{\partial \theta _{j}}g(t,\theta )dt,\quad j=1,\dots ,q,\quad \lambda \in
\mathbb{R},\quad \theta \in \Theta ,  \label{regmes2}
\end{equation}%
\noindent could weakly converge as $T\rightarrow \infty$ to an atomic
spectral measure $\boldsymbol{\mu }$ with atoms $\Xi _{regr}=\{\delta
_{1},\dots ,\delta _{n}\}.$ Limit theorems for non-linear transformations of
Gaussian stationary processes are here considered. In the derivation of
these limit results, the mentioned weak-convergence to the spectral measure
associated with the regression function, and the diagram formulae are
applied. In the discrete case this phenomenon was discussed by Yajima
(1988,1991) in some other regression scheme.

Note that the classical non-central limit theorems (Taqqu, 1979, and
Dobrushin and Major, 1979) can be viewed as particular cases of the general
setting considered here, when the noise is the non-linear transformation of
a Gaussian process with the unique singular point in the spectrum: $\Xi
_{noise}=\{0\}$, and the regression function is such that $\frac{\partial }{%
\partial \theta }g(t,\theta )\equiv 1$ ($q=1).$ In this case, the regression
measure $\mu _{T} $ is the Fejer kernel, which tends to the delta-measure
with atom at zero, that is, the limit $\mu $-measure spectrum consists of
one point: $\Xi_{regr}=\{0\}.$ Nonstandard renormalizations and special
limiting distributions are required here due to the fact that $\Xi _{noise}
\cap \Xi _{regr}\neq \emptyset.$ Some limiting distributions for the case
where the two spectral point sets $\Xi _{noise}$ and $\Xi _{regr}$ are in
fact overlapped, in the discrete case, can be derived from the papers by
Taqqu (1975, 1979), Rosenblatt (1981, 1987), Viano et al (1995), Oppenheim
et al (2002), Haye (2002), Haye and Viano (2002), Haye and Phillipe (2003),
Arcones (1994, 2000). In the continuous time case, the limiting
distributions for non-empty set $\Xi _{noise}\cap \Xi_{regr}$ can be
obtained from the papers and book by Ivanov and Leonenko (1989, 2004, 2008),
and Leonenko and Taufer (2006). For the non-linear regression model with
function $g$ given by (\ref{nonregr2}), this subject will be considered in
subsequent papers. In the present paper we consider the case when $\Xi
_{noise} \cap \Xi _{regr}= \emptyset .$ This assumption as it will be seen
leads to the asymptotic normality of the LSE of parameters of model (\ref%
{nonregr2}).

During the last thirty years, a number of papers have been devoted to limit
theorems for non-linear transformations of Gaussian processes and random
fields. The pioneer results are those of Taqqu (1975, 1979) and Dobrushin
and Major (1979) for convergence to Gaussian and non-Gaussian distributions
under long range dependence in terms of Hermite expansions, and Breuer and
Major (1983), Avram and Brown (1989), Ivanov and Leonenko (1989), Avram
(1992), Avram and Fox (1992) for convergence to Gaussian limit distribution
by using diagram formulae or graphical methods. This line of research
continues to be of interest today, see Berman (1992) for $m$-dependent
approximation approach, Ho and Hsing (1997) for martingale approach, Nualart
and Pecatti (2005) (see also Pecatti and Tudor (2004)) for using Malliavin
calculus, Avram, Leonenko and Sakhno (2010) for an extension of graphical
method for random fields, to name only a few papers. The volume of Doukhan,
Oppenheim and Taqqu (2003) contains outstanding surveys of the field. In
particular, that volume discusses different definitions of short range
dependence and long range dependence of stationary processes in terms of the
autocorrelation function (the integral of the correlation function diverges)
or the spectrum (the spectral density has a singularity at zero).

Non-linear regression models with independent or weakly dependent errors
have been extensively studied (see, for example, Hannan (1973), Ivanov and
Leonenko (1989), Ivanov (1997), Skouras (2000), Polard and Radchenko (2006)
and the references therein). The first results on non-linear regression with
long-range dependence (LRD) were obtained by Robinson and Hidalgo (1997).
They established conditions for consistency of some estimates of a parameter
of non-linear regression with LRD errors in discrete time models. Important
results on asymptotic distribution of M-estimators in non-linear regression
models with discrete time and LRD property of the noise process are
presented in Koul and Baillie (2003), and Koul (1996) papers, both, for
smooth and more general score functions.

Note that as we have considered the case of continuous time regression,
where the response variables are observed over continuous time, parameters
characterizing local regularity properties, such as parameter of
intermittency, for example, in the fractional Riesz-Bessel motion model, can
be estimated in this setting (see, for example, Avram, Leonenko and Sakhno
(2010)).

The asymptotic theory of LSE in non-linear regression with LRD has been
considered by Mukhergee (2000) and Ivanov and Leonenko (2004, 2008). The
papers by Ivanov and Leonenko (2009) and Ivanov and Orlovsky (2008) discuss
the asymptotic distributions of a class of M-estimates and L$_{p}$-estimates
($1<p<2$) in the nonlinear regression model with LRD. Our paper is a
continuation of these papers.

The problem of the estimation of the parameters characterizing the
distribution of the noise is not addressed here. This will be the subject of
subsequent papers in the spirit of the paper by Ivanov and Leonenko (2008).

\section{Stationary processes with cyclical dependence}

We recall the assumptions that will be made on the Gaussian process $\xi $
generating the random noise $\varepsilon ,$ representing the time-dependent
error term in the regression model (\ref{nonregr1}). Specifically, we will
consider a stationary process $\xi $ in continuous time defined on a
complete probability space $(\Omega ,\mathbb{F},P):$
\begin{equation*}
\xi (t)=\xi (\omega ,t):\Omega \times \mathbb{R}\longrightarrow \mathbb{R},
\end{equation*}%
\noindent satisfying the following assumption.

\textbf{A1.} Random function $\xi =\{\xi (t),t\in \mathbb{R}\}$ is a
real-valued and measurable stationary mean-square continuous Gaussian
process with $\mathit{E}\xi (t)=0, $ and $\mathit{E}\xi ^{2}(t)=1.$ Its
covariance function (c.f.) is of the form:
\begin{equation}
B\left( t\right) =\mathit{E}[\xi (0)\xi (t)]=\sum_{j=0}^{\kappa
}D_{j}B_{\alpha _{j},\varkappa _{j}}\left( t\right) ,\ t\in \mathbb{R},\
\kappa \geq 0, \quad \quad \sum_{j=0}^{\kappa }D_{j}=1,\ D_{j}\geq 0,\
j=0,\dots ,\kappa ,  \label{cov1}
\end{equation}%
\noindent where%
\begin{equation*}
B_{\alpha _{j},\varkappa _{j}}\left( t\right) =\frac{\cos \left( \varkappa
_{j}t\right) }{\left( 1+t^{2}\right) ^{\alpha _{j}/2}},\quad 0\leq \varkappa
_{0}<\varkappa _{1}<...<\varkappa _{\kappa },\quad \alpha _{j}>0,\quad t\in
\mathbb{R},\quad j=0,\dots ,\kappa ,
\end{equation*}

\begin{remark}
If $\varkappa_{0}=0$ and $0<\alpha_{0}<1,$ process $\xi$ displays
long-range dependence. Otherwise, process $\xi $ is of short range
dependence.
\end{remark}

The c.f. $B(t),$ $t\in \mathbb{R},$ admits the following spectral
representation:
\begin{equation*}
B(t)=\int\limits_{\mathbb{R}}e^{i\lambda t}f(\lambda )d\lambda ,\quad t\in
\mathbb{R},
\end{equation*}%
\noindent where the spectral density (s.d.) is of the form:
\begin{equation*}
f\left( \lambda \right) =\sum_{j=0}^{\kappa }D_{j}f_{\alpha _{j},\varkappa
_{j}}\left( \lambda \right) ,\quad \lambda \in \mathbb{R},
\end{equation*}%
\noindent with, for $j=0,\dots,\kappa ,$ $f_{\alpha
_{j},\varkappa_{j}}\left( \lambda \right) $ being defined by
\begin{equation*}
f_{\alpha _{j},\varkappa _{j}}\left( \lambda \right) =\frac{c_{1}\left(
\alpha _{j}\right) }{2}\left[ K_{\frac{\alpha _{j}-1}{2}}\left( \left\vert
\lambda +\varkappa _{j}\right\vert \right) \left\vert \lambda +\varkappa
_{j}\right\vert ^{\frac{\alpha _{j}-1}{2}}+K_{\frac{\alpha_{j}-1}{2}}\left(
\left\vert \lambda -\varkappa _{j}\right\vert \right) \left\vert \lambda
-\varkappa _{j}\right\vert ^{\frac{\alpha _{j}-1}{2}}\right] ,\quad \lambda
\in \mathbb{R},
\end{equation*}%
\noindent and
\begin{equation*}
c_{1}\left( \alpha_{j} \right) =\frac{2^{\left( 1-\alpha_{j} \right) /2}}{%
\sqrt{\pi }\,\Gamma \left( \frac{\alpha_{j}}{2}\right) }.
\end{equation*}

\bigskip \noindent Here,
\begin{equation*}
K_{\nu }\left( z\right) =\frac{1}{2}\int_{0}^{\infty }s^{\nu -1}\exp \left\{
-\frac{1}{2}\left( s+\frac{1}{s}\right) z\right\} ds,\quad z\geq 0,\quad \nu
\in \mathbb{R},
\end{equation*}%
\noindent is the modified Bessel function of the third kind and order $\nu $
or McDonald's function.

The following derived identities constitute an improvement and correction of
Anh, Knopova and Leonenko (2004). Indeed, we omit some details. For a small $%
z$, the following asymptotic expansions \ are known (see, i.e., Gradshteyn
and Ruzhik (2000), formulae 8.825, 8.445 and 8.446): if $\nu \notin \mathbb{Z%
}$,

\begin{equation*}
K_{-\nu }\left( z\right) =K_{\nu }\left( z\right) =
\end{equation*}%
\begin{equation*}
=\frac{\pi }{2\sin (\pi \nu )}\left\{ \sum_{j=0}^{\infty }\frac{%
(z/2)^{2j-\nu }}{j!\Gamma (j+1-\nu )}-\sum_{j=0}^{\infty }\frac{%
(z/2)^{2j-\nu }}{j!\Gamma (j+1+\nu )}\right\} ,
\end{equation*}%
while if $\nu =\pm m,$ where $m$ is a nonegative integer,%
\begin{equation*}
K_{\nu }\left( z\right) =\frac{1}{2}\sum_{j=0}^{m-1}\frac{(-1)^{j}(m-j-1)!}{%
j!}\left( \frac{z}{2}\right) ^{2j-m}+
\end{equation*}%
\begin{equation*}
+(-1)^{m+1}\sum_{j=0}^{\infty }\frac{(z/2)^{m+2j}}{j!(m+j)!}\left\{ \ln
\frac{z}{2}-\frac{1}{2}\Psi (j+1)-\frac{1}{2}\Psi (j+m+1)\right\} ,
\end{equation*}%
where $\Psi (z)=(\frac{d}{dz}\Gamma (z))/\Gamma (z)$ is the logarithm
derivative of the Gamma function.

\noindent Therefore we have: for $\alpha _{j}>1$
\begin{equation*}
\lim_{\lambda \rightarrow 0}f_{\alpha _{j},0}\left( \lambda \right) =\frac{%
\Gamma \left(\frac{\alpha _{j}-1}{2}\right)}{\left[ 2\sqrt{\pi }\Gamma (%
\frac{\alpha _{j}}{2})\right]} ,
\end{equation*}%
for $\alpha _{j}=1,$ and $\lambda \rightarrow 0$%
\begin{equation*}
f_{1,0}\left( \lambda \right) \sim \frac{1}{\pi }\left\{ -\ln
\left\vert \lambda \right\vert +\ln 2+\Psi (1)\right\} ,
\end{equation*}%
where $\Psi (1)=-\gamma ,\gamma $ is the Euler constant.

For $0<\alpha _{j}<1,$ and $\lambda \rightarrow 0$
\begin{equation*}
f_{\alpha _{j},0}\left( \lambda \right) =c_{2}\left( \alpha _{j}\right)
\frac{1}{\left\vert \lambda \right\vert ^{1-\alpha _{j}}}(1-h_{j}(\left\vert
\lambda \right\vert )),
\end{equation*}%
\noindent where $c_{2}(\alpha _{j})=[2\Gamma (\alpha _{j})\cos \frac{\alpha
_{j}\pi }{2}]^{-1},$ and $|h_{j}(|\lambda |)|<1.$

\noindent Thus, for $j=0,\dots ,\kappa ,$ $0<\alpha _{j}<1,$ in the
neighborhood of the points $\varkappa_{j}:$
\begin{equation}
f_{\alpha _{j},\varkappa _{j}}\left( \lambda \right) =c_{2}\left( \alpha
_{j}\right) \left[ \left\vert \lambda +\varkappa _{j}\right\vert ^{\alpha
_{j}-1}\left( 1-h_{j}\left( \left\vert \lambda +\varkappa _{j}\right\vert
\right) \right)\right] .  \label{f1}
\end{equation}
Therefore, the s.d. $f$ has $2\kappa +2$ different singular points $\left\{
-\varkappa _{\kappa },-\varkappa _{\kappa -1},..,-\varkappa _{1},-\varkappa
_{0},\varkappa _{0},\varkappa _{1},...,\varkappa _{\kappa }\right\} $under
condition \textbf{A1}, when $\varkappa _{0}\neq 0,$ and $0<\alpha _{j}<1,$ $%
j=0,\dots ,\kappa .$ If $\varkappa _{0}=0,$ the s.d. $f$ has $2\kappa +1$
different singular points.

For $\alpha _{j}=1$ and $\lambda \rightarrow \pm \varkappa _{j}:$
\begin{equation*}
f_{1,\varkappa _{j}}\left( \lambda \right) \sim \frac{c_{1}\left( \alpha
_{j}\right) }{2}K_{0}\left( \left\vert 2\varkappa _{j}\right\vert \right) +%
\frac{1}{2}\frac{1}{\pi }\left\{ - \ln \left\vert \lambda \mp \varkappa
_{j}\right\vert +\ln 2+\Psi (1)\right\} ,
\end{equation*}%
while for $\alpha _{j}>1$ and $\lambda \rightarrow \pm \varkappa _{j}:$%
\begin{equation*}
f_{\alpha _{j},\varkappa _{j}}\left( \lambda \right) \rightarrow \frac{%
c_{1}\left( \alpha _{j}\right) }{2}K_{\frac{\alpha _{j}-1}{2}}\left(
\left\vert 2\varkappa _{j}\right\vert \right) +\frac{1}{2}\frac{\Gamma (%
\frac{\alpha _{j}-1}{2})}{\left[ 2\sqrt{\pi }\Gamma (\frac{\alpha _{j}}{2})%
\right] }.
\end{equation*}

Similar results can be obtained for c.f.'s defined as linear combinations of
the functions
\begin{equation*}
R_{\alpha _{j},\varkappa _{j}}\left( t\right) =\frac{\cos \left( \varkappa
_{j}t\right) }{\left( 1+\left\vert t\right\vert ^{\rho _{j}}\right) ^{\alpha
j}},\quad \varkappa _{j}\in \mathbb{R},\quad 0<\rho _{j}\leq 2,\quad \alpha
_{j}>0,\varkappa _{j}\neq 0,\quad j=0,\dots ,\kappa
\end{equation*}%
\noindent (see again Ivanov and Leonenko, 2004, and Anh, Knopova and
Leonenko, 2004, for details, also some formulae of the last paper have been
corrected in the text).

\section{Consistency}

This section is devoted to the derivation of the weak-consistency of the LSE

parameter estimator, in the Walker sense. Some additional conditions are
first formulated, needed in the subsequent results.

\noindent\ \ \ \ \ \ \ \ \textbf{A2}. The stochastic process $\varepsilon $
is given by $\varepsilon (t)=G(\xi (t)),$ $t\in \mathbb{R},$ with $\xi (t)$
satisfying condition \textbf{A1}, and $G:\mathbb{R}\longrightarrow \mathbb{R}
$ being a non-random measurable function such that $\mathit{E}G(\xi (0))=0,$
and $\mathit{E}G^{4}(\xi (0))<\infty .$

Under condition \textbf{A2}, function $G\in L_{2}(\mathbb{R},\varphi (x)dx),$
with $\varphi (x)=\frac{1}{\sqrt{2\pi }}e^{-\frac{x^{2}}{2}},$ $x\in \mathbb{%
R},$ being the standard Gaussian density, and
\begin{equation}
G(x)=\sum_{k=1}^{\infty }\frac{C_{k}}{k!}H_{k}(x),\quad \sum_{k=1}^{\infty }%
\frac{C_{k}^{2}}{k!}=\mathit{E}[G^{2}(\xi (0))]<\infty ,  \label{3.1}
\end{equation}%
\noindent where%
\begin{equation*}
C_{k}=\int_{\mathbb{R}}G(x)H_{k}(x)\varphi (x)dx.
\end{equation*}%
Here, the Hermite polynomials
\begin{equation*}
H_{k}(x)=(-1)^{k}[\sqrt{2\pi }\varphi \left( x\right) ]^{-1}\frac{d^{k}}{%
dx^{k}}[\sqrt{2\pi }\varphi \left( x\right) ],\quad k=0,1,2,\ldots ,
\end{equation*}%
\noindent constitute a complete orthogonal system in the Hilbert space $%
L_{2}(\mathbb{R},\varphi (x)dx).$

\begin{remark}
Denoting by $\phi $ the distribution function (d.f.) of the standard
normal distribution, it is easy to see that the process $\varepsilon
(t)=G(\xi (t))=F^{-1}(\phi (\xi(t)))$ has a marginal d.f. $F$ for
any strictly increasing d.f. with zero mean. Thus, we can introduce
regression models with Student errors, for example.
\end{remark}

\textbf{A3}. We assume that the function $G$ has Hermite rank $Hrank(G)=m,$
that is, either $C_{1}\neq 0$ and $m=1,$ or, for some $m\geq 2,$ $%
C_{1}=\cdots =C_{m-1}=0,\ C_{m}\neq 0.$

Under conditions \textbf{A1}-\textbf{A3}, the process $\{\varepsilon
(t)=G(\xi (t)),\ t\in \mathbb{R}\},$ admits a Hermite series expansion in
the Hilbert space $L_{2}(\Omega ,\mathbb{F} ,P):$%
\begin{equation}
\varepsilon (t)=G(\xi (t))=\sum_{k=m}^{\infty }\frac{C_{k}}{k!}H_{k}(\xi
(t)).  \label{3.3}
\end{equation}

We use the following modification of the LSE proposed by Walker (1973), see
also Ivanov (1980, 2010). Consider a monotone non-decreasing system of open
sets $S_{T}\subset S(\underline{\varphi },\overline{\varphi }),\ T>T_{0}>0,$
given by the condition that the true value of unknown parameter $\varphi ,$
belongs to $S_{T},$ and
\begin{equation}
\lim_{T\rightarrow \infty }\inf_{1\leq j<k\leq N,\ \varphi \in
S_{T}}T(\varphi _{k}-\varphi _{j})=+\infty ,\quad \lim_{T\rightarrow \infty
}\inf_{\varphi \in S_{T}}T\varphi _{1}=+\infty ,  \label{3.6}
\end{equation}%
\noindent where
\begin{equation*}
S(\underline{\varphi },\overline{\varphi })=\left\{ 0\leq \underline{\varphi
}<\varphi _{1}<\dots <\varphi _{N}<\overline{\varphi }<\infty \right\} .
\end{equation*}

\begin{remark}
Assumption (\ref{3.6}) allows to distinguish the parameters $\varphi_{k},$ $%
k=1,\dots,N,$ and prove the consistency of the LSE (see Theorem 1 below).
\end{remark}

The LSE \ $\widehat{\theta }_{T}$ in the Walker sense of unknown parameter $%
\theta =(A_{1},B_{1},\varphi _{1},\ldots ,A_{N},B_{N},\varphi _{N}$ ) in the
model (\ref{nonregr1}) with nonlinear regression function (\ref{nonregr2})
is said to be any random vector $\hat{\theta}_{T}\in \Theta ^{c}$ having the
property:%
\begin{equation}
Q_{T}(\hat{\theta}_{T})=\inf_{\tau \in \Theta ^{c}}Q_{T}(\tau ),  \label{3.4}
\end{equation}%
\noindent where $Q_{T}(\tau )$ is defined in (\ref{lse}), and $\Theta
\subset \mathbb{R}^{3N}$ is such that $A_{k}\in \mathbb{R},\ B_{k}\in
\mathbb{R},\ k=1,\ldots ,N$, and $\varphi \in S_{T}^{c},$ the closure in $%
\mathbb{R}^{N}$ of the set $S_{T}.$

\begin{remark}
The 2-nd condition (\ref{3.6}) is satisfied if $\underline{\varphi }>0.$ If $%
S_{T}\subset S(\underline{\varphi },\overline{\varphi }),$ the relations
given in (\ref{3.6}) are, for example, satisfied for a parametric set $S_{T}$%
, such that
\begin{equation*}
\inf_{1\leq j<k\leq N,\ \varphi \in S_{T}}(\varphi _{k}-\varphi
_{j})=T^{-1/2},\quad \inf_{\varphi \in S_{T}}\varphi _{1}=T^{-1/2}.
\end{equation*}
\end{remark}

\begin{theorem}
Under conditions \textbf{A1} and \textbf{A2}, the LSE in the Walker sense
\begin{equation*}
\hat{\theta}_{T}=(\hat{A}_{1T},\hat{B}_{1T},\hat{\varphi}_{1T},\ldots ,\hat{A%
}_{NT},\hat{B}_{NT},\hat{\varphi}_{NT})
\end{equation*}%
\noindent of the unknown parameter $\theta =(A_{1},B_{1},\varphi _{1},\ldots
,A_{N},B_{N},\varphi _{N})$ of the regression function (\ref{nonregr2}) is
weakly consistent as $T\rightarrow \infty $, that is,
\begin{equation*}
\hat{A}_{k_{T}}\overset{P}{\longrightarrow }A_{k},\ \hat{B}_{k_{T}}\overset{P%
}{\longrightarrow }B_{k},\ T(\hat{\varphi}_{k_{T}}-\varphi _{k})\overset{P}{%
\longrightarrow }0,\quad k=1,\ldots ,N,
\end{equation*}%
\noindent where $\overset{P}{\longrightarrow }$ stands for the convergence
in probability.
\end{theorem}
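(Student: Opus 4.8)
The strategy is the classical Walker-type argument for detecting hidden periodicities, adapted to the cyclically dependent noise here. The key point is that the amplitudes $C_k^2 = A_k^2 + B_k^2 > 0$ are bounded away from zero and the frequencies $\varphi_k$ are separated by the growing gaps in \eqref{3.6}, so the harmonic components are asymptotically orthogonal and the periodogram of $x(t)$ concentrates its mass near the true frequencies. Write $Q_T(\tau) = \frac1T\int_0^T x^2(t)\,dt - \frac2T\int_0^T x(t)g(t,\tau)\,dt + \frac1T\int_0^T g^2(t,\tau)\,dt$; the first term does not depend on $\tau$, so minimising $Q_T$ is equivalent to maximising $U_T(\tau) := \frac2T\int_0^T x(t)g(t,\tau)\,dt - \frac1T\int_0^T g^2(t,\tau)\,dt$. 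Substituting $x(t) = g(t,\theta) + \varepsilon(t)$ splits $U_T$ into a deterministic part (a function of $\tau$ and the true $\theta$) and a stochastic part involving $\frac1T\int_0^T \varepsilon(t)g(t,\tau)\,dt$.

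First I would establish the needed elementary integral asymptotics: for $\varphi,\psi \in [\underline\varphi,\overline\varphi]$, $\frac1T\int_0^T \cos\varphi t\cos\psi t\,dt \to \frac12\mathbf 1\{\varphi=\psi\}$ and similarly for the other products, with the convergence uniform once $T|\varphi-\psi|\to\infty$; this is exactly what \eqref{3.6} buys us, and it shows that the deterministic part of $U_T$ converges (uniformly on $S_T^c$, after the reparametrisation $\varphi_k \mapsto T(\varphi_k - \varphi_k^0)$ near the truth and globally away from it) to a limit functional that is uniquely maximised at the true parameter value — the asymptotic identifiability. Second, I would show the stochastic part is asymptotically negligible, uniformly in $\tau$: it suffices to bound $\mathit E\sup_{\varphi\in S_T^c}\big|\frac1T\int_0^T \varepsilon(t)e^{i\varphi t}\,dt\big|^2$ and use that $\varepsilon(t) = G(\xi(t)) = \sum_{k\ge m} \frac{C_k}{k!}H_k(\xi(t))$ has a summable-in-$t$ (in the appropriate averaged sense) covariance structure; the covariance of $\varepsilon$ is $\sum_{k\ge m}\frac{C_k^2}{k!}B^k(t)$, and since each $B_{\alpha_j,\varkappa_j}$ is bounded and $B^k$ decays, $\frac1{T^2}\int_0^T\int_0^T \mathit E[\varepsilon(s)\varepsilon(t)]e^{i\varphi(s-t)}\,ds\,dt \to 0$, with enough regularity in $\varphi$ (via a derivative bound or a chaining/entropy argument over the compact frequency range) to upgrade pointwise to uniform convergence.

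Combining these two steps, $U_T(\tau)$ converges in probability, uniformly over $\Theta^c$ in the Walker parametrisation, to a continuous limit $U(\tau)$ with a strict unique maximum at the true $\theta$; a standard argmax/Wald consistency argument then forces $\widehat A_{kT}\to A_k$, $\widehat B_{kT}\to B_k$ and $T(\widehat\varphi_{kT}-\varphi_k)\to 0$ in probability. I would keep careful track of the rescaling: because the frequency parameters live on sets $S_T$ whose mutual gaps shrink like $T^{-1/2}$ (cf.\ Remark~5) while $T\cdot(\text{gap})\to\infty$, the natural local coordinate near the truth is $T(\widehat\varphi_{kT}-\varphi_k)$, which is why that is the quantity that converges to $0$ rather than $\widehat\varphi_{kT}-\varphi_k$ itself.

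The main obstacle is the uniform (over the frequency parameter) control of the stochastic term $\frac1T\int_0^T \varepsilon(t)g(t,\tau)\,dt$. Pointwise in $\tau$ the bound is an easy second-moment computation using the Hermite expansion and the explicit covariance \eqref{cov1}, but to conclude $\sup_\tau$-negligibility I need either a modulus-of-continuity estimate for the random field $\varphi\mapsto \frac1T\int_0^T\varepsilon(t)e^{i\varphi t}\,dt$ on the compact $[\underline\varphi,\overline\varphi]$ (obtained by differentiating under the integral and bounding $\mathit E|\frac1T\int_0^T t\,\varepsilon(t)e^{i\varphi t}\,dt|^2$, which is still finite because $\frac{t^2}{T^2}$ is bounded on $[0,T]$) together with a covering argument, or a direct maximal-inequality approach; handling the $H_m$-rank term (the slowest-decaying chaos component, potentially long-range dependent when $\varkappa_0 = 0$ and $\alpha_0 < 1$) requires that even in that case $m\alpha_0 > 1$ is not needed here because the oscillatory factor $e^{i\varphi t}$ with $\varphi \ge \underline\varphi > 0$ (away from the noise singularities, since $\Xi_{noise}\cap\Xi_{regr}=\emptyset$) provides the extra averaging that kills the term.
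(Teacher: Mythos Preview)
Your overall strategy --- decompose $U_T(\tau)$ into a deterministic piece handled by trigonometric orthogonality and a stochastic piece shown to be uniformly negligible --- matches the paper's in spirit, but the execution of the stochastic step has a real gap, and your argmax argument for the $T$-rate on the frequencies glosses over a genuine difficulty.

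\medskip

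\textbf{Uniform control of the stochastic term.} You propose to get $\sup_\varphi\bigl|\tfrac1T\int_0^T\varepsilon(t)e^{i\varphi t}\,dt\bigr|\to 0$ in probability by combining a pointwise second-moment bound with a derivative bound or chaining. The pointwise bound is fine, but the upgrade is not: differentiating in $\varphi$ gives $\tfrac1T\int_0^T t\,\varepsilon(t)e^{i\varphi t}\,dt$, whose second moment is of order $T^{2-\alpha}$ when $\alpha=\min_j\alpha_j<1$ (change variables $t=Tu$, $s=Tv$), so the Lipschitz constant in $L^2$ blows up with $T$. With only second moments you cannot chain over a fixed compact interval against this growth. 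The paper's Lemma~2 bypasses the supremum altogether via the elementary identity
\[
\Bigl|\int_0^T e^{-i\lambda t}\varepsilon(t)\,dt\Bigr|^2
=2\int_0^T\cos(\lambda u)\int_0^{T-u}\varepsilon(t+u)\varepsilon(t)\,dt\,du,
\]
so that $\sup_\lambda|\cdot|^2\le 2\int_0^T\bigl|\int_0^{T-u}\varepsilon(t+u)\varepsilon(t)\,dt\bigr|\,du$, and then bounds the expectation of this $\lambda$-free quantity using the diagram formula for \emph{fourth} moments of $\varepsilon$ (this is where $EG^4(\xi(0))<\infty$ from \textbf{A2} is used). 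Your sketch never invokes fourth moments and so is missing the key ingredient. Your closing remark that the oscillatory factor $e^{i\varphi t}$ ``provides the extra averaging'' because $\Xi_{noise}\cap\Xi_{regr}=\emptyset$ is also off: Lemma~2 holds for the supremum over \emph{all} $\lambda\in\mathbb{R}$, including $\lambda$ at the noise singularities, and its proof uses no spectral separation.

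\medskip

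\textbf{The argmax step for the frequencies.} To conclude $T(\hat\varphi_{kT}-\varphi_k)\overset{P}{\to}0$ by a Wald-type argument you would need uniform convergence of $U_T$ in the local coordinate $u_k=T(\tau_k-\varphi_k)$, but as $T\to\infty$ this coordinate ranges over an unbounded set, and uniformity there is exactly what is hard. The paper avoids this by first solving the linear normal equations in $A_k,B_k$ at the estimated frequencies (getting $\hat A_{kT}=A_k z_{kT}-B_k y_{kT}+o_P(1)$ etc., with $z_{kT},y_{kT}$ explicit sinc-type functions of $T(\hat\varphi_{kT}-\varphi_k)$), then using $Q_T(\hat\theta_T)\le Q_T(\theta)$ to force the contrast $K_T(\hat\theta_T,\theta)=o_P(1)$, and finally exploiting the strict monotonicity of $\sin x/x$ near $0$ to deduce $T(\hat\varphi_{kT}-\varphi_k)\to 0$. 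This is a structural argument specific to trigonometric regression that sidesteps the need for uniform convergence over an unbounded local parameter set; your proposal does not contain a substitute for it.
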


The proof of the Theorem 1 is based on the diagram technique. Let us first
introduce the main elements involved in the definition of a diagram.
Specifically, a graph $\Gamma =\Gamma $($l_{1},\ldots ,l_{p})$ with $%
l_{1}+\cdots +l_{p}$ vertices is called a diagram of order ($l_{1},\ldots
,l_{p})$ if:

\textbf{a)} the set of vertices $V$ of the graph $\Gamma $ is of the form $%
V=\cup _{j=1}^{p}W_{j}$, where $W_{j}=\{(j,l),l=1,\ldots ,l_{j}\}$ is the $j$%
the level of the graph $\Gamma $, $1\leq j\leq p$ (if $l_{j}=0,$ assume that
$W_{j}=\emptyset $);

\textbf{b)} each vertex is of degree 1;

\textbf{c)} if $((j_{1},l_{1}),\ (j_{2},l_{2}))\in \Gamma $ then $j_{1}\neq
j_{2}$, that is, the edges of the graph $\Gamma $ may connect only different
levels.

Let $\mathit{L}=\mathit{L}(l_{1},\ldots ,l_{p})$ be a set of diagrams $%
\Gamma $ of order $(l_{1},\ldots ,l_{p})$. Denote by $R(\Gamma )$ the set of
edges of a graph $\Gamma \in L$. For the edge $\varpi =((j_{1},l_{1}),\
(j_{2},l_{2}))\in R(\Gamma )$, $\;j_{1}<j_{2}$, we set $d_{1}(\varpi )=j_{1}$%
,$\;d_{2}(\varpi )=j_{2}$. We call a diagram $\Gamma $ regular if its levels
can be split into pairs in such a manner that no edge connects the levels
belonging to different pairs. We denote by $L^{\ast }$ the set of regular
diagrams $L^{\ast }\subseteq L(l_{1},\ldots ,l_{p})$, If $p$ is odd, then $%
L^{\ast }=\emptyset .$ The following lemma provides the so called Diagram
Formula (see Lemma 3.2, or Doukhan, Oppenheim and Taqqu (2003), p.74 or
Pecatti and Taqqu (2010)).

\begin{lemma}
\label{lem2} Let $(\zeta _{1},\ldots ,\zeta _{p}),\ p\geq 2$, be a Gaussian
vector with $\mathit{E}\zeta _{j}=0,\;\mathit{E}\zeta _{j}^{2}=1,\;\mathit{E}%
\zeta _{i}\zeta _{j}=B(i,j),\;i,j=1,\ldots ,p,$ and let $H_{l_{1}}(u),\ldots
,H_{l_{p}}(u)$ be the Hermite polynomials. Then,
\begin{equation}
\mathit{E}\left\{ \prod_{j=1}^{p}H_{l_{j}}(\zeta _{j})\right\} =\sum_{\Gamma
\in \mathit{L}}I_{\Gamma },  \label{3.8}
\end{equation}%
\noindent where $I_{\Gamma }=\prod_{\varpi \in R(\Gamma )}B(d_{1}(\varpi
),d_{2}(\varpi )).$
\end{lemma}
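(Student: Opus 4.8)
\noindent The plan is to prove the identity (\ref{3.8}) by the generating function method for Hermite polynomials, comparing the two sides as (convergent) power series in auxiliary variables $t_{1},\dots ,t_{p}$. First I would recall the classical generating function identity
\[
\sum_{k=0}^{\infty }\frac{t^{k}}{k!}H_{k}(u)=\exp \Big(tu-\frac{t^{2}}{2}\Big),\qquad t,u\in \mathbb{R},
\]
so that, forming the product over the $p$ levels and taking expectations,
\[
\mathit{E}\Big[\prod_{j=1}^{p}\exp \Big(t_{j}\zeta _{j}-\frac{t_{j}^{2}}{2}\Big)\Big]=\exp \Big(-\frac{1}{2}\sum_{j=1}^{p}t_{j}^{2}\Big)\,\mathit{E}\Big[\exp \Big(\sum_{j=1}^{p}t_{j}\zeta _{j}\Big)\Big].
\]
Since $(\zeta _{1},\dots ,\zeta _{p})$ is Gaussian with $\mathit{E}\zeta _{j}=0$, $\mathit{E}\zeta _{j}^{2}=1$ and $\mathit{E}\zeta _{i}\zeta _{j}=B(i,j)$, the Laplace transform equals $\exp \big(\frac{1}{2}\sum_{i,j}t_{i}t_{j}B(i,j)\big)$, and the diagonal part $\frac{1}{2}\sum_{j}t_{j}^{2}B(j,j)=\frac{1}{2}\sum_{j}t_{j}^{2}$ cancels the prefactor, leaving
\[
\mathit{E}\Big[\prod_{j=1}^{p}\exp \Big(t_{j}\zeta _{j}-\frac{t_{j}^{2}}{2}\Big)\Big]=\exp \Big(\sum_{1\le i<j\le p}t_{i}t_{j}B(i,j)\Big).
\]
The interchange of expectation and series is legitimate for $t_{j}$ in a neighbourhood of the origin, since all Gaussian exponential moments are finite (alternatively, the Hermite series converge in $L^{2}(\Omega )$).

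\noindent Next I would extract the coefficient of $\prod_{j=1}^{p}t_{j}^{l_{j}}$ on both sides. On the left, by the first display this coefficient is $\prod_{j}H_{l_{j}}(\zeta _{j})/l_{j}!$, so after multiplying by $\prod_{j}l_{j}!$ it becomes $\mathit{E}\big[\prod_{j=1}^{p}H_{l_{j}}(\zeta _{j})\big]$. On the right, expanding
\[
\exp \Big(\sum_{1\le i<j\le p}t_{i}t_{j}B(i,j)\Big)=\prod_{1\le i<j\le p}\sum_{n_{ij}=0}^{\infty }\frac{\big(t_{i}t_{j}B(i,j)\big)^{n_{ij}}}{n_{ij}!},
\]
the coefficient of $\prod_{j}t_{j}^{l_{j}}$ is the sum, over nonnegative integer arrays $(n_{ij})_{i<j}$ with $\sum_{i\neq j}n_{ij}=l_{j}$ for every $j$ (convention $n_{ji}=n_{ij}$), of $\prod_{i<j}B(i,j)^{n_{ij}}/n_{ij}!$; multiplying by $\prod_{j}l_{j}!$ yields $\sum_{(n_{ij})}\big(\prod_{j}l_{j}!\big)\big(\prod_{i<j}n_{ij}!\big)^{-1}\prod_{i<j}B(i,j)^{n_{ij}}$.

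\noindent Finally comes the combinatorial identification. A diagram $\Gamma \in L(l_{1},\dots ,l_{p})$ is precisely a perfect matching of the labelled vertex set $\cup _{j}W_{j}$ in which no edge joins two vertices of the same level; such $\Gamma $ is encoded by (i) its edge-multiplicity profile $n_{ij}(\Gamma )=\#\{\text{edges between }W_{i}\text{ and }W_{j}\}$, (ii) for each level $j$ an ordered partition of its $l_{j}$ vertices into blocks of sizes $(n_{ji})_{i\neq j}$, and (iii) for each pair $i<j$ a bijection between the two selected blocks of size $n_{ij}$. Counting (ii) gives $\prod_{i}l_{i}!/\prod_{i<j}n_{ij}!^{2}$ and (iii) gives $\prod_{i<j}n_{ij}!$, so the number of diagrams with a fixed profile $(n_{ij})$ is $\big(\prod_{j}l_{j}!\big)\big(\prod_{i<j}n_{ij}!\big)^{-1}$, and for each such $\Gamma $ one has $I_{\Gamma }=\prod_{\varpi \in R(\Gamma )}B(d_{1}(\varpi ),d_{2}(\varpi ))=\prod_{i<j}B(i,j)^{n_{ij}}$. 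Hence the right-hand coefficient equals $\sum_{\Gamma \in L}I_{\Gamma }$, and equating it with the left-hand coefficient gives (\ref{3.8}). The one place where care is genuinely needed — and which I expect to be the main obstacle — is exactly this last bookkeeping: arranging that the multinomial factors from partitioning the levels and the $n_{ij}!$ matching factors cancel against the $1/n_{ij}!$ coming from the exponential series, so that the raw count of labelled diagrams emerges with coefficient one.
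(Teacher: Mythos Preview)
Your proof is correct and complete; the generating-function argument and the combinatorial identification of the multinomial/matching factors with the number of labelled diagrams of a given edge-profile are both carried out accurately. The bookkeeping you flagged as the main obstacle is in fact handled correctly: the $\prod_{j}l_{j}!/\prod_{i<j}(n_{ij}!)^{2}$ from distributing vertices of each level among the other levels, times the $\prod_{i<j}n_{ij}!$ bijections, gives exactly $\prod_{j}l_{j}!/\prod_{i<j}n_{ij}!$, matching the coefficient from the exponential series after multiplication by $\prod_{j}l_{j}!$.

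As for comparison with the paper: the paper does not give its own proof of this lemma at all. It simply states the Diagram Formula and refers the reader to Doukhan, Oppenheim and Taqqu (2003), p.~74, and to Peccati and Taqqu (2010). Your generating-function proof is one of the standard derivations found in those references (the other common route is via Wick's theorem for moments of Gaussian vectors, expanding each $H_{l_{j}}(\zeta_{j})$ as a Wick power $:\zeta_{j}^{l_{j}}:$ and applying Isserlis' formula directly). So you have supplied a self-contained argument where the paper chose to cite.
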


As the special case for $p=2$ we have the following%
\begin{equation}
\mathit{E}H_{k}(\zeta (t))H_{l}(\zeta (s))=k!\delta _{k}^{l}B^{k}(t-s),
\label{3.9}
\end{equation}%
where $\delta _{k}^{l}$ is the Kronecker delta.

\begin{lemma}
\label{lem22} Suppose conditions \textbf{A1} and \textbf{A2} are fulfilled.
Then
\begin{equation*}
\lim_{T\rightarrow \infty }\mathit{E}\eta ^{2}(T)=0,
\end{equation*}%
where $\eta (T)=\sup_{\lambda \in \mathbb{R}}\frac{1}{T}\left\vert
\int_{0}^{T}e^{-i\lambda t}\varepsilon (t)dt\right\vert .$
\end{lemma}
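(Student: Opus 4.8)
The plan is to estimate the quantity $\mathit{E}\eta^2(T)$ by trading the supremum over $\lambda\in\mathbb{R}$ for an integral plus a modulus-of-continuity argument. First I would fix $T$, set $I_T(\lambda)=\frac1T\int_0^T e^{-i\lambda t}\varepsilon(t)\,dt$, and note that $\lambda\mapsto I_T(\lambda)$ is (a.s.) a smooth function of $\lambda$ whose derivative is $\frac1T\int_0^T(-it)e^{-i\lambda t}\varepsilon(t)\,dt$, which is bounded in modulus by $\frac1T\int_0^T t|\varepsilon(t)|\,dt\le \sup_{0\le t\le T}|\varepsilon(t)|$-type bounds — more usefully, $\mathit{E}\,|I_T'(\lambda)|^2\le \frac1{T^2}\int_0^T\int_0^T ts\,|\mathit{E}[\varepsilon(t)\varepsilon(s)]|\,dt\,ds$. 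Using (\ref{3.9}) for the covariance of $\varepsilon$, namely $\mathit{E}[\varepsilon(t)\varepsilon(s)]=\sum_{k\ge m}\frac{C_k^2}{k!}B^k(t-s)$, one gets a bound on $\mathit{E}\sup_\lambda|I_T(\lambda)|^2$ of the form $\mathit{E}|I_T(0)|^2$-type terms plus a cross term controlled by $\mathit{E}|I_T'(\lambda)|^2$; alternatively, the standard trick is to write, for any fixed $R$, $\sup_{|\lambda|\le R}|I_T(\lambda)|^2$ using a Sobolev/Garsia–Rodemich–Rumsey-type embedding on the interval $[-R,R]$.

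A cleaner route, and the one I would actually pursue, is the following two-step estimate. Step one: bound $\mathit{E}|I_T(\lambda)|^2$ uniformly in $\lambda$. We have
\begin{equation*}
\mathit{E}|I_T(\lambda)|^2=\frac1{T^2}\int_0^T\int_0^T e^{-i\lambda(t-s)}\,\mathit{E}[\varepsilon(t)\varepsilon(s)]\,dt\,ds
=\frac1{T^2}\int_0^T\int_0^T e^{-i\lambda(t-s)}\sum_{k\ge m}\frac{C_k^2}{k!}B^k(t-s)\,dt\,ds.
\end{equation*}
Changing to the variable $u=t-s$ and using $|B(u)|\le \sum_j D_j (1+u^2)^{-\alpha_j/2}$, together with $|B(u)|\le 1$, one sees $|B(u)|^k\le |B(u)|^m$ for $k\ge m$, so $\sum_{k\ge m}\frac{C_k^2}{k!}|B(u)|^k\le \|G\|_2^2\,|B(u)|^m$; hence
\begin{equation*}
\sup_{\lambda\in\mathbb{R}}\mathit{E}|I_T(\lambda)|^2\le \frac{\|G\|_2^2}{T}\int_{-T}^{T}|B(u)|^m\,du\cdot\frac1T\cdot T = \|G\|_2^2\,\frac1T\int_{-T}^{T}|B(u)|^m\,du,
\end{equation*}
and this tends to $0$ as $T\to\infty$ provided $\frac1T\int_{-T}^T|B(u)|^m\,du\to0$, which follows because $|B(u)|^m\to0$ as $|u|\to\infty$ (each $B_{\alpha_j,\varkappa_j}(u)\to0$), by a Cesàro/dominated-convergence argument. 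Step two: control the oscillation in $\lambda$. Since the supremum is over all of $\mathbb{R}$ I would first observe that $|I_T(\lambda)|$ is, for each $\omega$, the modulus of the Fourier transform of a compactly supported $L^1$ (indeed $L^2$) function and hence $\to0$ as $|\lambda|\to\infty$, so the sup is attained on a bounded set whose size can be controlled; then on a bounded $\lambda$-interval I would use a chaining/discretization argument: partition $[-R,R]$ into $\sim RT$ points spaced $1/T$ apart, bound $\mathit{E}\max$ over the grid by $(\text{grid size})\times \sup_\lambda\mathit{E}|I_T(\lambda)|^2$, and bound the in-between oscillation by $\frac1T\cdot$ (a grid spacing) $\times \sup_\lambda \mathit{E}|I_T'(\lambda)|^2$ with $\mathit{E}|I_T'(\lambda)|^2\le \|G\|_2^2\frac1{T^2}\int_{-T}^T u^2|B(u)|^m\,du$, which after dividing by the extra $T^2$ from the derivative scaling stays bounded; careful bookkeeping of powers of $T$ makes the whole thing vanish.

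The main obstacle is getting the uniform-in-$\lambda$ control right: a naive union bound over an interval $[-R,R]$ with $R$ fixed loses the point that the sup is over $\mathbb{R}$, and one must argue that the relevant $R=R(T)$ can be taken to grow slowly enough (or that $I_T(\lambda)$ decays fast enough in $\lambda$ for fixed $\omega$) that the discretization cost does not overwhelm the gain $\frac1T\int_{-T}^T|B(u)|^m\,du\to0$. I would handle this by combining the pointwise decay $\lim_{|\lambda|\to\infty}I_T(\lambda,\omega)=0$ (Riemann–Lebesgue, for each fixed path) with a second-moment bound on $\sup_{|\lambda|\ge R}|I_T(\lambda)|$ obtained by integrating by parts in $t$ once — the boundary terms contribute $\frac1{T|\lambda|}|\varepsilon(T)e^{-i\lambda T}-\varepsilon(0)|$ and the remaining integral gains a factor $1/|\lambda|$ — so that for $|\lambda|\ge R$ the contribution to $\mathit{E}\eta^2(T)$ is $O(1/(T R^2))$ times second moments of $\varepsilon$, which are finite by \textbf{A2}. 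Then on $|\lambda|\le R$ the grid argument with spacing $1/T$ gives a bound of order $R\cdot\sup_\lambda\mathit{E}|I_T(\lambda)|^2 + (\text{oscillation term})$, and choosing $R=R(T)\to\infty$ slowly (e.g. $R(T)=o\big((\frac1T\int_{-T}^T|B(u)|^m du)^{-1/2}\big)$ while $R(T)^2 T\to\infty$) makes every piece tend to zero. I would present the computation with these estimates, invoking Lemma~\ref{lem2} (specifically (\ref{3.9})) for the covariance structure of $\varepsilon$ and condition \textbf{A1} for the decay of $B$.
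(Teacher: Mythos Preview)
Your proposal has two genuine gaps that prevent it from going through as written.

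First, the integration-by-parts step for $|\lambda|\ge R$ is not available: nothing in \textbf{A1}--\textbf{A2} gives path-wise differentiability (or even bounded variation) of $t\mapsto\varepsilon(t)=G(\xi(t))$, so you cannot transfer a derivative from $e^{-i\lambda t}$ onto $\varepsilon$ to gain the factor $1/|\lambda|$. Mean-square continuity of $\xi$ is all that is assumed.

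Second, the chaining bookkeeping does not close. With spacing $1/T$ on $[-R,R]$ the grid has $\sim RT$ points, and the crude bound $\mathit{E}\max_i|I_T(\lambda_i)|^2\le(\text{grid size})\cdot\sup_\lambda\mathit{E}|I_T(\lambda)|^2$ yields at best $RT\cdot O(1/T)=O(R)$, which cannot vanish while $R\to\infty$. Your displayed ``$R\cdot\sup_\lambda\mathit{E}|I_T(\lambda)|^2$'' seems to have dropped the factor $T$ coming from the number of grid points. A sharper maximal inequality would need sub-Gaussian tails for $I_T(\lambda)$, which are not available for general nonlinear $G$.

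The paper sidesteps the supremum entirely by a different, elementary observation: writing
\[
\Bigl|\int_0^T e^{-i\lambda t}\varepsilon(t)\,dt\Bigr|^2
=2\int_0^T\cos(\lambda u)\int_0^{T-u}\varepsilon(t+u)\varepsilon(t)\,dt\,du
\]
and bounding $|\cos(\lambda u)|\le1$ gives a $\lambda$-free upper bound, so
\[
\mathit{E}\eta^2(T)\le\frac{2}{T^2}\int_0^T\mathit{E}\Bigl|\int_0^{T-u}\varepsilon(t+u)\varepsilon(t)\,dt\Bigr|\,du.
\]
The price is that the right-hand side involves \emph{fourth} moments of $\varepsilon$ (this is exactly why \textbf{A2} demands $\mathit{E}G^4(\xi(0))<\infty$). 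The paper then truncates the Hermite expansion $G=G_1+G_2$, disposes of the tail pieces by Cauchy--Schwarz, and for the finite piece $G_1$ evaluates $\mathit{E}[G_1(\xi(t+u))G_1(\xi(s+u))G_1(\xi(t))G_1(\xi(s))]$ via the diagram formula (Lemma~\ref{lem2}) with $p=4$, estimating the resulting double integrals in $(t,s)$ against powers of $B_0(t)=(1+t^2)^{-\alpha/2}$. Your second-moment route never sees these fourth-moment quantities, which is why it looked simpler---but the simplification is illusory because the sup over $\lambda$ cannot be controlled that way.
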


\begin{proof}
Some ideas from Ivanov (2010) are used in the development of the proof of this lemma.   First, from
\begin{equation*}
\left\vert \int_{0}^{T}e^{-i\lambda t}\varepsilon (t)dt\right\vert
^{2}=\int_{-T}^{T}e^{-i\lambda u}\int_{0}^{T-\left\vert u\right\vert
}\varepsilon (t+\left\vert u\right\vert )\varepsilon
(t)dtdu=2\int_{0}^{T}\cos \lambda u\int_{0}^{T-u}\varepsilon
(t+u)\varepsilon (t)dtdu,
\end{equation*}%
we obtain%
\begin{eqnarray}
& & E\eta ^{2}(T)\leq \frac{2}{T^{2}}\int_{0}^{T}E\left\vert \int_{0}^{T-u}\varepsilon (t+u)\varepsilon
(t)dt\right\vert du
\nonumber\\
&& \hspace*{1cm}\leq \sum_{i,j=1}^{2}\frac{2}{T^{2}}\int_{0}^{T}E\left\vert \int_{0}^{T-u}G_{i}(\xi (t+u))\ G_{j}(\xi
(t))dt\right\vert du=\sum_{i,j=1}^{2}I_{i,j}(T) \label{3.10}
\end{eqnarray}
\noindent with
 $G(x)=G_{1}(x)+G_{2}(x),$ where
$G_{1}(x)=\sum_{k=1}^{M}\frac{C_{k}}{k!}H_{k}(x),$ and
$G_{2}(x)=\sum_{k=M+1}^{\infty }\frac{C_{k}}{k!}H_{k}(x).$

 Let us now compute upper bounds for $I_{i,j}(T),$ $i,j=1,2.$ In particular, from Cauchy-Schwarz inequality:%
\begin{equation*}
I_{12}(T)\leq \left( \frac{2}{T^{2}}\int_{0}^{T}(T-u)du\right) \left( EG_{1}^{2}(\xi (0)\right) ^{1/2}\left(
EG_{2}^{2}(\xi (0)\right) ^{1/2}<\left( EG_{1}^{2}(\xi (0)\right) ^{1/2}\epsilon ,
\end{equation*}%
\noindent since from (\ref{3.1}), $EG_{2}^{2}(\xi (0))=\sum_{k=M+1}^{\infty }\frac{C_{k}^{2}}{k!}$ is the tail of a
convergent series, and similarly, $I_{21}(T)<\left( EG_{1}^{2}(\xi (0)\right) ^{1/2}\epsilon ,$ and $I_{22}(T)<\epsilon
^{2},$  for any $\epsilon >0.$

Note that
\begin{equation}
I_{11}(T)=\frac{2}{T^{2}}\int_{0}^{T}E\left\vert \int_{0}^{T-u}G_{1}(\xi (t+u))G_{1}(\xi (t))dt\right\vert du\leq
\frac{2}{T^{2}}\int_{0}^{T}\psi ^{1/2}(u)du,  \label{3.11}
\end{equation}
\noindent where
\begin{eqnarray}
\psi (u) &=&\int_{0}^{T-u}\int_{0}^{T-u}EG_{1}(\xi (t+u))G_{1}(\xi
(s+u))G_{1}(\xi (t))G_{1}(\xi (s))dtds \nonumber  \\
&=&\sum_{l_{1},l_{2},l_{3},l_{4}=1}^{M}\left( \prod\limits_{j=1}^{4}\frac{%
C_{l_{j}}}{l_{j}!}\right) \int_{0}^{T-u}\int_{0}^{T-u}E\left( \prod\limits_{j=1}^{4}H_{l_{j}}\left( \xi _{j}\right)
\right) dtds,   \label{3.12}
\end{eqnarray}%
\noindent with $\left( \xi _{1},\xi _{2},\xi _{3},\xi _{4}\right)
=\left( \xi (t+u),\xi (s+u),\xi (t),\xi (s)\right).$

Applying diagram formula (\ref{3.8}) with $p=4,$ for $\Gamma \in L^{\ast }(l_{1},l_{2},l_{3},l_{4}),$ we have different
splitting of the levels (1,2,3,4) into pairs:

$ (i)\;\;\;\;  (1,2)\;\;\; (3,4) \quad (ii)\;\;\; (1,3)\;\;\;  (2,4)
\quad (iii)\;\;  (1,4)\;\;\;  (2,3). $

Let us denote the cardinality of the levels of the first pairs in (i), (ii) and (iii) as $r(1),$ and the cardinality of
the levels of the second pairs in (i), (ii) and (iii) as $r(2);$ $r(1)$ and $r(2)$ are the orders of Hermite
polynomials in the left-hand side of (\ref{3.8}).

For the product%
\begin{equation}
\prod\limits_{\varpi \in R (\Gamma )}B(d_{1}(\varpi ), d_{2}(\varpi
)) \label{3.14}
\end{equation}%
\noindent in (\ref{3.8}), we obtain the following estimates: In case (i), product  (\ref{3.14}) is bounded by
$B^{r(1)+r(2)}(t-s),$ while, for the cases (ii) and (iii), the expression (\ref{3.14}) is bounded by
$B^{r(1)+r(2)}(u),$ and $B^{r(1)}(t-s+u)B^{r(2)}(t-s-u),$ respectively.

From (\ref{3.8}), (\ref{3.11}), (\ref{3.12}) and (\ref{3.14}), we have
\begin{equation}
I_{11}(T)\leq 2\sum_{l_{1},l_{2},l_{3},l_{4}=1}^{M}\left(
\prod\limits_{j=1}^{4}\frac{C_{l_{j}}}{l_{j}!}\right) \sum_{\Gamma \in L}%
\frac{1}{T^{2}}\int_{0}^{T}\left[ \int_{0}^{T-u}\int_{0}^{T-u}\prod \limits_{\varpi \in R (\Gamma
)}\left|B(d_{1}(\varpi ), d_{2}(\varpi ))\right|dtds\right] ^{1/2}du. \label{3.15}
\end{equation}

Since $r(1),r(2)\geq 1,$ we need to estimate for the regular diagram in (\ref{3.15}) the following integrals:
\begin{equation*}
\frac{1}{T^{2}}\int_{0}^{T}\gamma _{i}^{1/2}(u)du,\ i=1,2,3,
\end{equation*}%
\noindent where
\begin{eqnarray*}
\gamma _{1}(u) &=&\int_{0}^{T-u}\int_{0}^{T-u}B^{2}(t-s)dtds, \quad
\gamma _{2}(u)= (T-u)^{2}B^{2}(u), \\
\gamma _{3}(u) &=&\int_{0}^{T-u}\int_{0}^{T-u}|B(t-s+u)B(t-s-u)|dtds.
\end{eqnarray*}

Note that
\begin{equation*}
\left\vert B(t)\right\vert =\left\vert \sum_{j=0}^{\kappa
}D_{j}B_{\alpha
_{j},\varkappa _{j}}(t)\right\vert \leq (1+t^{2})^{-\alpha %
/2}=B_{0}(t).
\end{equation*}%
Consider first the case  $\alpha =\min (\alpha _{0},\ldots ,\alpha
_{r})\in (0,1).$ Introduce the function $L(t)=\left[
\frac{t^{2}}{1+t^{2}}\right] ^{\alpha /2}$ being  monotonically
nondecreasing slowly varying at infinity function (s.v.f), see
Seneta (1976). Then, $B_{0}(t)=\frac{L(t)}{|t|^{\alpha }}.$

Denote by $b_{u}^{0}(t-s)=B_{0}(t-s-u)B_{0}(t-s+u),$ with $$B_{0}(t)= \frac{L(t)}{\left\vert t\right\vert ^{\alpha
}}.$$

 Using the change
of variables $\left( t\mapsto \frac{t}{T},s\rightarrow
\frac{s}{T},t-s\rightarrow t\right) ,$ we obtain
\begin{eqnarray*}
\gamma _{3}(u) &\leq & T^{2}\int_{0}^{1-\frac{u}{T}}\int_{0}^{1-\frac{u}{T}%
}|b_{u}^{0}(T(t-s))|dtds=T^{2}\left( 1-\frac{u}{T}\right) \int_{-1+\frac{u}{T}%
}^{1+\frac{u}{T}}\left( 1-\frac{\left\vert t\right\vert }{1-\frac{u}{T}}%
\right) |b_{u}^{0}(Tt)|dt \\
&\leq &T^{2}\left( 1-\frac{u}{T}\right) \int_{-1}^{1}\left|b_{u}^{0}(Tt)\right|dt\leq
T^{2}\left( 1-\frac{u}{T}\right) \left[ \int_{0}^{1}|B_{0}(Tt+u)|dt+%
\int_{-1}^{0}|B_{0}(Tt-u)|dt\right] .
\end{eqnarray*}

Since
\begin{equation*}
\int_{-1}^{0}B_{0}(Tt-u)dt=\int_{0}^{1}B_{0}(Tt+u)dt,
\end{equation*}%
\noindent we rewrite
\begin{equation}
\gamma _{3}(u)\leq 2T^{2}\left( 1-\frac{u}{T}\right) \int_{0}^{1}|B_{0}(Tt+u)|dt. \label{3.16}
\end{equation}

From \textbf{A1} and the monotonicity of the function $L(t),$  for
any $\epsilon >0,$ and, for $T$ sufficiently large, the following
inequalities hold:
\begin{equation*}
B_{0}(Tt+u)=\frac{L(Tt+u)}{\left\vert Tt+u\right\vert ^{\alpha }}%
\leq \frac{L(2T)}{(Tt)^{\alpha }}<\frac{1+\epsilon }{t^{\alpha }}\frac{L(T)}{T^{\alpha }}=\frac{%
1+\epsilon }{t^{\alpha }}B_{0}(T),
\end{equation*}%
\noindent and, hence,
\begin{equation}
\frac{1}{T^{2}}\int_{0}^{T}\gamma _{3}^{1/2}(u)du\leq \frac{2\sqrt{2}}{3}%
\left( \frac{1+\epsilon }{1-\alpha }\right) ^{1/2}B_{0}^{1/2}(T).  \label{3.17}
\end{equation}%
Similarly, we have $\gamma _{1}(u)\leq 2T^{2}\left( 1-\frac{u}{T}\right) \int_{0}^{1}B_{0}^{2}(Tt)dt\leq 2T^{2}\left(
1-\frac{u}{T}\right) \int_{0}^{1}B_{0}(Tt)dt.$
Thus,%
\begin{equation}
\frac{1}{T^{2}}\int_{0}^{T}\gamma _{1}^{1/2}(u)du\leq \frac{2\sqrt{2}}{3\sqrt{1-\alpha }}%
B_{0}^{1/2}(T). \label{3.18}
\end{equation}

On the other hand,
\begin{eqnarray}
\frac{1}{T^{2}}\int_{0}^{T}\gamma _{2}^{1/2}(u)du &\leq &\frac{1}{T^{2}}%
\int_{0}^{T}(T-u)|B(u)|du  \label{3.20bb} \\
&\leq &\int_{0}^{1}(1-u)B_{0}(Tu)du\leq \frac{1}{\left( 1-\alpha \right) \left( 2-\alpha \right) }B_{0}(T). \nonumber
\end{eqnarray}%
\noindent Thus, all the terms in (\ref{3.12}), corresponding to the regular diagrams, tend to zero as $T\rightarrow
\infty .$

Let us now consider the non-regular diagrams in (\ref{3.12}). Fix $\Gamma \in L\backslash L^{\ast }.$
In the product%
\begin{equation}
\prod\limits_{\varpi \in R(\Gamma )}B(d_{1}(\varpi ),d_{2}(\varpi )), \label{3.21}
\end{equation}%
there is a multiplier $B(t-s)$ (which means that an edge between
levels 1 and 2 or 2 and 4 exists), or there is a multiplier $B(u)$
(which means that an edge between the levels 1 and 3 or 2 and 4
exists). If the diagram $\Gamma $ has no edges with such properties,
then, level 1 will be connected to level 4, and level 2 will be
connected to level 3, which is true for regular diagram only. Thus,
expression  (\ref{3.15}) is given in terms of either $B(t-s)$ or
$B(u),$ and similarly to (\ref{3.18}) and (\ref{3.20bb}) one can
obtain
\begin{equation}
\frac{1}{T^{2}}\int_{0}^{T}\left[ \int_{0}^{T-u}\int_{0}^{T-u}|B(t-s)|dtds%
\right] ^{1/2}du\leq \frac{2\sqrt{2}}{3(1-\alpha )}B_{0}^{1/2}(T),  \label{3.22}
\end{equation}%
\noindent and
\begin{equation}
\frac{1}{T^{2}}\int_{0}^{T}(T-u)|B(u)|^{1/2}du\leq \frac{4}{\left( 2-%
\alpha \right) \left( 4-\alpha \right) }B_{0}^{1/2}(T).  \label{3.23}
\end{equation}

From (\ref{3.17})-(\ref{3.23}), expression (\ref{3.15}) tends to zero when $T\longrightarrow \infty ,$ and hence, the
statement of Lemma 2 follows, for $\alpha <1.$ The case $\alpha >1$ is almost obvious because of the integrability of
the function $B_{0}(t).$ If $\alpha =1,$ integrals of $B_{0}(t)$ are of logarithmic order in $T$ and the statement of
Lemma \ref{lem22} is also true.
\end{proof}

\medskip

The proof of Theorem 1 is now derived.

\begin{proof}

Denote
$$z_{k_{T}}=\frac{\sin [T(\hat{\varphi}_{k_{T}}-\varphi _{k})]}{T(\hat{\varphi}%
_{k_{T}}-\varphi _{k})},\quad y_{k_{T}}=\frac{1-\cos [T(\hat{\varphi}%
_{k_{T}}-\varphi _{k})]}{T(\hat{\varphi}_{k_{T}}-\varphi _{k})}.$$

We shall show that for $k=1,2,\ldots ,N,$%
\begin{eqnarray}
\hat{A}_{k_{T}} &=& A_{k}z_{k_{T}}-B_{k}y_{k_{T}}+o_{P}(1),  \label{3.24} \\
\hat{B}_{k_{T}} &=&A_{k}y_{k_{T}}+B_{k}z_{k_{T}}+o_{P}(1),
\nonumber
\end{eqnarray}%
where $o_{P}(1)$ means  (different) stochastic processes  tending to zero in probability as $T\longrightarrow \infty.$

Taking derivatives of the functional $Q_{T}(\theta )$ with respect to $%
A_{k},B_{k},k=1,2,\ldots ,N,$ we obtain the following system of
linear equations in terms of the  LSE $\hat{A}_{k_{T}},\hat{B}_{k_{T}},k=1,2,\ldots ,N:$%
\begin{equation}
\left\{
\begin{array}{c}
\sum_{k=1}^{N}a_{kj}^{(1)}(T)\hat{A}_{k_{T}}+\sum_{k=1}^{N}b_{kj}^{(1)}(T)%
\hat{B}_{k_{T}}=c_{j}^{(1)}(T), \\
\sum_{k=1}^{N}a_{kj}^{(2)}(T)\hat{A}_{k_{T}}+\sum_{k=1}^{N}b_{kj}^{(2)}(T)%
\hat{B}_{k_{T}}=c_{j}^{(2)}(T)%
\end{array}%
\right. ,j=1,\ldots ,N,  \label{3.25}
\end{equation}%
where, denoting $$<u(t),v(t)>=\frac{1}{T}\int_{0}^{T}u(t)v(t)dt,$$
\noindent for $k,j = 1,\ldots ,N,$
\begin{eqnarray*}
&&%
\begin{array}{cc}
a_{kj}^{(1)}(T)=\left\langle \cos (\hat{\varphi}_{k_{T}}t),\cos (\hat{\varphi}_{j_{T}}t)\right\rangle, \; &
a_{kj}^{(2)}(T)=\left\langle \cos (\hat{\varphi}_{k_{T}}t),\sin (\hat{\varphi}_{j_{T}}t)\right\rangle, \\
b_{kj}^{(1)}(T)=\left\langle \sin (\hat{\varphi}_{k_{T}}t),\cos (\hat{\varphi}%
_{j_{T}}t)\right\rangle, \; & b_{kj}^{(2)}(T)=\left\langle \sin (\hat{\varphi}%
_{k_{T}}t),\sin (\hat{\varphi}_{j_{T}}t)\right\rangle, \\
c_{j}^{(1)}(T)=\left\langle x(t),\cos (\hat{\varphi}_{j_{T}}t)\right\rangle,
\;\;\;\;\;\; & c_{j}^{(2)}(T)=\left\langle x(t),\sin (\hat{\varphi}%
_{j_{T}}t)\right\rangle .\;\;\;\;\;\;%
\end{array}
\end{eqnarray*}

From  (\ref{3.6}),  we have the following, for $k,j = 1,\ldots ,N,$
\begin{equation}
a_{kj}^{(1)}(T)=a_{kj}^{(2)}(T)=o(1),\quad  k\neq j,\quad a_{kk}^{(1)}(T)=\frac{1}{2}%
+o(1),  \label{3.26}
\end{equation}

\begin{eqnarray}
b_{kj}^{(1)}(T) &=& b_{kj}^{(2)}(T)=o(1),\quad k\neq j,\quad
b_{kk}^{(2)}(T)=\frac{1}{2}+o(1),  \label{3.27} \\
 \notag
\end{eqnarray}%
where  $o(1)$ means (different) stochastic processes  tending to
zero almost surely,   as $T\longrightarrow \infty .$

Then, one can continue as follows:
\begin{equation*}
c_{j}^{(1)}(T)=\left\langle \varepsilon (t),\cos (\widehat{\varphi }_{j_{T}}t)\right\rangle +\left\langle g(t,\theta
),\cos (\widehat{\varphi } _{j_{T}}t)\right\rangle= d_{j}^{(1)}(T)+d_{j}^{(2)}(T)
\end{equation*}%
\noindent where $d_{j}^{(1)}(T)=o_{P}(1)$ by Lemma \ref{lem22}, and
\begin{eqnarray*}
d_{j}^{(2)}(T) &=& A_{j}^{0}\left\langle \cos (\varphi _{j}t),\cos
(\hat{\varphi}_{j_{T}}t)\right\rangle +B_{j}^{0}\left\langle
\sin(\varphi_{j}t),\cos (\hat{\varphi}_{j_{T}}t)\right\rangle
+o_{P}(1),\nonumber\\
&=& \frac{1}{2}\left[ A_{j}z_{jT}-B_{j}y_{jT}\right]+o_{P}(1),
\end{eqnarray*}
\noindent or
\begin{equation}
c_{j}^{(1)}(T)=\frac{1}{2}\left[
A_{j}z_{j_{T}}-B_{j}y_{j_{T}}\right] +o_{P}(1),\ j=1,\ldots ,N,
\label{3.28}
\end{equation}

\noindent and similarly
\begin{equation}
c_{j}^{(2)}(T)=\frac{1}{2}\left[ A_{j}y_{j_{T}}+B_{j}z_{j_{T}}\right] +o_{P}(1),\ j=1,\ldots ,N, \label{3.29}
\end{equation}
\noindent where $o_{P}(1)$ are processes tending to zero in probability as $T\rightarrow \infty.$

Since $\left\vert z_{j_{T}}\right\vert ,\left\vert
y_{j_{T}}\right\vert \leq 1,$ we obtain, from (\ref{3.24}),
\begin{equation}
\left\vert \hat{A}_{k_{T}}\right\vert \left\vert
\hat{B}_{k_{T}}\right\vert \leq \left\vert A_{k}\right\vert
+\left\vert B_{k}\right\vert +o_{P}(1),\quad k=1,\ldots ,N.
\label{3.30}
\end{equation}

Let $\Delta g(t;\theta _{1},\theta _{2})=g(t,\theta _{1})-g(t,\theta
_{2}),$ and $K_{T}(\theta _{1},\theta _{2})=\left\langle \Delta
g(t;\theta _{1},\theta _{2}),\Delta g(t;\theta _{1},\theta
_{2})\right\rangle.$

By definition of LSE
\begin{equation}
Q_{T}(\hat{\theta}_{T})\leq Q_{T}(\theta ).  \label{3.31}
\end{equation}%
On the other hand,
\begin{equation}
Q_{T}(\hat{\theta}_{T})-Q_{T}(\theta _{0})=K_{T}(\hat{\theta}_{T},\theta
_{0})+2\left\langle \varepsilon (t),\Delta g(t;\theta _{1},\hat{\theta}%
_{T})\right\rangle ,  \label{3.32}
\end{equation}%
\noindent and by Lemma 2 and (\ref{3.28})-(\ref{3.30}), we have
\begin{equation}
\left\langle \varepsilon (t),\Delta g(t;\theta
_{1},\hat{\theta}_{T})\right\rangle =o_{P}(1).  \label{3.33}
\end{equation}

From (\ref{3.31}), (\ref{3.32}) and (\ref{3.33}), it follows  that
\begin{equation}
K_{T}(\hat{\theta}_{T},\theta )=o_{P}(1).  \label{3.34}
\end{equation}
\noindent
Consider $g_{k_{T}}(t)=\hat{A}_{k_{T}}\cos (\hat{\varphi}_{k_{T}}t)+\hat{B}_{k_{T}}\sin (%
\hat{\varphi}_{j_{T}}t)-A_{k}\cos (\varphi _{k}t)-B_{k}\sin (\varphi _{k}t).$ Observe that
\begin{equation}
K_{T}(\hat{\theta}_{T},\theta )=\sum_{k=1}^{N}\left\langle
g_{k_{T}}(t),g_{k_{T}}(t)\right\rangle +2\sum_{k<j}^{N}\left\langle
g_{k_{T}}(t),g_{j_{T}}(t)\right\rangle ,\quad k\neq j.
\label{3.31b}
\end{equation}
In a similar way as before, for $k =1,\ldots ,N,$
\begin{eqnarray*}
&&\left\langle g_{k_{T}}(t),g_{j_{T}}(t)\right\rangle =o_{P}(1),\quad k\neq j,\\
&&\left\langle g_{k_{T}}(t),g_{k_{T}}(t)\right\rangle =\frac{1}{2}\left[ \hat{A}_{k_{T}}^{2}+\hat{B}_{k_{T}}^{2}+(A_{k})^{2}+(B_{k})^{2}\right]\\
&&  \hspace*{2.5cm}-(\hat{A}_{k_{T}}A_{k}+\hat{B}%
_{k_{T}}B_{k})z_{k_{T}}+(\hat{A}_{k_{T}}B_{k}+\hat{B}%
_{k_{T}}A_{k})y_{k_{T}}+o_{P}(1).
\end{eqnarray*}

From (\ref{3.24})-(\ref{3.32}), we get
\begin{eqnarray}
K_{T}(\hat{\theta}_{T},\theta ) &=&\frac{1}{2}\sum_{k=1}^{N}\left( (A_{k})^{2}+(B_{k})^{2}\right) \left(
1-z_{k_{T}}^{2}-y_{k_{T}}^{2}\right)
+o_{P}(1)  \label{3.33b} \\
&=&\frac{1}{2}\sum_{k=1}^{N}\left( (A_{k})^{2}+(B_{k})^{2}\right)
\left( 1-\left( \frac{\sin \left(
\frac{1}{2}T(\hat{\varphi}_{k_{T}}-\varphi _{k})\right)
}{\frac{1}{2}T(\hat{\varphi}_{k_{T}}-\varphi _{k})}\right)
^{2}\right) +o_{P}(1).  \nonumber
\end{eqnarray}

Additionally, from (\ref{3.34}), we have
\begin{equation}
1-\left( \frac{\sin \left(
\frac{1}{2}T(\hat{\varphi}_{k_{T}}-\varphi _{k})\right)
}{\frac{1}{2}T(\hat{\varphi}_{k_{T}}-\varphi _{k})}\right)
=o_{P}(1),\quad  k=1,\ldots ,N.  \label{3.34b}
\end{equation}

Since $\frac{\sin x}{x},\ x\geq 0,$ is decreasing around zero, then, for $\epsilon \in (0,1),$ thanks to (\ref{3.34b}),

\begin{equation*}
P\left\{ \frac{1}{2}T\left|\hat{\varphi}_{k_{T}}-\varphi
\right|>\epsilon \right\} \leq P\left\{ \left( 1-\frac{\sin \left(
\frac{1}{2}T(\hat{\varphi}_{k_{T}}-\varphi )\right)
}{\frac{1}{2}T(\hat{\varphi}_{k_{T}}-\varphi )}\right) >1-\frac{\sin
\epsilon }{\epsilon }\right\} \longrightarrow 0,\quad
T\longrightarrow \infty ,
\end{equation*}%
\noindent or
\begin{equation}
T(\hat{\varphi}_{k_{T}}-\varphi )=o_{P}(1),\quad T\longrightarrow
\infty . \label{3.35}
\end{equation}

Observe that $z_{k_{T}}=\frac{\sin \left( \frac{1}{2}T(\hat{\varphi}_{k_{T}}-\varphi )\right) }{%
\frac{1}{2}T(\hat{\varphi}_{k_{T}}-\varphi )}\cos \left( \frac{1}{2}T(\hat{%
\varphi}_{k_{T}}-\varphi _{k})\right) .$ Since $1-\cos
x<\frac{x^{2}}{2},$ $x>0,$ using (\ref{3.35}), we see that, for any
$\epsilon >0,$
\begin{equation}
P\left\{ 1-\cos \left( \frac{1}{2}T(\hat{\varphi}_{k_{T}}-\varphi
)\right)
>\epsilon \right\} \leq P\left\{ T|\hat{\varphi}_{k_{T}}-\varphi
|>2\sqrt{2\epsilon }\right\} \longrightarrow 0,\quad T\longrightarrow \infty . \label{3.36}
\end{equation}
From (\ref{3.34b}) and (\ref{3.36}), we get that
$z_{k_{T}}=1+o_{P}(1),\ k=1,\ldots ,N.$ Moreover,
from (\ref{3.35}), we  obtain  $\sin \left( \frac{1}{2}T(\hat{\varphi}%
_{k_{T}}-\varphi )\right) =o_{P}(1),$ and thus,
$y_{k_{T}}=o_{P}(1),$ $k=1,\ldots ,N.$ Finally, from
(\ref{3.24}), we then have $$\hat{A}_{k_{T}}\overset{P}{\longrightarrow }A_{k},\ \hat{B}_{k_{T}}\overset{P}{%
\longrightarrow }B_{k},\quad k=1,\ldots ,N,\quad T\longrightarrow
\infty.$$

\end{proof}

\section{\textbf{Linearization and asymptotic uniqueness}}

This section reviews and clarifies a number of results, on non-linear
regression, in particular, from Ivanov and Leonenko (2004, 2008, 2009).
Consider the general non-linear regression model (\ref{nonregr1}) with the
noise process satisfying condition \textbf{A2}. Let $\hat{\theta}_{T}$ be
the LSE of an unknown parameter $\theta ,$ that is, a random vector $\hat{%
\theta}_{T}\in \Theta ^{c}$ having the property (\ref{3.4}). The following
assumption is considered.

\textbf{B1}. Suppose that $g(t,\tau )$ is twice differentiable with respect
to $\tau \in \Theta ^{c}$.

\noindent Under \textbf{B1}, we then get
\begin{equation}
g_{i}(t,\tau )=\frac{\partial }{\partial \tau _{i}}g(t,\tau ),\quad
g_{il}(t,\tau )=\frac{\partial ^{2}}{\partial \tau _{i}\partial \tau _{l}}%
g(t,\tau ),\quad i,l=1,\ldots ,q,  \label{4.1}
\end{equation}
\begin{equation*}
d_{T}(\tau )=diag\left( d_{iT}(\tau )\right)_{i=1}^{q},\ \tau \in \Theta
^{c}, \quad d_{iT}^{2}(\theta )=\int_{0}^{T}g_{i}^{2}(t,\theta )dt\quad
i=1,\ldots ,q.
\end{equation*}

\noindent Additionally, let us assume:

\noindent\ \ \ \ \textbf{B2} The following positive limits exist $\underline{%
\lim }_{T\rightarrow \infty }T^{-1}d_{iT}^{2}(\theta )>0,$ for $i=1,\ldots
,q.$

\noindent Note that the limits in (\ref{4.1}) can be, in particular, not
finite. Let also
\begin{equation}
d_{il,T}^{2}(\tau )=\int_{0}^{T}g_{il}^{2}(t,\tau )dt,\quad \tau \in \Theta
^{c},\quad \ i,l=1,\ldots ,q.  \label{4.2}
\end{equation}

Consider now the normalized LSE
\begin{equation}
\hat{u}_{T}=d_{T}(\theta )(\hat{\theta}_{T}-\theta ),  \label{4.3}
\end{equation}%
and the notation: $g(t,\theta +d_{T}^{-1}(\theta )u)=h(t,u),\ g_{i}(t,\theta
+d_{T}^{-1}(\theta )u)=h_{i}(t,u),i=1,\ldots ,q,\ g_{il}(t,\theta
+d_{T}^{-1}(\theta )u)=h_{il}(t,u),$ for $i,l=1,\ldots ,q,$ as well as $%
V(R)=\left\{ u\in \mathbb{R}^{q}:\left\Vert u\right\Vert <R\right\} $ for
the ball of radius $R.$ Here, the following change of variables is
performed: $u=d_{T}(\theta )(\tau -\theta ).$ The letter $k$ will be used
for denoting positive constants. The following assumptions are formulated,
for $R\geq 0,$ $\theta \in \Theta ,$ and $T>T_{0}(R)$ sufficiently large:

\noindent\ \ \ \ \ \textbf{B3} \quad $\sup_{t\in \left[ 0,T\right]
}\sup_{u\in V^{c}(R)}\frac{\left\vert h_{i}(t,u)\right\vert }{d_{iT}(\theta )%
}\leq k^{i}(R)T^{-1/2},$
\begin{equation}
\sup_{t\in \left[ 0,T\right] }\sup_{u\in V^{c}(R)}\frac{\left\vert
h_{il}(t,u)\right\vert }{d_{il,T}(\theta )}\leq k^{il}(R)T^{-1/2},\
\sup_{t\in \left[ 0,T\right] }\sup_{\theta \in V^{c}(R)}\frac{%
d_{il,T}(\theta )}{d_{iT}(\theta )d_{lT}(\theta )}\leq \tilde{k}%
^{il}(R)T^{-1/2},\quad i,l=1,\ldots ,q.  \label{4.4}
\end{equation}

We will use the notation: $H(t;u_{1},u_{2})=h(t,u_{1})-h(t,u_{2}),$ $%
H_{i}(t;u_{1},u_{2})=h_{i}(t,u_{1})-h_{i}(t,u_{2}),$ for $i=1,\ldots,q.$
Introduce also the vectors $\psi _{T}(u)=\left( \psi _{T}^{i}(u)\right)
_{i=1}^{q},$ with
\begin{equation}
\psi _{T}^{i}(u)=\int_{0}^{T}\varepsilon (t)\frac{h_{i}(t,u)}{d_{iT}(\theta )%
}dt+\int_{0}^{T}H(t;0,u)\frac{h_{i}(t,u)}{d_{iT}(\theta )}dt,  \label{4.7}
\end{equation}%
and $L_{T}(u)=\left( L_{T}^{i}(u)\right) _{i=1}^{q}$, with
\begin{equation}
L_{T}^{i}(u)=\int_{0}^{T}\left( \varepsilon (t)-\sum_{l=1}^{q}\frac{%
g_{l}(t,\theta )}{d_{lT}(\theta )}u_{l}\right) \frac{g_{i}(t,\theta )}{%
d_{iT}(\theta )}dt,\ \ i=1,\ldots ,q.  \label{4.8}
\end{equation}

The vectors (\ref{4.7}) and (\ref{4.8}) are defined for $u\in
U_{T}^{c}(\theta ),$ where $U_{T}(\theta )=d_{T}(\theta )(\Theta -\theta ).$
Note that, under our assumptions, for any $R>0,$ $V^{c}(R)\subset
U_{T}(\theta ),$ for $T>T_{0}(R).$

The normalized LSE $\hat{u}_{T}$ satisfies the system of normal equations:%
\begin{equation}
\psi _{T}(u)=0,  \label{4.9}
\end{equation}%
while the vector $L_{T}(\theta )$ corresponds to the auxiliary linear
regression model:%
\begin{equation}
Z(t)=\sum_{i=1}^{q}g_{i}(t,\theta )\beta _{i}+\varepsilon (t),\ t\in \left[
0,T\right] .  \label{4.10}
\end{equation}

The system of normal equations for the linear regression model (\ref{4.10})
\begin{equation}
L_{T}(\theta )=0,  \label{4.11}
\end{equation}%
\noindent determines the normed LSE $\tilde{u}_{T}$ of the parameter $\beta
, $ if
\begin{equation}
\tilde{u}_{T}=d_{T}(\theta )(\tilde{\beta}_{T}-\beta ),  \label{4.12}
\end{equation}%
where $\tilde{\beta}_{T}$ is the ordinary LSE of the parameter $\beta $ in
the model (\ref{4.10}).

\begin{theorem}
\label{th228} Under the assumptions \textbf{A1}-\textbf{A3} and \textbf{B1}-%
\textbf{B3}, for any $R>0,$ $r>0$
\begin{equation}
P\left\{ \sup_{u\in V^{c}(R)}\left\Vert \psi _{T}(u)-L_{T}(u)\right\Vert
>r\right\} \longrightarrow 0,\ T\longrightarrow \infty .  \label{4.13}
\end{equation}
\end{theorem}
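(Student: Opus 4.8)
The plan is to compare $\psi_T(u)$ and $L_T(u)$ term by term. Writing out the difference, we have
\begin{equation*}
\psi_T^i(u)-L_T^i(u)=\int_0^T\varepsilon(t)\left(\frac{h_i(t,u)}{d_{iT}(\theta)}-\frac{g_i(t,\theta)}{d_{iT}(\theta)}\right)dt+\int_0^T H(t;0,u)\frac{h_i(t,u)}{d_{iT}(\theta)}dt+\sum_{l=1}^q u_l\int_0^T\frac{g_l(t,\theta)g_i(t,\theta)}{d_{lT}(\theta)d_{iT}(\theta)}dt.
\end{equation*}
The third term, combined with the leading part of $H(t;0,u)$, should produce cancellation: Taylor-expanding $H(t;0,u)=h(t,u)-h(t,0)$ to first order in $u$ gives $\sum_l d_{lT}^{-1}(\theta)g_l(t,\theta)u_l$ plus a second-order remainder controlled by $h_{il}$. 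So I would split $H(t;0,u)$ and $h_i(t,u)$ into their values at $u=0$ plus increments, and organize $\psi_T^i(u)-L_T^i(u)$ into (a) a "stochastic" piece involving $\varepsilon(t)$ against the increment $H_i(t;0,u)=h_i(t,u)-g_i(t,\theta)$, and (b) several "deterministic" pieces that are the Taylor remainders of the regression function and its derivatives.

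The deterministic pieces are handled by the uniform bounds in \textbf{B3}. Using the mean value theorem, $|h_i(t,u)-g_i(t,\theta)|\le \|u\| \sup_{v\in V^c(R)}\max_l |h_{il}(t,v)|d_{lT}^{-1}(\theta)$, and then \textbf{B3} together with \textbf{B2} gives a bound of order $R^2 T^{-1/2}$ uniformly in $u\in V^c(R)$ after summing over $l$; similarly the second-order remainder in $H(t;0,u)$ is bounded using the $h_{il}$ estimate, and the ratio $d_{il,T}(\theta)d_{iT}^{-1}(\theta)d_{lT}^{-1}(\theta)\le \tilde k^{il}(R)T^{-1/2}$ finishes these terms. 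Integrating over $[0,T]$ costs a factor $T$, which is killed by the $T^{-1/2}$'s with room to spare, so all deterministic contributions are $O(R^{C}T^{-1/2})\to 0$ uniformly in $u\in V^c(R)$.

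The stochastic piece is the heart of the argument, and the main obstacle is getting a \emph{uniform} (in $u\in V^c(R)$) smallness statement rather than pointwise. I would bound
\begin{equation*}
\sup_{u\in V^c(R)}\left|\int_0^T\varepsilon(t)\,\frac{h_i(t,u)-g_i(t,\theta)}{d_{iT}(\theta)}\,dt\right|
\end{equation*}
by pulling out $\sup_t\sup_u |h_i(t,u)-g_i(t,\theta)|/d_{iT}(\theta)$, which is $O(R^2 T^{-1/2})$ by \textbf{B3}, times $\int_0^T|\varepsilon(t)|\,dt\le T^{1/2}(\int_0^T \varepsilon^2(t)\,dt)^{1/2}$; but a cleaner route, and the one matching Lemma~\ref{lem22}, is to estimate the relevant integral using the spectral/Fourier representation, reducing $\int_0^T\varepsilon(t)w_T(t,u)\,dt$ (with $w_T$ the normalized increment of $g_i$) to something controlled by $\eta(T)=\sup_\lambda T^{-1}|\int_0^T e^{-i\lambda t}\varepsilon(t)\,dt|$, which tends to zero in $L^2$ by Lemma~\ref{lem22}. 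The point is that $w_T(t,u)$, being built from sinusoids $\cos\varphi_k t,\sin\varphi_k t$ in the model of interest (and more generally controlled via \textbf{B3} for the general class), has a Fourier transform whose mass is uniformly bounded, so a single appeal to $E\eta^2(T)\to0$ gives uniformity over $u$. Combining the deterministic estimates with this stochastic estimate and applying Markov's inequality yields, for every $R,r>0$, $P\{\sup_{u\in V^c(R)}\|\psi_T(u)-L_T(u)\|>r\}\to0$, which is \eqref{4.13}.
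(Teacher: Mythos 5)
Your decomposition of $\psi_T^i(u)-L_T^i(u)$ into a stochastic term $I_1(u)=\int_0^T\varepsilon(t)H_i(t;u,0)d_{iT}^{-1}(\theta)\,dt$ plus two deterministic Taylor remainders is exactly the paper's, and your treatment of the deterministic remainders via \textbf{B3} is sound. The gap is in the stochastic term. Your first bound --- pulling out $\sup_{t,u}|H_i(t;u,0)|/d_{iT}(\theta)$, which by \textbf{B3} is of order $RT^{-1}$ (not $R^2T^{-1/2}$), and multiplying by $\int_0^T|\varepsilon(t)|\,dt=O_P(T)$ --- yields only $O_P(R)$, which is bounded but does not tend to zero, so this route fails outright. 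Your second route, reducing the integral to $\eta(T)=\sup_\lambda T^{-1}\left|\int_0^Te^{-i\lambda t}\varepsilon(t)\,dt\right|$ and invoking Lemma~\ref{lem22}, would require a uniform (in $u$ and $T$) bound on the total Fourier mass, or total variation, of the normalized increments $t\mapsto H_i(t;u,0)/d_{iT}(\theta)$; conditions \textbf{B1}--\textbf{B3} supply only sup-norm bounds and give no such control for the general class of regression functions to which Theorem~\ref{th228} applies, and even for the trigonometric model you assert this rather than verify it.

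What the paper actually does for $I_1$ is: (i) prove pointwise convergence in $L^2$ by estimating $EI_1^2(u)\le\sup_t\{H_i^2(t;u,0)/d_{iT}^2(\theta)\}\int_0^T\int_0^T|E\varepsilon(t)\varepsilon(s)|\,dt\,ds$, then using the Hermite expansion $E\varepsilon(t)\varepsilon(s)=\sum_{k\ge m}\frac{C_k^2}{k!}B^k(t-s)$ so that $|E\varepsilon(t)\varepsilon(s)|\le EG^2(\xi(0))|B(t-s)|^m$ and $T^{-2}\int_0^T\int_0^T|B(t-s)|^m\,dt\,ds\to0$; this is where \textbf{A1}--\textbf{A3} and the slowly-varying-function lemma enter, and it is precisely the decay of the noise covariance that recovers the factor your crude bound loses; and (ii) upgrade from pointwise to uniform convergence over $V^c(R)$ by a finite $h$-net together with the equicontinuity estimate $P\{\sup_{\Vert u_1-u_2\Vert\le h}|I_1(u_1)-I_1(u_2)|>r\}\le k_1h/r$, obtained from Markov's inequality and \textbf{B3}. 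Both of these ingredients are absent from your argument, and without them the uniform convergence in probability of the stochastic term is not established.
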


The proof of Theorem \ref{th228} is given in Appendix 1.

In this section, we show that the LSE $\hat{\theta}_{T}$ in certain sense is
the asymptotically unique solution of the system of normal equations (\ref%
{4.9}) as $T\longrightarrow \infty .$ Let us first consider $J_{T}(\theta
)=\left( J_{il,T}(\theta )\right) _{i,l=1}^{q},$ where
\begin{equation}
J_{il,T}(\theta )=d_{iT}^{-1}(\theta )d_{lT}^{-1}(\theta
)\int_{0}^{T}g_{i}(t,\theta )g_{l}(t,\theta )dt.  \label{5.1}
\end{equation}
Denote by $\lambda _{\min }(A)$ and $\lambda _{\max }(A)$ the respective
minimal and maximal eigenvalues of a positive definite matrix $A.$ Let us
formulate the next condition:

\noindent\ \ \ \ \textbf{B4} For some $\lambda _{\ast }>0$ and $T>T_{0},$
\begin{equation*}
\lambda _{\min }\left( J_{T}(\theta )\right) \geq \lambda _{\ast }.
\end{equation*}

Consider now the normed LSE
\begin{equation}
\hat{w}_{T}=T^{-1/2}d_{T}(\theta )(\hat{\theta}_{T}-\theta ),  \label{5.2}
\end{equation}%
\noindent where the change of variables $w=T^{-1/2}d_{T}(\theta )(\tau
-\theta )$ is applied into the regression function and its derivatives. The
following notation is established: For $i,l=1,\ldots q,$
\begin{equation*}
f(t,w)=g(t,\theta +T^{1/2}d_{T}^{-1}(\theta )w),\ f_{i}(t,w)=g_{i}(t,\theta
+T^{1/2}d_{T}^{-1}(\theta )w),\ f_{il}(t,w)=g_{il}(t,\theta
+T^{1/2}d_{T}^{-1}(\theta )w).
\end{equation*}

Additionally, we denote, for $i,l=1,\ldots q,$ $%
F(t;w_{1},w_{2})=f(t,w_{1})-f(t,w_{2}),$
\begin{equation*}
F_{i}(t;w_{1},w_{2}) = f_{i}(t,w_{1})-f_{i}(t,w_{2}),\ F_{il}(t;w_{1},w_{2})
= f_{il}(t,w_{1})-f_{il}(t,w_{2}), \ \Phi
_{il,T}(w,0)=\int_{0}^{T}F_{il,T}^{2}(t;w,0)dt.
\end{equation*}

Finally, the following assumption is considered in the derivation of Theorem %
\ref{uniquiness} below:%
\begin{equation*}
\end{equation*}

\noindent\ \ \ \ \textbf{B5} For some $\tau _{0}>0,$ and for $i,l=1,\ldots
q, $ \quad $\sup_{t\in \left[ 0,T\right] }\sup_{w\in V^{c}(\tau _{0})}\frac{%
\left\vert f_{i}(t,w)\right\vert }{d_{iT}(\theta )}\leq \tilde{k}^{i}(\tau
_{0})T^{-1/2},$
\begin{equation*}
\sup_{t\in \left[ 0,T\right] }\sup_{w\in V^{c}(\tau _{0})}\frac{\left\vert
f_{il}(t,w)\right\vert }{d_{il,T}(\theta )}\leq \tilde{k}^{il}(\tau
_{0})T^{-1/2},\ \sup_{w\in V^{c}(\tau _{0})}Td_{iT}^{-2}(\theta
)d_{lT}^{-2}(\theta )\Phi _{il,T}(w,0)\left\Vert w\right\Vert ^{-2}\leq \hat{%
k}_{il}(\tau _{0}).
\end{equation*}

Consider the functional
\begin{equation*}
(2T)^{-1}\int_{0}^{T}\left[ x(t)-f(t,w)\right] ^{2}dt=\frac{1}{2}%
Q_{T}(\theta +T^{1/2}d_{T}^{-1}(\theta )w),
\end{equation*}%
\noindent and the vector
\begin{equation*}
\mathbf{M}_{T}(w)=\left( M_{T}^{i}(w)\right) _{i=1}^{q}=\left( \frac{%
\partial }{\partial w_{i}}\left( \frac{1}{2}Q_{T}(\theta
+T^{1/2}d_{T}^{-1}(\theta )w)\right) \right) _{i=1}^{q} =\left(
T^{-1/2}\int_{0}^{T}\left[ x(t)-f(t,w)\right] \frac{-f_{i}(t,w)}{%
d_{iT}(\theta ) }dt\right) _{i=1}^{q}.
\end{equation*}

Then, the normed LSE (\ref{5.2}) satisfies the system of equations
\begin{equation}
\mathbf{M}_{T}(w)=0.  \label{5.3}
\end{equation}%
\noindent\ \ \ \ \noindent\ \ \ \ \noindent\ \ \ \

\noindent \textbf{C} For any $r>0,$
\begin{equation*}
P\left\{ \left\Vert \hat{w}_{T}\right\Vert >r\right\} \longrightarrow
0,\quad T\longrightarrow \infty .
\end{equation*}

Note that if the normed LSE $\hat{w}_{T}$ is an unique solution of the
system of equations (\ref{5.3}), then, the LSE $\hat{u}_{T}$ is the unique
solution of the system (\ref{4.9}).

\begin{theorem}
\label{uniquiness} Under conditions \textbf{A1}-\textbf{A3}, \textbf{B1}-%
\textbf{B5} and \textbf{C}, the normed LSE (\ref{5.2}) is an unique solution
of the system of equations (\ref{5.3}) with probability tending to $1,$ as $%
T\longrightarrow \infty .$
\end{theorem}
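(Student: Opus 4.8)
The plan is to combine the consistency in condition \textbf{C} with a uniform positive-definiteness estimate for the Jacobian of $\mathbf{M}_T$ on a small ball: this makes the functional $\frac{1}{2}Q_T(\theta+T^{1/2}d_T^{-1}(\theta)w)$ strictly convex there, so it cannot have two critical points. Since \textbf{C} gives $P\{\hat w_T\notin V^c(\rho)\}\to 0$ for every $\rho\in(0,\tau_0]$, and since $\hat w_T$, being a minimizer of $Q_T$, solves (\ref{5.3}), it suffices to fix a sufficiently small $\rho$ and show that, with probability tending to $1$, the system (\ref{5.3}) has at most one solution in $V^c(\rho)$; then $\hat w_T$, which by \textbf{C} lies in $V^c(\rho)$ with probability tending to one, is that unique solution.

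Under \textbf{B1} the map $w\mapsto\mathbf{M}_T(w)$ is $C^1$. Using $\partial_{w_l}f(t,w)=T^{1/2}d_{lT}^{-1}(\theta)f_l(t,w)$, $\partial_{w_l}f_i(t,w)=T^{1/2}d_{lT}^{-1}(\theta)f_{il}(t,w)$ and $x(t)-f(t,w)=\varepsilon(t)-F(t;w,0)$ (recall $f(t,0)=g(t,\theta)$), the Jacobian $\mathbf{M}_T'(w)=(\partial M_T^i/\partial w_l)_{i,l=1}^q$, which is symmetric since $\mathbf{M}_T$ is a gradient, is
\[
\frac{\partial M_T^i}{\partial w_l}(w)=J_{il,T}(w)-\rho_{il,T}(w),\qquad J_{il,T}(w)=d_{iT}^{-1}(\theta)d_{lT}^{-1}(\theta)\int_0^T f_i(t,w)f_l(t,w)\,dt,
\]
\[
\rho_{il,T}(w)=d_{iT}^{-1}(\theta)d_{lT}^{-1}(\theta)\int_0^T\big[\varepsilon(t)-F(t;w,0)\big]f_{il}(t,w)\,dt .
\]
Note $J_T(0)=J_T(\theta)$, the matrix (\ref{5.1}), whose smallest eigenvalue is $\ge\lambda_\ast$ by \textbf{B4}.

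The next step is to estimate $\mathbf{M}_T'(w)-J_T(\theta)$ uniformly over $V^c(\rho)$. For the deterministic contributions, the fundamental theorem of calculus along $[0,w]$, the pointwise bounds of \textbf{B5}, and the third inequality of \textbf{B3} yield $|f_i(t,w)-g_i(t,\theta)|\le k\|w\|d_{iT}(\theta)T^{-1/2}$ and $\big(\int_0^T(f(t,w)-g(t,\theta))^2\,dt\big)^{1/2}\le k\|w\|T^{1/2}$; together with $\|f_i(\cdot,w)\|_{L^2[0,T]}\le k\,d_{iT}(\theta)$, $\|f_{il}(\cdot,w)\|_{L^2[0,T]}\le k\,d_{il,T}(\theta)$, $\|g_i(\cdot,\theta)\|_{L^2[0,T]}=d_{iT}(\theta)$ and Cauchy--Schwarz, this gives $\sup_{w\in V^c(\rho)}\|J_T(w)-J_T(\theta)\|\le k\rho$ and $\sup_{w\in V^c(\rho)}\big|d_{iT}^{-1}(\theta)d_{lT}^{-1}(\theta)\int_0^T F(t;w,0)f_{il}(t,w)\,dt\big|\le k\rho$. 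For the genuinely stochastic term $d_{iT}^{-1}(\theta)d_{lT}^{-1}(\theta)\int_0^T\varepsilon(t)f_{il}(t,w)\,dt$ I would show it is $o_P(1)$ uniformly in $w$: split $\varepsilon=G_1(\xi)+G_2(\xi)$ with $G_1$ a finite Hermite sum as in the proof of Lemma \ref{lem22}; the $G_2$-contribution is $<\epsilon$ by Cauchy--Schwarz and convergence of $\sum_k C_k^2/k!$, while the $G_1$-contribution is handled through its second moment, which by the diagram formula (\ref{3.8})--(\ref{3.9}) reduces to double integrals of powers of $|B(t)|\le B_0(t)=(1+t^2)^{-\alpha/2}$ now carrying a factor $f_{il}$, of exactly the kind of the quantities $\gamma_1,\gamma_2,\gamma_3$ treated in Lemma \ref{lem22}; these are controlled via the pointwise bound on $f_{il}$ from \textbf{B5} and $d_{il,T}(\theta)/(d_{iT}(\theta)d_{lT}(\theta))\le kT^{-1/2}$ from \textbf{B3}, and uniformity in $w$ is obtained by an equicontinuity (chaining) argument whose modulus of continuity is estimated through $\Phi_{il,T}(\cdot,\cdot)$ as in \textbf{B5}.

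Finally, fix $\rho$ so small that the deterministic part of $\mathbf{M}_T'(w)-J_T(\theta)$ has operator norm $\le\lambda_\ast/4$ uniformly on $V^c(\rho)$. On the event of probability tending to $1$ on which the remaining, stochastic, part has operator norm $<\lambda_\ast/4$ uniformly on $V^c(\rho)$, condition \textbf{B4} gives $\lambda_{\min}(\mathbf{M}_T'(w))\ge\lambda_\ast/2$ for all $w\in V^c(\rho)$. Then, whenever $w^{(1)},w^{(2)}\in V^c(\rho)$ both solve (\ref{5.3}), putting $w_s=w^{(2)}+s(w^{(1)}-w^{(2)})\in V^c(\rho)$ and applying the fundamental theorem of calculus to $\mathbf{M}_T$,
\[
0=\big\langle\mathbf{M}_T(w^{(1)})-\mathbf{M}_T(w^{(2)}),\,w^{(1)}-w^{(2)}\big\rangle=\int_0^1\big\langle\mathbf{M}_T'(w_s)(w^{(1)}-w^{(2)}),\,w^{(1)}-w^{(2)}\big\rangle\,ds\ge\frac{\lambda_\ast}{2}\|w^{(1)}-w^{(2)}\|^2 ,
\]
so $w^{(1)}=w^{(2)}$; with the reduction of the first paragraph, this proves the theorem. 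The hard part is the uniform-in-$w$ control of the stochastic remainder $d_{iT}^{-1}(\theta)d_{lT}^{-1}(\theta)\int_0^T\varepsilon(t)f_{il}(t,w)\,dt$: the fixed-$w$ bound is a routine adaptation of Lemma \ref{lem22}, but making it uniform over the ball while keeping track of the normalizations $d_{iT}$, $d_{il,T}$ is delicate; the rest is a mean-value identity together with \textbf{B4}.
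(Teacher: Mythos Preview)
Your strategy matches the paper's: show the Hessian $\mathbf{M}_T'(w)=\mathcal{H}_T(w)$ stays uniformly close to $J_T(\theta)$ on a small ball, then invoke \textbf{B4} and \textbf{C}. Your decomposition into $J_{il,T}(w)-J_{il,T}(\theta)$ and the $\rho_{il,T}$-terms is essentially the paper's $I_1,\dots,I_5$ in Appendix~2, and your mean-value conclusion is a slightly more explicit version of the paper's final paragraph.

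The one place you work harder than necessary is the stochastic term $d_{iT}^{-1}d_{lT}^{-1}\int_0^T\varepsilon(t)f_{il}(t,w)\,dt$, which you flag as the ``delicate'' step and propose to attack with the $G_1+G_2$ split, the diagram formula, and a chaining argument. The paper avoids all of this by a simple centering: write $f_{il}(t,w)=f_{il}(t,0)+F_{il}(t;w,0)$. The $w=0$ piece (their $I_7$) is a single random variable with no $w$-dependence, and $E|I_7|^2\to0$ follows directly from the pointwise bound on $f_{il}(t,0)$ in \textbf{B3}/\textbf{B5} together with $T^{-2}\int_0^T\!\int_0^T|B(t-s)|^m\,dtds\to0$; only the Hermite-orthogonality identity $E\varepsilon(t)\varepsilon(s)=\sum_{k\ge m}\frac{C_k^2}{k!}B^k(t-s)$ is used, not the full diagram formula. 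The difference piece (their $I_6$) is handled by a single Cauchy--Schwarz:
\[
|I_6(w)|\le\Big(\tfrac{1}{T}\!\int_0^T\!\varepsilon^2(t)\,dt\Big)^{1/2}\Big(\tfrac{T\,\Phi_{il,T}(w,0)}{d_{iT}^2(\theta)d_{lT}^2(\theta)}\Big)^{1/2}\le\Big(\tfrac{1}{T}\!\int_0^T\!\varepsilon^2(t)\,dt\Big)^{1/2}\hat k_{il}^{1/2}\,\|w\|,
\]
the last step being exactly the third inequality in \textbf{B5}. Since $\frac{1}{T}\int_0^T\varepsilon^2(t)\,dt\to E\varepsilon^2(0)$ in probability (a one-line second-moment computation via the Hermite expansion of $G^2$), this gives a uniform $O_P(\|w\|)$ bound automatically---no chaining, no equicontinuity, no diagram formula. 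So the part you identified as hardest actually dissolves; this is the main simplification the paper's proof offers over your outline.
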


We place the proof of this theorem into Appendix 2.

\begin{remark}
The verification of the conditions \textbf{B3} - \textbf{B5} fulfilment for
regression function (\ref{nonregr2}) is not difficult and we omit it.
\end{remark}

\section{\textbf{Central limit theorem}}

This section is derivation of the convergence to the Gaussian distribution.
This convergence result is obtained, under conditions, for the integral
functional

\begin{equation}
\zeta _{T}=d_{T}^{-1}(\theta )\int\limits_{0}^{T}\nabla g(t,\theta )G(\xi
(t))dt,  \label{6.1}
\end{equation}%
\noindent as $T\longrightarrow \infty ,$ where $g(t,\theta )$ is the general
regression function and $\nabla g(t,\theta )=\left( g_{i}(t,\theta )\right)
_{i=1}^{q}$ is its gradient. We introduce a family of a matrix-valued
measures $\boldsymbol{\mu }_{T}(d\lambda )=\left( \mu _{T}^{jl}(d\lambda
,\theta )\right) _{j,l=1}^{q},T>0,$ where $\mu _{T}^{jl}(\lambda ,\theta )$
are given by (\ref{regmes1}). Using the notation (\ref{regmes1})-(\ref%
{regmes2}) and (\ref{4.1}) note that
\begin{equation*}
d_{jT}^{2}(\theta )=\frac{1}{2\pi }\int_{\mathbb{R}}\left\vert
g_{T}^{j}(\lambda ,\theta )\right\vert ^{2}d\lambda ,\quad j=1,\dots ,q.
\end{equation*}

\noindent \textbf{B6} The family of measures $\boldsymbol{\mu }_{T}(d\lambda
)$ converges weakly to the measure $\boldsymbol{\mu }(d\lambda ,\theta
)=\left( \mu ^{jl}(d\lambda ,\theta )\right) _{j,l=1}^{q},$ as $%
T\longrightarrow \infty .$

Condition \textbf{B6} means that the elements $\mu ^{jl}(d\lambda ,\theta )$
of the matrix $\boldsymbol{\mu }(d\lambda ,\theta )$ are complex signed
measures of bounded variation and the matrix $\boldsymbol{\mu }(\mathcal{A}%
,\theta )$ is positive semi-definite for any Borel set $\mathcal{A}.$ The
limiting measure $\boldsymbol{\mu }(d\lambda ,\theta )$ is called the
spectral measure of the regression function $g(t,\theta )$, see Grenander
and Rosenblatt (1984), Holevo (1976), Ibragimov and Rozanov (1980), Ivanov
and Leonenko (1989). Note that
\begin{eqnarray*}
\int_{\mathbb{R}}\boldsymbol{\mu }_{T}(d\lambda ,\theta ) &=&\left( \int_{%
\mathbb{R}}\mu _{T}^{jl}(d\lambda ,\theta )\right) _{j,l=1}^{q} \\
&=&\left( d_{jT}^{-1}(\theta )d_{lT}^{-1}(\theta
)\int\limits_{0}^{T}g_{j}(t,\theta )g_{l}(t,\theta )dt\right)
_{j,l=1}^{q}=\left( J_{jl,T}(\theta )\right) _{j,l=1}^{q}=J_{T}(\theta ),
\end{eqnarray*}%
and $J_{T}(\theta )\longrightarrow J(\theta ),$ as $T\longrightarrow \infty
, $ for matrix $J(\theta )>0,$ such that $\lambda _{\min }\left( J(\theta
)\right) \geq \tilde{\lambda}_{\ast }>0$. We then have
\begin{equation*}
J(\theta )=\left( \int_{\mathbb{R}}\mu ^{jl}(d\lambda ,\theta )\right)
_{j,l=1}^{q}=\int_{\mathbb{R}}\boldsymbol{\mu }(d\lambda ,\theta ).
\end{equation*}

Note also that
\begin{equation*}
\zeta _{T}=\left( \int\limits_{0}^{T}G(\xi (t))v_{iT}(t,\theta )dt\right)
_{i=1}^{q},
\end{equation*}%
\noindent where
\begin{equation*}
v_{iT}(t,\theta )=g_{i}(t,\theta )d_{iT}^{-1}(t,\theta ).
\end{equation*}%
The weak convergence of the random vector
\begin{equation}
\zeta _{T}=\left( \zeta _{1T},\ldots ,\zeta _{qT}\right) \Rightarrow \left(
\zeta _{1},\ldots ,\zeta _{q}\right) =\zeta ,  \label{6.2}
\end{equation}%
\noindent is equivalent to convergence of the characteristic functions: for
any $z\in \mathbb{R}^{q}$,%
\begin{equation}
\mathit{E}\exp \left\{ iu\left\langle \zeta _{T},z\right\rangle \right\}
\underset{T\longrightarrow \infty }{\longrightarrow }\mathit{E}\exp \left\{
iu\left\langle \zeta ,z\right\rangle \right\} ,\ u\in \mathbb{R}^{1}.
\label{6.3}
\end{equation}%
\noindent Thus, the convergence in (\ref{6.2}) will follow from (\ref{6.3}).

Under the condition \textbf{A3}, consider expansion (\ref{3.1})

\begin{equation*}
G(x)=\sum\limits_{k=1}^{\infty }\frac{C_{k}}{k!}H_{k}(x).  \label{6.4}
\end{equation*}

For $z=\left( z_{1},\ldots ,z_{q}\right) \in \mathbb{R}^{q}$, we denote
\begin{equation*}
\sum\limits_{i=1}^{q}z_{i}v_{iT}(t,\theta )=R_{T}(t,\theta ,z)=R_{T}(t).
\end{equation*}%
\noindent Then,
\begin{equation*}
\left\langle \zeta _{T},z\right\rangle =\int\limits_{0}^{T}G(\xi
(t))R_{T}(t)dt=\sum\limits_{k=m}^{\infty }\frac{C_{k}}{k!}%
\int\limits_{0}^{T}H_{m}(\xi (t))R_{T}(t)dt,
\end{equation*}%
\noindent in the Hilbert space $\mathit{L}_{2}\left( \Omega ,\mathbb{F}%
,P\right) ,$ and
\begin{equation}
\mathit{E}\left\langle \zeta _{T},z\right\rangle
^{2}=\sum\limits_{k=m}^{\infty }\frac{C_{k}^{2}}{k!}\int\limits_{0}^{T}\int%
\limits_{0}^{T}B^{k}(t-s)R_{T}(t)R_{T}(s)dtds.  \label{6.5}
\end{equation}

The following condition is now assumed:

\noindent\ \ \ \ \ \ \textbf{A4} Either 1) Hrank$(G)=1,$ $\alpha >1;$ or 2)
Hrank$(G)=m\geq 2,$ $\alpha m>1;$ where $\alpha =\min_{j=0,1,\ldots
,\kappa}\alpha _{j}.$

In the further reasoning we use the part 2) of condition \ \textbf{A4.}

For $k\geq m\geq 2,$ let
\begin{equation}
f^{(\ast k)}(\lambda )=\int_{R^{k-1}}f(\lambda -\lambda _{2}-\cdots -\lambda
_{k})\prod\limits_{i=2}^{k}f(\lambda _{i})d\lambda _{2}\cdots d\lambda _{k},
\label{6.6}
\end{equation}%
\noindent the $k$-th convolution of the spectral density given under
assumption {\bfseries A1}. Under the condition \textbf{A4}, $B^{k}(t)\in
\mathit{L}_{1}(\mathbb{R}),$ $k\geq m.$ Thus, all convolutions $f^{(\ast
k)}(\lambda ),$ $k\geq m,$ are continuous and bounded functions under
\textbf{A4}, and
\begin{eqnarray}
\sigma _{T}^{2}(k,z)
&=&\int\limits_{0}^{T}\int\limits_{0}^{T}B^{k}(t-s)R_{T}(t)R_{T}(s)dtds=\sum%
\limits_{i,j=1}^{q}\left(
\int\limits_{0}^{T}\int\limits_{0}^{T}B^{k}(t-s)v_{iT}(t,\theta
)v_{jT}(s,\theta )dtds\right) z_{i}z_{j}  \notag \\
&=&2\pi \sum\limits_{i,j=1}^{q}\int_{\mathbb{R}}f^{(\ast k)}(\lambda )\mu
_{T}^{i,j}(d\lambda ,\theta )z_{i}z_{j}\longrightarrow 2\pi \int_{\mathbb{R}%
}f^{(\ast k)}(\lambda )m_{z}(d\lambda ,\theta )=\sigma _{k}^{2}(z),
\label{6.7}
\end{eqnarray}%
where $m_{z}(d\lambda ,\theta )=\sum\limits_{i,j=1}^{q}\mu ^{i,j}(d\lambda
,\theta )z_{i}z_{j}$ is a measure. Thus, as $T\longrightarrow \infty ,$
\begin{equation}
\mathit{E}\left\langle \zeta _{T},z\right\rangle ^{2}\longrightarrow 2\pi
\sum\limits_{k=m}^{\infty }\frac{C_{k}^{2}}{k!}\int_{\mathbb{R}}f^{(\ast
k)}(\lambda )m_{z}(d\lambda ,\theta )=\sum\limits_{k=m}^{\infty }\frac{%
C_{k}^{2}}{k!}\sigma _{k}^{2}(z)=\sigma ^{2}(z),  \label{6.7b}
\end{equation}

To prove asymptotic normality, the method of moments can be applied. That
is, for any integer $p\geq 2,$ it will be showed that
\begin{equation}
\lim_{T\rightarrow \infty }\mathit{E}\eta _{T}^{p}=\mathit{E}\eta
^{p}=\left\{
\begin{array}{l}
(p-1)!!\sigma ^{p}(z),\quad p=2\nu ,\quad \nu =1,2,\ldots , \\
0,\quad p=2\nu +1,\quad \nu =1,2,\ldots , \\
\end{array}%
\right.  \label{6.8}
\end{equation}

\noindent where
\begin{equation*}
\eta _{T}=\sum_{k=m}^{\infty }\frac{C_{k}}{k!}\int_{0}^{T}H_{k}(\xi
(t))R_{T}(t)dt,
\end{equation*}%
\noindent and $\eta \sim N(0,\sigma ^{2}(z)).$ Let
\begin{equation*}
\eta _{T}=\eta _{T}(M)+\eta _{T}^{\prime }(M),\quad \eta
_{T}(M)=\sum_{k=m}^{M}\frac{C_{k}}{k!}\int_{0}^{T}H_{k}(\xi (t))R_{T}(t)dt.
\end{equation*}

\begin{lemma}
\label{lem4} Assume that the conditions \textbf{A1}-\textbf{A3} and \textbf{%
B1}-\textbf{B3} are satisfied and, for any $M\geq m,$
\begin{equation}
\eta _{T}(M)\underset{T\rightarrow \infty }{\Longrightarrow }\eta (M)\sim
N(0,\sigma _{M}^{2}(z)),  \label{6.9}
\end{equation}%
\noindent where%
\begin{equation}
\sigma _{M}^{2}(z)=\sum_{k=m}^{M}\frac{C_{k}^{2}}{k!}\sigma _{k}^{2}(z),
\label{6.10}
\end{equation}%
\noindent then,
\begin{equation}
\eta _{T}\Longrightarrow \eta \sim N(0,\sigma ^{2}(z)).  \label{6.11}
\end{equation}
\end{lemma}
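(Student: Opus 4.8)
The plan is to show that $\eta_T(M)$ is a good approximation of $\eta_T$ uniformly in $T$ as $M\to\infty$, and then combine this with the hypothesized convergence \eqref{6.9} for fixed $M$ by a standard triangular-array/Slutsky argument. Concretely, I would invoke the classical approximation lemma for weak convergence (see e.g. Billingsley): if for each $M$ one has $\eta_T(M)\Rightarrow\eta(M)$ as $T\to\infty$, and $\eta(M)\Rightarrow\eta$ as $M\to\infty$, and
\begin{equation*}
\lim_{M\to\infty}\limsup_{T\to\infty}P\{|\eta_T-\eta_T(M)|>\delta\}=0\quad\text{for every }\delta>0,
\end{equation*}
then $\eta_T\Rightarrow\eta$. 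The first ingredient is exactly \eqref{6.9}. The second ingredient follows because $\eta(M)\sim N(0,\sigma_M^2(z))$ and $\sigma_M^2(z)\uparrow\sigma^2(z)<\infty$ by \eqref{6.10} and \eqref{6.7b}, so $\eta(M)\Rightarrow N(0,\sigma^2(z))=\eta$. Hence everything reduces to the uniform-smallness estimate for the remainder $\eta_T'(M)=\eta_T-\eta_T(M)=\sum_{k=M+1}^\infty\frac{C_k}{k!}\int_0^T H_k(\xi(t))R_T(t)\,dt$.

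For that remainder I would use Chebyshev's inequality, $P\{|\eta_T'(M)|>\delta\}\le \delta^{-2}\,\mathit{E}[\eta_T'(M)]^2$, and then control the second moment exactly as in the computation leading to \eqref{6.5}--\eqref{6.7b}. By orthogonality of Hermite polynomials of different orders (the $p=2$ diagram formula \eqref{3.9}),
\begin{equation*}
\mathit{E}[\eta_T'(M)]^2=\sum_{k=M+1}^\infty\frac{C_k^2}{k!}\int_0^T\int_0^T B^k(t-s)R_T(t)R_T(s)\,dt\,ds=\sum_{k=M+1}^\infty\frac{C_k^2}{k!}\,\sigma_T^2(k,z).
\end{equation*}
Now $\sigma_T^2(k,z)=2\pi\sum_{i,j=1}^q\int_{\mathbb R}f^{(*k)}(\lambda)\mu_T^{i,j}(d\lambda,\theta)z_iz_j$ from \eqref{6.7}, and under \textbf{A4} each convolution $f^{(*k)}$ is continuous and bounded; moreover, since $B^k(t)\in L_1(\mathbb R)$ is decreasing in $k$ (as $|B(t)|\le B_0(t)\le 1$ and in fact $\|B^k\|_1$ is summable against the geometric-type factors), one gets a uniform-in-$T$ bound $|\sigma_T^2(k,z)|\le c(k,z)$ with $\sum_{k\ge m}\frac{C_k^2}{k!}c(k,z)<\infty$. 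The cleanest way is to bound $\sup_\lambda f^{(*k)}(\lambda)\le \|B^k\|_1\cdot\sup_\lambda f^{(*m)}(\lambda)/\|B^m\|_1$ or, more simply, to note $\sigma_T^2(k,z)\le \|z\|^2\,\|B^k\|_\infty$-type estimates together with $\int_0^T\int_0^T|B(t-s)|^{m}\,v_{iT}v_{jT}\,dt\,ds$ being bounded uniformly in $T$ by condition \textbf{B3} and \textbf{A4}; in any case the tail $\sum_{k=M+1}^\infty\frac{C_k^2}{k!}\sigma_T^2(k,z)$ is dominated by the tail of a convergent series independent of $T$, hence $\to 0$ as $M\to\infty$ uniformly in $T$.

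The main obstacle is precisely establishing the uniform-in-$T$ domination of $\sigma_T^2(k,z)$ by a summable sequence: one must be careful that the bound on $\int\int B^k(t-s)R_T(t)R_T(s)\,dt\,ds$ does not degrade as $T\to\infty$, which is where condition \textbf{A4} ($\alpha m>1$, ensuring $B^k\in L_1$ for all $k\ge m$) and the weak convergence \textbf{B6} of $\boldsymbol\mu_T$ (giving uniform boundedness of total variation) are both used. Once that domination is in hand, Chebyshev plus the approximation lemma close the argument immediately, and one reads off $\eta_T\Rightarrow\eta\sim N(0,\sigma^2(z))$.
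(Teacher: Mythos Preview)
Your approach is correct and essentially the same as the paper's: Chebyshev on the remainder $\eta_T'(M)$ plus a standard approximation argument for weak convergence. The paper's uniform remainder bound is more direct than your spectral route, though: from \textbf{B3} one has $|R_T(t)|\le \|z\|\,\|k(0)\|\,T^{-1/2}$, so
\[
\mathit{E}[\eta_T'(M)]^2\le \|z\|^2\|k(0)\|^2\,T^{-1}\!\int_0^T\!\!\int_0^T|B(t-s)|^m\,dt\,ds\sum_{k>M}\frac{C_k^2}{k!}\le \|z\|^2\|k(0)\|^2\!\int_{\mathbb R}|B(t)|^m\,dt\sum_{k>M}\frac{C_k^2}{k!},
\]
which tends to $0$ uniformly in $T$ without invoking \textbf{B6} (not among the hypotheses of the lemma).
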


\begin{proof}
Note that, uniformly in $T,$
\begin{equation}
E\left[ \eta _{T}^{\prime }(M)\right] ^{2}\longrightarrow 0,\quad M\longrightarrow \infty . \label{6.12}
\end{equation}%
\noindent Specifically, under {\bfseries B3}, with $u=0,$ for $k(0)
=(k_{1}(0),\ldots ,k_{q}(0)),$
\begin{eqnarray}
\left| R_{T}(t)\right| &\leq &\left|
\sum_{i=1}^{q}z_{i}v_{iT}(t,\theta )\right| \leq
T^{-1/2}\sum_{i=1}^{q}\left| z_{i}\right| k_{i}(0)\leq
T^{-1/2}\left\| z\right\| \cdot \left\| k(0)\right\| .\;
\label{6.13}
\end{eqnarray}
Therefore,
\begin{equation*}
E\left[ \eta _{T}^{\prime }(M)\right] ^{2}=
\end{equation*}%
\begin{equation*}
=E\left( \sum_{k=M+1}^{\infty }\frac{C_{k}}{k!}\int_{0}^{T}H_{k}(\xi
(t))R_{T}dt\right) ^{2}=\sum_{k=M+1}^{\infty }\frac{C_{k}^{2}}{k!}%
\int_{0}^{T}\int_{0}^{T}B^{k}(t-s)R_{T}(t)R_{T}(s)dtds
\end{equation*}
\begin{eqnarray*}
&\leq &\left\| z\right\| ^{2}\left\|k(0)\right\|^{2}\frac{1}{T}\int_{0}^{T}%
\int_{0}^{T}|B^{m}(t-s)|dtds\sum_{k=M+1}^{\infty }\frac{C_{k}^{2}}{k!} \\
&\leq &\left\| z\right\| ^{2}\left\|k(0)\right\|^{2}\int_{-\infty }^{\infty }|B^{m}(t)|dt\sum_{k=M+1}^{\infty
}\frac{C_{k}^{2}}{k!}=\pi (M)\longrightarrow 0,\;M\longrightarrow \infty ,
\end{eqnarray*}
\noindent since $\sum_{k=m}^{\infty }\frac{C_{k}^{2}}{k!}=EG^{2}(\xi
(0))<\infty .$ Thus, for any $\epsilon >0,$ uniformly in $T,$
$$P\left\{ \left| \eta _{T}^{\prime }(M)\right| >\epsilon \right\}
\leq \frac{\pi (M)}{\epsilon ^{2}}\longrightarrow 0,\quad
M\longrightarrow \infty .$$ We then obtain
\begin{eqnarray*}
P\left\{ \eta _{T}\leq x\right\} &=& P\left\{ \eta _{T}\leq x, |\eta
_{T}^{\prime }(M)|\leq \epsilon \right\} +P\left\{
\eta _{T}\leq x, |\eta_{T}^{\prime }(M)|>\epsilon \right\} \\
&\leq & P\left\{ \eta _{T}(M)+\eta _{T}^{\prime }(M)\leq x, |\eta
_{T}^{\prime }(M)|\leq \epsilon \right\}
+\frac{\pi (M)}{\epsilon^{2}} \\
&\leq &P\left\{ \eta _{T}(M)\leq x+\epsilon \right\} +\frac{\pi (M)}{%
\epsilon ^{2}}.
\end{eqnarray*}
Thus, we have the following
\begin{equation}
\overline{\lim_{T\rightarrow \infty }}P\left\{ \eta _{T}\leq
x\right\} \leq \Phi _{M}(x+\epsilon )+\frac{\pi (M)}{\epsilon ^{2}},
\label{6.14}
\end{equation}%
where $\Phi _{M}$ is the d.f. of $\eta (M)\sim N(0,\sigma_{M} ^{2}(z)).$

On the other hand,
\begin{eqnarray*}
P\left\{ \eta _{T}(M)\leq x-\epsilon \right\} &=& P\left\{ \eta
_{T}(M)+\epsilon \leq x, |\eta _{T}^{\prime }(M)|\leq \epsilon
\right\}+P\left\{ \eta _{T}(M)+\epsilon \leq x,\left| \eta
_{T}^{\prime
}(M)\right| >\epsilon \right\}\nonumber \\
&\leq & P\left\{ \eta _{T}\leq x\right\} +\frac{\pi (M)}{\epsilon
^{2}},
\end{eqnarray*}

\noindent or equivalently,
$$
P\left\{ \eta _{T}\leq x\right\} \geq P\left\{ \eta _{T}(M)\leq x-\epsilon\right\} - \frac{\pi (M)}{\epsilon ^{2}},$$
\noindent which leads to

\begin{equation}
\Phi _{M}(x-\epsilon )- \frac{\pi (M)}{\epsilon ^{2}}\leq \underline{\lim}_{T\longrightarrow \infty}P\left\{ \eta
_{T}\leq x\right\}.\label{6.15b}
\end{equation}

\noindent  Taking  the limit in (\ref{6.14}) and (\ref{6.15b}) in
$M,$ we obtain that, for any $\epsilon >0,$
\begin{equation*}
\Phi _{\infty}(x-\epsilon )\leq \underline{\lim}_{T\longrightarrow
\infty}P\left\{ \eta _{T}\leq x\right\} \leq
\overline{\lim_{T\longrightarrow \infty }}P\left\{ \eta _{T}\leq
x\right\} \leq \Phi_{\infty }(x+\epsilon ),
\end{equation*}%
where $\Phi _{\infty }$ is the d.f. of a normal random variable with zero mean and variance $\sigma ^{2}(z)$. As
$\epsilon \longrightarrow 0,$ we then have the desired result.
\end{proof}

We therefore need to prove that, for any integer $p\geq 2,$ and for fixed $%
M\geq m,$
\begin{equation}
\lim_{T\rightarrow \infty }\mathit{E}\eta _{T}^{p}(M)=\mathit{E}\eta
^{p}(M)=\left\{
\begin{array}{l}
(p-1)!!\sigma _{M}^{p}(z),\quad p=2\nu ,\quad \nu =1,2,\ldots , \\
0,\quad p=2\nu +1,\quad \nu =1,2,\ldots , \\
\end{array}%
\right.  \label{6.16}
\end{equation}

\noindent where $\eta (M)\sim N(0,\sigma _{M}^{2}(z)).$ The following
notation is considered:
\begin{equation*}
D_{p}=\left\{ J:J=\left( l_{1},\dots,l_{p}\right) ,\quad 1\leq l_{i}\leq
M,\quad i=1,\dots,p\right\} ,
\end{equation*}%
\begin{equation*}
K\left( J\right) =\prod_{i=1}^{p}\frac{C_{j_{i}}}{\left( j_{i}\right) !},
\quad \int^{\left( p\right) }\dots=\prod_{i=1}^{p}\int_{0}^{T}\dots
\end{equation*}

Note that by diagram formula (see Lemma \ref{lem2})%
\begin{equation*}
\mathit{E}\zeta _{T}^{p}(M)=E\left( \sum_{k=m}^{M}\frac{C_{k}}{k!}%
\int_{0}^{T}H_{k}\left( \xi \left( t\right) \right) R_{T}(t)dt\right) ^{p}=
\end{equation*}%
\begin{equation}
=\sum_{D_{p}}K\left( J\right) \int^{\left( p\right)
}\prod_{j=1}^{p}R_{T}(t_{j})\sum_{\Gamma \in \mathit{L}(J\mathrm{)}%
}\prod_{\varpi \in R\left( \Gamma \right) }B\left( t_{d_{1}\left( \varpi
\right) }-t_{d_{2}\left( \varpi \right) }\right) dt_{1}\dots dt_{p}.
\label{6.17}
\end{equation}

Let $\mathit{L}^{\ast }(J)$ be a set of regular diagrams. We split the sum
\begin{equation*}
\sum_{\Gamma \in \mathit{L}}...=\sum_{\Gamma \in \mathit{L}^{\ast
}}...+\sum_{\Gamma \in \mathit{L}\backslash \mathit{L}^{\ast }}...
\end{equation*}%
and denote
\begin{equation*}
\sum_{\Gamma \in \mathit{L}^{\ast }}...\equiv \sum_{p}^{\ast }\left(
T\right),\quad \sum_{\Gamma \in \mathit{L}\backslash \mathit{L}^{\ast
}}...\equiv \sum_{p}\left( T\right) .
\end{equation*}
\noindent We will study their behavior separately.

\medskip

\noindent \textit{Analysis of the regular diagrams:} \medskip

If $p=2\nu +1$ is odd, then $L^{\ast }=\emptyset ,$ $J\in D_{p},$ and $%
\lim_{T\rightarrow \infty }\sum_{p}^{\ast }\left( T\right) =0.$ If $p=2\nu ,$
for an arbitrary fix regular diagram $\Gamma \in L^{\ast }(J),$ $J\in D_{p},$
which has $2m_{j}$ levels of cardinality $r_{j},$ with $m\leq r(j)\leq M,$ $%
j=1,\dots ,l,$ $\sum\limits_{j=1}^{l}m_{j}=\nu ,$ where $1\leq l\leq \nu $
is fixed, and all $r(j),$ $j=1,\dots ,l,$ are different, we obtain that the
contribution to $\sum_{p}^{\ast }\left( T\right) $ is equal to
\begin{equation}
\prod_{j=1}^{l}\left( \frac{C_{r(j)}}{\left( r(j)\right) !}\right)
^{2m_{j}}\sigma _{T}^{2m_{j}}(r(j),z)\underset{T\longrightarrow \infty }{%
\longrightarrow }\prod_{j=1}^{l}\left( \frac{C_{r(j)}}{\left( r(j)\right) !}%
\right) ^{2m_{j}}\sigma ^{2m_{j}}(r(j),z).  \label{6.18}
\end{equation}

Note that the number of regular diagrams with $2m_{j}$ levels of cardinality
$r\left( j\right) $ , $j=1,\dots ,l,$ $\sum\limits_{j=1}^{l}m_{j}=\nu $ is
equal to
\begin{equation*}
\frac{(2\nu )!}{(2m_{1})!\cdot \cdot \cdot (2m_{l})!}\left\{
\prod_{j=1}^{l}(2m_{j}-1)(2m_{j}-3)\cdot \cdot \cdot 3\cdot 1\right\}
\left\{ \prod_{j=1}^{l}(r\left( j\right) !)^{m_{j}}\right\} =
\end{equation*}%
\begin{equation}
=\frac{\left( 2\nu -1\right) !!\left( \nu \right) !}{m_{1}!\dots m_{l}!}%
\prod_{j=1}^{l}(r\left( j\right) !)^{m_{j}}.  \label{6.19}
\end{equation}

From (\ref{6.17})-(\ref{6.19}), we obtain
\begin{equation*}
\sum_{p}^{\ast }\left( T\right) =\sum_{J\in D_{p}}K\left( J\right)
\int^{\left( p\right) }\prod_{j=1}^{p}R_{T}(t_{j})\sum_{\Gamma \in
L^{*}(J)}\prod_{\varpi \in R \left( \Gamma\right) }B\left( t_{d_{1}\left(
\varpi \right) }-t_{d_{2}\left( \varpi \right) }\right) dt_{1}\dots dt_{p}
\end{equation*}

\begin{eqnarray}
&=&\left( 2\nu -1\right) !!\sum_{1<l<\nu }\sum_{m_{1}+...+m_{l}=\nu
}\sum\limits_{\substack{ m\leq r(j)\leq M,  \\ 1\leq j\leq l}}\frac{\nu !}{%
m_{1}!...m_{l}!}\prod_{j=1}^{l}\left[ \frac{C_{r\left( j\right) }^{2}}{%
r\left( j\right) !}\sigma _{T}^{2}\left( r\left( j\right) ,z\right) \right]
^{m_{j}}=  \notag \\
&=&\left( p-1\right) !!\left[ \sum_{r\left( j\right) =m}^{M}\frac{C_{r\left(
j\right) }^{2}}{r\left( j\right) !}\sigma _{T}^{2}\left( r\left( j\right)
,z\right) \right] ^{\frac{p}{2}}\underset{T\longrightarrow \infty }{%
\longrightarrow }\left( p-1\right) !!\left[ \sum_{r\left( j\right) =m}^{M}%
\frac{C_{r\left( j\right) }^{2}}{r\left( j\right) !}\sigma ^{2}\left(
r\left( j\right) ,z\right) \right] ^{\frac{p}{2}}=  \notag
\end{eqnarray}%
\begin{equation}
=(p-1)!!\sigma _{M}^{p}(z).  \label{6.20}
\end{equation}%
\medskip

\noindent \textit{Analysis of the nonregular diagrams:}

\medskip

We now wish to prove that

\begin{equation}
\sum_{p}\left( T\right) =\sum_{J\in D_{p}}K\left( J\right) \sum_{\Gamma \in
\mathit{L}\backslash \mathit{L}^{\ast }}I_{\Gamma }(J,T)\longrightarrow 0,
\label{6.21}
\end{equation}%
\noindent where
\begin{equation*}
I_{\Gamma }(J,T)=\int^{\left( p\right)
}\prod_{j=1}^{p}R_{T}(t_{j})\prod_{\varpi \in R \left( \Gamma\right)
}B\left( t_{d_{1}\left( \varpi \right) }-t_{d_{2}\left( \varpi \right)
}\right) dt_{1}\dots dt_{p}.
\end{equation*}

Now, we assume that the diagram $\Gamma ^{\prime }\in \mathit{L}%
(l_{1}^{\prime },\dots ,l_{p}^{\prime })$ satisfies $l_{1}^{\prime }\leq
\dots \leq l_{p}^{\prime }.$ We then have
\begin{equation}
\left\vert I_{\Gamma }(J,T)\right\vert \leq \left\Vert z\right\Vert
^{p}\left\Vert k(0)\right\Vert ^{p}T^{-p/2}\int^{\left( p\right) }\prod
_{\substack{ \varpi \in R(\Gamma )  \\ d_{1}\left( \varpi \right) =j}}%
B\left( t_{j}-t_{d_{2}\left( \varpi \right) }\right) dt_{1}\dots dt_{p}.
\label{6.24}
\end{equation}%
Given a permutation $\pi $ of the set $\left( 1,\dots ,p\right) $ and the
diagram $\Gamma \in \mathit{L}(l_{1},\dots ,l_{p}),$ we define the diagram $%
\pi \Gamma $ in the following way: the $\pi (j)$ level of $\pi \Gamma $ has
cardinality $l_{j},j=1,\dots ,p,$ and $\varpi =((j_{1},k_{1}),\
(j_{2},k_{2}))\in R(\Gamma )$ if and only if $\pi (\varpi )=((\pi
(j_{1}),k_{1}),\ (\pi (j_{2}),k_{2}))\in \pi R(\Gamma ).$ Given a diagram $%
\Gamma \in \mathit{L}(l_{1},\dots ,l_{p}),$ we define the integer-valued
function $q_{\Gamma }(j)$ on the set $\{1,\dots ,p\}$ in the following way: $%
q_{\Gamma }(j)$ is the cardinality of the edges $\varpi \in R(\Gamma )$ such
that $d_{1}(\varpi )=j.$

Observe that for $\Gamma \in L(l_{1},\dots ,l_{p}),$ and $J=(l_{1},\dots
,l_{p})$
\begin{equation}
I_{\Gamma }\left( J,T\right) =I_{\pi \Gamma }\left( \pi J,T\right) ,\quad
\pi J=(l_{\pi ^{-1}(1)},\dots ,l_{\pi ^{-1}(p)}).  \label{6.22}
\end{equation}%
For all the diagrams $\Gamma ,$ there exists a permutation $\pi $ such that $%
\Gamma ^{\prime }=\pi \Gamma $ has the following property: $\Gamma ^{\prime
}\in \mathit{L}(l_{1}^{\prime },\dots ,l_{p}^{\prime })$ and
\begin{equation}
l_{1}^{\prime }\leq \dots \leq l_{p}^{\prime }.  \label{6.23}
\end{equation}

Then, for $q_{\Gamma }(j)\geq 1,$
\begin{equation*}
\int_{0}^{T}\prod_{\substack{ \varpi \in R(\Gamma )  \\ d_{1}\left( \varpi
\right) =j}}B\left( t_{j}-t_{d_{2}\left( \varpi \right) }\right) dt_{j}\leq
\frac{1}{q_{\Gamma }(j)}\sum\limits_{_{_{\substack{ \varpi \in R(\Gamma )
\\ d_{1}\left( \varpi \right) =j}}}}\int_{0}^{T}\left\vert
B(t_{j}-t_{d_{2}(\varpi )})\right\vert ^{q_{\Gamma }(j)}dt_{j}\leq
\end{equation*}%
\begin{equation}
\leq 2\int_{0}^{T}\left\vert B(t_{j})\right\vert ^{q_{\Gamma }(j)}dt_{j}.
\label{6.25}
\end{equation}%
\bigskip If $q_{\Gamma }(i)=0,$ the integrals regarded to these variables,
after (\ref{6.25}), give a contribution in the form of multiplier $T$ in the
estimate (\ref{6.24}).

\begin{definition}
The level $j$ of a nonregular diagram $\Gamma \in L\backslash L^{\ast }$ is
said to be a donor, if $q_{\Gamma }(j)$ $\geq 1,$ \ and a strong donor, if $%
q_{\Gamma }(j)=l_{j}.$The level $j$ of a nonregular diagram $\Gamma \in
L\backslash L^{\ast }$ is said to be a recipient, if it is not donor, that
is, $q_{\Gamma }(j)=0.$
\end{definition}

Let $\rho _{sd\text{ }}$be a number of strong donor levels, and $\rho _{r}$
is a number of recipient levels. Note that $\rho _{sd\text{ }},\rho _{r}\geq
1,$ since the level $1$ is a strong donor one, while the level $p$ is a
recipient one.

Formulae (\ref{6.24}) \ and (\ref{6.25}) now imply
\begin{equation}
\left\vert I_{\Gamma }(J,T)\right\vert \leq 2^{p-\rho _{r}}\left\Vert
z\right\Vert ^{p}\left\Vert k(0)\right\Vert
^{p}T^{-p/2}\prod\limits_{j=1}^{p}\int_{0}^{T}\left\vert B(t)\right\vert
^{q_{\Gamma }(j)}dt.  \label{6.26}
\end{equation}

\bigskip Since $l_{j}\geq m\geq 2,$ under \textbf{B6}, for a strong donor
level $j$%
\begin{equation}
\int_{0}^{T}\left\vert B(t)\right\vert ^{l_{j}}dt\leq \int_{0}^{\infty
}[B(t)]^{2}dt<\infty .  \label{4.27}
\end{equation}

Thus, for the recipient levels ($q_{\Gamma }(j)$ $=0),$ and the strong donor
levels ($q_{\Gamma }(j)$ $=l_{j}$), we obtain%
\begin{equation}
\int_{0}^{T}\left\vert B(t)\right\vert ^{q_{\Gamma }(j)}dt\leq
c_{0}T^{1-z(j)},  \label{6.29}
\end{equation}%
\noindent where $z(j)=\frac{q_{\Gamma }(j)}{l_{j}},$ $c_{0}=\max \left(
1,\int_{0}^{\infty }B(t)^{2}dt\right) .$ Let now $0<q_{\Gamma }(j)<l_{j},$
that is, a level $j$ is a donor one, but not strong donor. Since $%
\int_{0}^{\infty }\left\vert B(t)\right\vert ^{l_{j}}dt<\infty ,$ for any $%
\epsilon >0,$ there exists $T_{\epsilon },$ such that $\int_{T_{\epsilon
}}^{\infty }\left\vert B(t)\right\vert ^{l_{j}}dt<\epsilon .$ Hence, it
follows from the H{\"o}lder inequality ($\frac{1}{p}=z(j)=\frac{q_{\Gamma
}(j)}{l_{j}},\frac{1}{q}=1-z(j)),$ that for sufficiently large $T$%
\begin{equation*}
\int_{0}^{T}\left\vert B(t)\right\vert ^{q_{\Gamma
}(j)}dt=\int_{0}^{T_{\epsilon }}\left\vert B(t)\right\vert ^{q_{\Gamma
}(j)}dt+\int_{T_{\epsilon }}^{T}\left\vert B(t)\right\vert ^{q_{\Gamma
}(j)}dt\leq
\end{equation*}%
\begin{equation*}
\leq C(\epsilon )+\left( \int_{T_{\epsilon }}^{T}\left[ \left\vert
B(t)\right\vert ^{q_{\Gamma }(j)}\right] ^{\frac{l_{j}}{q_{\Gamma }(j)}%
}dt\right) ^{z(j)}T^{1-z(j)}\leq
\end{equation*}%
\begin{equation}
\leq C(\epsilon )+\epsilon ^{z(j)}T^{1-z(j)}=o(T^{1-z(j)}).  \label{6.30}
\end{equation}

Denoting by $\mu =\frac{p}{2}-\sum\limits_{j=1}^{p}z(j),$ formulae (\ref%
{6.29}) \ and (\ref{6.30}) together with (\ref{6.26}) lead to
\begin{equation}
\left\vert I_{\Gamma }(J,T)\right\vert =O(T^{\mu }),\quad T\longrightarrow
\infty ,  \label{6.31}
\end{equation}%
\noindent if all the levels of $\Gamma $ are strong donor and recipient, and
\begin{equation}
\left\vert I_{\Gamma }(J,T)\right\vert =o(T^{\mu }),\quad T\longrightarrow
\infty ,  \label{6.32}
\end{equation}%
\noindent if $0<q_{\Gamma }(j)<l_{j},$ for some $j.$

Let us show that $\mu \leq 0.$ Choose an edge $\varpi \in R(\Gamma ),$ and
define the numbers $p_{1}(\varpi )$ and $p_{2}(\varpi )$ as the
cardinalities of levels $d_{1}(\varpi )$ and $d_{2}(\varpi )$ respectively.
Observe that $p_{1}(\varpi )\leq p_{2}(\varpi ),$ for any $\varpi \in
R(\Gamma ).$ Taking into account the definition of the $z(j),$ we obtain%
\begin{equation}
2\sum\limits_{j=1}^{p}z(j)=2\sum\limits_{j=1}^{p}\frac{q_{\Gamma }(j)}{l_{j}}%
=2\sum\limits_{\varpi \in R(\Gamma )}\frac{1}{p_{1}(\varpi )}\geq
\sum\limits_{\varpi \in R(\Gamma )}\left[ \frac{1}{p_{1}(\varpi )}+\frac{1}{%
p_{2}(\varpi )}\right] =p,  \label{6.33}
\end{equation}%
because the term $1/l_{i}$ appears exactly $l_{i}$ times among the summands $%
1/p_{1}(\varpi )$ and $1/p_{2}(\varpi ).$ The following inequality then
holds
\begin{equation}
\sum\limits_{1\leq i\leq p}z(i)\geq \frac{p}{2},\quad \text{or }\quad \mu
\leq 0,  \label{6.34}
\end{equation}%
where there is a strict inequality if $\Gamma $ contains an edge connecting
levels of different cardinality.

Thus, if in $\Gamma =\Gamma \left( l_{1},\dots ,l_{p}\right) \in \mathit{L}%
\backslash \mathit{L}^{\ast },$ $l_{1}\leq \dots \leq l_{p},$ there is an
edge between levels of different cardinalities, and all the levels are
strongly donor or recipient ones, then, from (\ref{6.31}),
\begin{equation}
\left\vert I_{\Gamma }(J,T)\right\vert \longrightarrow 0,\quad
T\longrightarrow \infty ,  \label{6.35}
\end{equation}%
\noindent while if there is level $j$ such that $0<q_{\Gamma }(j)<l_{j},$
then (\ref{6.35}) follows from (\ref{6.32}) and (\ref{6.34}).

We assume now that all edges of a non-regular diagram $\Gamma =\Gamma \left(
l_{1},\dots,l_{p}\right) \in \mathit{L}\backslash \mathit{L}^{\ast },$ $%
l_{1}\leq \dots\leq l_{p},$ connect the levels of the same cardinality. To
complete the proof one can use the following observations: Let $i$ be the
first upper recipient level. If it got an edge from the one (strongly donor)
level upper it, then the integral on variable $t_{j}$ in the right hand size
of (\ref{6.26}) can be estimated by a constant, while the integral on the
variable corresponding to the above strongly donor level can be estimated by
$T.$ Then, one can remove these levels from the consideration. Thus, we can
consider the moment of order $(p-2)$ (instead of order $p$). Since the
diagram $r$ is nonregular, one can continue the above procedure until the
case where the recipient level got edges from more than one donor level
upper it.

Let $i$ be the first upper recipient level that has edges from at least two
donor's levels $j$ and $k$ upper it, $j<k<i,$ and $k$ is the nearest to $i$
donor level. Level $k$ does not give all edges to $i$.

Let us change $k$ and $i$, and denote this permutation by $\tilde{\pi}.$
Then, $\tilde{\pi}\ (k)=i,\ \tilde{\pi}(i)=k,\ \tilde{\pi}(i)<\ \tilde{\pi}%
(k),$ and from the level $\widetilde{\pi }(i)$ to $\widetilde{\pi }(k)$ will
enter less than $l_{i}=l_{k}=l$ edges. Moreover, $q_{\tilde{\pi}\Gamma }(%
\tilde{\pi}(i))=q_{\tilde{\pi}\Gamma }(k)<l,$ since the only down edges from
$\tilde{\pi}(i)$ are those connecting $\tilde{\pi}(i)$ with $\tilde{\pi}(k).$

Let nonregular diagram $\Gamma =\Gamma (l_{1},\ldots ,l_{p}),\ l_{1}\leq
\cdots \leq l_{p},$ does not contain any donor level $j$ such that $%
q_{\Gamma }(j)<l_{j}.$ Then, the following dichotomy holds: either $\Gamma $
connects the levels of different cardinalities, or there exists a
permutation of a strongly donor (say, $k$-th), and recipient levels, such
that $0<q_{\tilde{\pi}\Gamma }(k)<l,$ where $l$ is the joint cardinality of
both levels. Thus, we have proven (\ref{6.21}), i.e., the following
statement holds:

\begin{theorem}
\label{th5} Under conditions \textbf{A1}-\textbf{A4}, \textbf{B1}-\textbf{B3
}and \textbf{B6}, the random vector (\ref{6.1}) converges in distribution,
as $T\longrightarrow \infty ,$ to the Gaussian vector $N(0,\Sigma ),$ where
\begin{equation}
\Sigma =2\pi \sum\limits_{k=m}^{\infty }\frac{C_{k}^{2}}{k!}\int_{\mathbb{R}%
}f^{(\ast k)}(\lambda )\boldsymbol{\mu }(d\lambda ,\theta ),  \label{6.36}
\end{equation}%
with $\boldsymbol{\mu }(d\lambda ,\theta )$ being the spectral measure of
the regression function, and $f^{(\ast k)}(\lambda )$ being the $k$th
self-convolution of s.d. under assumption {\bfseries A1}.
\end{theorem}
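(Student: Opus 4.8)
The plan is to reduce the distributional convergence of $\zeta_T$ in (\ref{6.1}) to the convergence of characteristic functions along arbitrary one-dimensional projections, as already indicated in (\ref{6.2})--(\ref{6.3}), and then to establish the asymptotic normality of the scalar quantity $\langle\zeta_T,z\rangle$ by the method of moments. Fixing $z\in\mathbb{R}^q$, note that modulo the negligible tail $d_T^{-1}(\theta)\int_0^T\nabla g(t,\theta)\,(\varepsilon(t)-\text{truncation})\,dt$ controlled exactly as in Lemma \ref{lem4}, the projection equals $\eta_T=\sum_{k=m}^\infty \frac{C_k}{k!}\int_0^T H_k(\xi(t))R_T(t)\,dt$. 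By Lemma \ref{lem4} it suffices to prove, for each fixed truncation level $M\ge m$, that $\eta_T(M)\Rightarrow\eta(M)\sim N(0,\sigma_M^2(z))$ with $\sigma_M^2(z)$ given by (\ref{6.10}); and this in turn follows from the moment convergence (\ref{6.16}), since the limiting Gaussian law is determined by its moments. So the whole theorem rests on proving (\ref{6.16}).

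For (\ref{6.16}) I would expand $\mathit{E}\,\eta_T^p(M)$ via the diagram formula of Lemma \ref{lem2}, as in (\ref{6.17}), and split the sum over diagrams into the regular part $\sum_p^\ast(T)$ and the non-regular part $\sum_p(T)$. For the regular diagrams: when $p$ is odd, $L^\ast=\emptyset$, so that contribution vanishes; when $p=2\nu$, each regular diagram factors through pairings of levels of equal cardinality, its contribution is a product of the second-moment quantities $\sigma_T^2(r(j),z)$, which by (\ref{6.7}) converge to $\sigma^2(r(j),z)$, and counting the diagrams with the combinatorial identity (\ref{6.19}) assembles these into $(p-1)!!\,\sigma_M^p(z)$ as displayed in (\ref{6.20}). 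This part is essentially bookkeeping once (\ref{6.7}) (i.e. the weak convergence $\boldsymbol{\mu}_T\Rightarrow\boldsymbol{\mu}$ from \textbf{B6}, together with continuity and boundedness of the convolutions $f^{(\ast k)}$ guaranteed by \textbf{A4}) is in hand.

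The main obstacle, and the heart of the proof, is showing that the non-regular contribution $\sum_p(T)\to 0$, i.e. (\ref{6.21}). Here I would bound each $|I_\Gamma(J,T)|$ using (\ref{6.13}) to pull out the factor $T^{-p/2}$, then integrate out variables level by level: the inequality (\ref{6.25}) (a consequence of the elementary bound $\prod\le\frac1n\sum(\cdot)^n$ and symmetry) reduces each donor level's integral to $\int_0^T|B(t)|^{q_\Gamma(j)}\,dt$, while each recipient level contributes a factor $T$. Classifying levels into strong donors ($q_\Gamma(j)=l_j$), proper donors ($0<q_\Gamma(j)<l_j$) and recipients ($q_\Gamma(j)=0$), and using \textbf{A4} to get $\int_0^\infty|B(t)|^{l_j}\,dt\le\int_0^\infty B^2(t)\,dt<\infty$ for strong donors (since $l_j\ge m$), one arrives at the exponent bound $|I_\Gamma(J,T)|=O(T^\mu)$ with $\mu=\frac p2-\sum_j z(j)$, $z(j)=q_\Gamma(j)/l_j$. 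The key combinatorial estimate (\ref{6.33})--(\ref{6.34}), namely $\sum_j z(j)\ge p/2$ with strict inequality whenever some edge joins levels of unequal cardinality, then gives $\mu\le0$, and $\mu<0$ in the unequal-cardinality case; a proper-donor level forces the sharper $o(T^\mu)$ of (\ref{6.32}). The remaining delicate case is a non-regular diagram all of whose edges join levels of equal cardinality and with no proper-donor level: here I would argue by the peeling procedure sketched above, repeatedly removing a recipient level together with a strong donor above it (reducing to a moment of order $p-2$) until, since the diagram is non-regular, one reaches a recipient level fed by at least two donor levels; swapping that recipient with the nearest donor above it produces a permuted diagram with a proper-donor level, reducing this case to the previously handled one. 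This dichotomy closes the estimate, yields (\ref{6.21}), and hence (\ref{6.16}), completing the proof; the limiting covariance matrix is then read off from (\ref{6.7b})--(\ref{6.7}) as $\Sigma$ in (\ref{6.36}).
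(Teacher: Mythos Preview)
Your proposal is correct and follows essentially the same route as the paper: Cram\'er--Wold reduction to $\langle\zeta_T,z\rangle=\eta_T$, truncation via Lemma~\ref{lem4}, method of moments through the diagram formula, with the regular diagrams yielding $(p-1)!!\,\sigma_M^p(z)$ by the combinatorics of (\ref{6.19})--(\ref{6.20}) and the non-regular diagrams handled by the donor/recipient classification, the exponent bound $\mu\le 0$ from (\ref{6.33})--(\ref{6.34}), and the level-permutation argument for the equal-cardinality case. One cosmetic remark: the projection $\langle\zeta_T,z\rangle$ \emph{equals} $\eta_T$ exactly, so there is no ``modulo a negligible tail'' at that stage---the truncation $\eta_T=\eta_T(M)+\eta_T'(M)$ enters only afterwards, inside Lemma~\ref{lem4}.
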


Now we are able to prove the asymptotic normality of the LSE .

\begin{theorem}
Assume that conditions \textbf{A1}-\textbf{A4},\textbf{B1}-\textbf{B6 }and
\textbf{C} hold. Then, the random vector $\hat{u}_{T}=d_{T}(\theta )(\hat{%
\theta}_{T}-\theta )$ converges, in distribution, to the Gaussian vector $%
N(0,\Sigma _{0}),$ as $T\longrightarrow \infty ,$ where
\begin{equation}
\Sigma _{0}=2\pi \sum\limits_{k=m}^{\infty }\frac{C_{k}^{2}}{k!}\left( \int_{%
\mathbb{R}}\boldsymbol{\mu }(d\lambda ,\theta )\right) ^{-1}\left( \int_{%
\mathbb{R}}f^{(\ast k)}(\lambda )\boldsymbol{\mu }(d\lambda ,\theta )\right)
\left( \int_{\mathbb{R}}\boldsymbol{\mu }(d\lambda ,\theta )\right) ^{-1}.
\label{6.37}
\end{equation}
\end{theorem}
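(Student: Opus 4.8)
The plan is to reduce the nonlinear system of normal equations for $\hat{u}_{T}$ to its linearization, to solve the resulting \emph{linear} system in closed form, and then to read off the limit law from the central limit theorem for the score vector $\zeta_{T}$. The starting observation is that, by (\ref{4.8}), (\ref{5.1}) and (\ref{6.1}), the map governing the linearized problem is affine,
\begin{equation*}
L_{T}(u)=\zeta_{T}-J_{T}(\theta )u ,
\end{equation*}
so $L_{T}(u)=0$ has the single root $\tilde{u}_{T}=J_{T}^{-1}(\theta )\zeta_{T}$, the matrix $J_{T}(\theta )$ being invertible with $\Vert J_{T}^{-1}(\theta )\Vert \leq \lambda_{\ast }^{-1}$ by \textbf{B4}. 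Since Theorem \ref{th5} gives $\zeta_{T}\Rightarrow N(0,\Sigma )$ with $\Sigma$ as in (\ref{6.36}), in particular $\zeta_{T}=O_{P}(1)$, the linear root is stochastically bounded: $\tilde{u}_{T}=O_{P}(1)$. Fix a radius $R$ so that $\tilde{u}_{T}$ lies in the ball $V(R)$ with probability as close to $1$ as desired, for all $T$ large.

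Next I would invoke the linearization Theorem \ref{th228}: on $V^{c}(R)$, $\Vert \psi_{T}(u)-L_{T}(u)\Vert \to 0$ in probability, uniformly in $u$. On the event where this supremum is small and $\tilde{u}_{T}$ lies well inside $V(R)$, a standard topological (degree / fixed-point) argument, comparing $\psi_{T}$ with the affine map $L_{T}$ whose zero $\tilde{u}_{T}$ sits in the interior, shows that $\psi_{T}$ itself has a zero in $V(R)$. By Theorem \ref{uniquiness} (which uses condition \textbf{C}), with probability tending to $1$ the normalized LSE $\hat{u}_{T}$ is the \emph{unique} solution of $\psi_{T}(u)=0$; hence that zero is $\hat{u}_{T}$, so $\hat{u}_{T}\in V^{c}(R)$ with probability tending to $1$. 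On that event $\Vert L_{T}(\hat{u}_{T})\Vert =\Vert L_{T}(\hat{u}_{T})-\psi_{T}(\hat{u}_{T})\Vert \leq \sup_{u\in V^{c}(R)}\Vert \psi_{T}(u)-L_{T}(u)\Vert =o_{P}(1)$, i.e. $\zeta_{T}-J_{T}(\theta )\hat{u}_{T}=o_{P}(1)$, and multiplying by the uniformly bounded $J_{T}^{-1}(\theta )$ gives $\hat{u}_{T}=\tilde{u}_{T}+o_{P}(1)$.

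It remains to pass to the limit. By \textbf{B6}, $J_{T}(\theta )\to J(\theta )=\int_{\mathbb{R}}\boldsymbol{\mu }(d\lambda ,\theta )$, a nonsingular symmetric matrix with $\lambda_{\min }(J(\theta ))\geq \tilde{\lambda}_{\ast }>0$, so $J_{T}^{-1}(\theta )\to J^{-1}(\theta )$. Combining with $\zeta_{T}\Rightarrow N(0,\Sigma )$ and Slutsky's theorem, $\tilde{u}_{T}=J_{T}^{-1}(\theta )\zeta_{T}\Rightarrow J^{-1}(\theta )\zeta \sim N\!\left(0,J^{-1}(\theta )\Sigma J^{-1}(\theta )\right)$; substituting $\Sigma$ from (\ref{6.36}) and $J(\theta )=\int_{\mathbb{R}}\boldsymbol{\mu }(d\lambda ,\theta )$ reproduces exactly the matrix $\Sigma_{0}$ of (\ref{6.37}). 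One last application of Slutsky to $\hat{u}_{T}=\tilde{u}_{T}+o_{P}(1)$ finishes the proof.

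The step I expect to be the real obstacle is the localization in the middle paragraph: Theorem \ref{th228} supplies the linearization only uniformly on a \emph{fixed} ball, whereas the a priori control of the LSE is most naturally expressed at the coarser normalization $w=T^{-1/2}d_{T}(\theta )(\tau -\theta )$ used in Section 5 (condition \textbf{C} by itself only yields $\hat{u}_{T}=o_{P}(T^{1/2})$). Bridging these two scales — so that one may actually place $\hat{u}_{T}$ inside some fixed $V^{c}(R)$ — is precisely what makes the asymptotic uniqueness Theorem \ref{uniquiness} indispensable here; once $\hat{u}_{T}$ has been so localized, the remainder is the routine linear-algebra-plus-Slutsky chain above.
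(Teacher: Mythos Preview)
Your proposal is correct and follows essentially the same route as the paper: identify $\tilde u_{T}=J_{T}^{-1}(\theta)\zeta_{T}$ as the solution of the linearized system, use Theorem~\ref{th228} together with a Brouwer fixed-point argument (the paper applies it to $F_{T}(u)=u+J_{T}^{-1}(\theta)\psi_{T}(u)$ on $V^{c}(R)$) and the asymptotic uniqueness Theorem~\ref{uniquiness} to force $\hat u_{T}\in V^{c}(R)$ and hence $\hat u_{T}-\tilde u_{T}=o_{P}(1)$, then read off the limit from Theorem~\ref{th5}. The only notable difference is the final passage: you invoke Slutsky directly, whereas the paper carries out an explicit distribution-function sandwich using a Bhattacharya--Ranga Rao smoothing lemma; your Slutsky argument is shorter and perfectly adequate here.
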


\begin{proof}
In the notation of Sections 4 and 5, we obtain
\begin{equation}
L_{T}^{i}(u)=\int_{0}^{T}\left( G(\xi
(t))-\sum\limits_{l=1}^{q}v_{lT}(t,\theta )u_{l}\right)
v_{iT}(t,\theta )dt=0,\quad i=1,\ldots ,q, \label{6.38}
\end{equation}%
\noindent or equivalently,
\begin{eqnarray*}
\int_{0}^{T}G(\xi (t))v_{iT}(t,\theta )dt
&=&\int_{0}^{T}\sum\limits_{l=1}^{q}v_{lT}(t,\theta )u_{l}v_{iT}(t,\theta )dt, \\
\sum\limits_{l=1}^{q}\int_{0}^{T}v_{lT}(t,\theta )v_{iT}(t,\theta
)dtu_{l} &=&\sum\limits_{l=1}^{q}J_{il,T}(\theta
)u_{l}=\int_{0}^{T}G(\xi (t))v_{iT}(t,\theta )dt.
\end{eqnarray*}

Thus, we have a system of equations regarded to $u:$ $J_{T}(\theta
)u=d_{T}^{-1}(\theta )\int\limits_{0}^{T}G(\xi (t))\nabla g(t,\theta
)dt,$ or $\widetilde{u}_{T}=(\widetilde{u}_{1},\ldots
,\widetilde{u}_{q})^{\prime }=\Lambda _{T}(\theta
)\int\limits_{0}^{T}G(\xi (t))d_{T}^{-1}(\theta )\nabla g(t,\theta
)dt=\Lambda _{T}(\theta )\zeta _{T},$
 where $\Lambda _{T}(\theta )=J_{T}^{-1}(\theta ),$ $d_{T}(\theta
)=diag(d_{iT}(\theta ))_{i=1}^{q},$ $\nabla g(t,\theta )=\left(
g_{1}(t,\theta ),\ldots ,g_{q}(t,\theta )\right) ^{\prime }.$

From  Theorem \ref{th5}, the vector $$\tilde{u}_{T}=\Lambda
_{T}(\theta )\left( \int_{0}^{T}\varepsilon (t)v_{iT}(t,\theta
)dt\right) _{i=1}^{q}$$ \noindent is asymptotically normal. To
compute the limiting covariance matrix, we note that the covariance
matrix of the vector $\tilde{u}_{T}$  has the form $ \Sigma
_{0T}=\Lambda _{T}(\theta )\sigma _{T}^{2}(\theta )\Lambda
_{T}(\theta ),$ where $\sigma _{T}^{2}(\theta )$ is covariance
matrix of the vector $\zeta _{T}.$ As $T\longrightarrow \infty ,$
\begin{eqnarray}
\Sigma _{0T} &=&2\pi \sum\limits_{k=m}^{\infty }\frac{C_{k}^{2}}{k!}\left(
\int_{\mathbb{R}}\boldsymbol{\mu }_{T}(d\lambda ,\theta )\right) ^{-1}\left( \int_{%
\mathbb{R}}f^{(\ast k)}(\lambda )\boldsymbol{\mu }_{T}(d\lambda
,\theta )\right) \left(
\int_{\mathbb{R}}\boldsymbol{\mu } _{T}(d\lambda ,\theta )\right) ^{-1}  \label{6.12b} \\
&\longrightarrow & 2\pi \sum\limits_{k=m}^{\infty }\frac{C_{k}^{2}}{k!}%
\left( \int_{\mathbb{R}}\boldsymbol{\mu } (d\lambda ,\theta )\right) ^{-1}\left( \int_{%
\mathbb{R}}f^{(\ast k)}(\lambda )\boldsymbol{\mu } (d\lambda ,\theta )\right) \left( \int_{%
\mathbb{R}}\boldsymbol{\mu } (d\lambda ,\theta )\right) ^{-1}=\Sigma
_{0}. \nonumber
\end{eqnarray}

We need to prove that the d.f. $F_{T}(y,\theta )$ of the vector
$\hat{u}_{T}=d_{T}(\theta )(\hat{\theta}_{T}-\theta )$ converges to
the Gaussian  d.f. $\Phi _{0,\Sigma _{0}}(y)$ as $T\longrightarrow
\infty .$

Then, we will show that, for any $r>0,$
\begin{equation}
\Delta _{T}(r)=P\left\{ \left\Vert \hat{u}_{T}-\tilde{u}_{T}\right\Vert
>r\right\} \longrightarrow 0,\ T\longrightarrow \infty .  \label{6.13b}
\end{equation}

Denote the event $A_{T}=\left\{ \tilde{u}_{T}\in
V^{c}(R-r)\right\},$ where $R$ is such that, for $T>T_{0},$
$P(\overline{A}_{T})<\frac{\epsilon }{2},$ for a fixed $\epsilon
>0.$  This follows from the asymptotic normality of $\tilde{u}_{T}.$
Introduce one more event $B_{T}=\left\{ \sup_{u\in
V^{c}(R)}\left\Vert \Lambda _{T}(\theta )\left( \psi_{T}(u)-L
_{T}(u)\right) \right\Vert \right\} \leq r.$ From  Theorem
\ref{uniquiness}, we obtain that, for $T>T_{0},$
\begin{eqnarray*}
P(B_{T}) &=& P\left\{ \sup_{u\in V^{c}(R)}\left\Vert \Lambda
_{T}(\theta )\left( \psi _{T}(u)-L_{T}(u)\right) \right\Vert
>r\right\} \leq P\left\{ \lambda _{\max }(\Lambda _{T}(\theta
))\sup_{u\in V^{c}(R)}\left\Vert \psi
_{T}(u)-L_{T}(u)\right\Vert >r\right\} \\
&=&P\left\{ \frac{1}{\lambda _{\min }(J_{T}(\theta ))}\sup_{u\in
V^{c}(R)}\left\Vert \psi _{T}(u)-L_{T}(u)\right\Vert >r\right\} \leq
P\left\{ \frac{1}{\lambda _{\ast }}\sup_{u\in V^{c}(R)}\left\Vert
\psi
_{T}(u)-L_{T}(u)\right\Vert >r\right\} \\
&=& P\left\{ \sup_{u\in V^{c}(R)}\left\Vert \psi
_{T}(u)-L_{T}(u)\right\Vert
>\lambda _{\ast }r\right\} \leq \frac{\epsilon }{3}.
\end{eqnarray*}

Introduce also the event $\ C_{T}=\left\{ \text{LSE
}\hat{u}_{T}\text{ is  unique solution of the system
(\ref{4.9})}\right\}.$ From Theorem 3, consider  $T>T_{0}$ such that
$\ P\left\{ \overline{C}_{T}\right\} \leq \frac{\epsilon }{3}.$
Thus, for $T>T_{0},$
\begin{equation}
P(A_{T}\cap B_{T}\cap C_{T})\geq 1-\epsilon . \label{6.14b}
\end{equation}
\noindent Then,
\begin{eqnarray*}
\Lambda _{T}(\theta )L_{T}(u) &=&\Lambda _{T}(\theta )\left(
\int_{0}^{T}\varepsilon (t)v_{iT}(t,\theta )dt\right)
_{i=1}^{q}-\Lambda _{T}(\theta )\left(
\sum\limits_{l=1}^{q}u_{l}\int_{0}^{T}v_{lT}(t,\theta )v_{iT}(t,\theta )dt\right) _{i=1}^{q} \\
&=&\tilde{u}_{T}-\Lambda _{T}(\theta )\left(
\sum\limits_{l=1}^{q}u_{l}J_{il,T}(\theta )\right) _{i=1}^{q}=\tilde{u}%
_{T}-u.
\end{eqnarray*}

For $u\in V^{c}(R),$ under event $A_{T}\cap B_{T}\cap C_{T},$ we
have
\begin{eqnarray*}
\left\Vert u+\Lambda _{T}(\theta )\psi _{T}(u)\right\Vert &=&
\left\Vert u+\Lambda
_{T}(\theta )(\psi _{T}(u)-L_{T}(\theta ))+\Lambda _{T}(\theta )L_{T}(u)\right\Vert \\
&=&\left\Vert u+\tilde{u}_{T}-u+\Lambda _{T}(\theta )(\psi
_{T}(u)-L_{T}(\theta ))\right\Vert \leq \left\Vert
\tilde{u}_{T}\right\Vert +\left\Vert \Lambda _{T}(\theta )(\psi
_{T}(u)-L_{T}(\theta )\right\Vert \nonumber\\ &\leq & R-r+r=R,
\end{eqnarray*}%
\noindent that is, $$F_{T}(u)=u+\Lambda _{T}(\theta )(\psi
_{T}(u)):\ V^{c}(R)\rightarrow V^{c}(R)$$ is a continuous map.

To prove (\ref{6.13b}) we will apply   Fix Point Brouwer Theorem
(Milnor (1965), p. 14). Specifically, if $F:\ V^{c}(R)\rightarrow
V^{c}(R)$ is a continuous map, then, there exists $x_{0}\in
V^{c}(R)$ such that $F(x_{0})=x_{0}.$
From Brouwer Theorem, there exists $u_{T}^{0}\in \ V^{C}(R)$ such that $%
F_{T}(u_{T}^{0})=u_{T}^{0},$ and hence, $\psi _{T}(u_{T}^{0})=0,$ since $%
\Lambda _{T}(\theta )$ is non degenerated.

Under $C_{T},$ the normed LSE $\hat{u}_{T}$ is the unique solution
to the equation $$\psi _{T}(u)=0,\quad  u\in V^{c}(R).$$

Thus, $A_{T}\cap B_{T}\cap C_{T}\subset \left\{ \hat{u}_{T}\in
V^{c}(R)\right\}$ and $P\left\{ \hat{u}_{T}\in V^{c}(R)\right\} \geq
1-\epsilon .$ From (\ref{6.14b}), we get
\begin{eqnarray}
1-\epsilon &\leq &P\left\{ \left\{ \hat{u}_{T}\in V^{c}(R)\right\}
\cap B_{T}\right\} =P\left\{ \left\{  \hat{u}_{T}\in
V^{c}(R)\right\} \cap \left\{ \sup_{u\in V^{c}(R)}\left\Vert \Lambda
_{T}(\theta )(\psi
_{T}(u)-L_{T}(u))\right\Vert \leq r\right\} \right\}  \nonumber\\
&\leq & P\left\{ \left\Vert \Lambda _{T}(\theta )(\psi
_{T}(\hat{u})-L_{T}(\hat{u}))\right\Vert \leq r\right\} =P\left\{
\left\Vert \Lambda _{T}(\theta )L_{T}(\widehat{u})\right\Vert \leq
r\right\} =P\left\{ \left\Vert
\tilde{u}_{T}-\hat{u}_{T}\right\Vert \leq r\right\} .\nonumber\\
\label{6.15c}
\end{eqnarray}%
Therefore, (\ref{6.13b}) follows from (\ref{6.15c}).

Let us consider the notation
\begin{equation*}
\Pi (-\infty ;y\pm \vec{\epsilon})=(-\infty ;y_{1}\pm \epsilon
)\times \cdots \times (-\infty ;y_{q}\pm \epsilon ),\quad \epsilon
\geq 0.
\end{equation*}%
For the d.f. $F_{T}(y,\theta )=P\left\{ \tilde{u}_{T}\in \Pi
(-\infty ;y)\right\},$ we obtain from (\ref{6.13b}),
\begin{equation}
F_{T}(y,\theta )\geq P\left\{ \tilde{u}_{T}\in \Pi (-\infty ;y-\vec{%
\epsilon})\right\} -\Delta _{T}(\epsilon ),\
F_{T}(y,\theta )\leq P\left\{ \tilde{u}_{T}\in \Pi (-\infty ;y-\vec{%
\epsilon})\right\} +\Delta _{T}(\epsilon ),  \label{6.18b}
\end{equation}%
\noindent for any $y\in \mathbb{R}^{q},$ and $\epsilon >0.$ We know
that
\begin{equation}
\left\vert P\left\{ \tilde{u}_{T}\in \Pi (-\infty ;y\pm \vec{\varepsilon}%
)\right\} -\Phi _{0,\gamma (\theta )}(y\pm
\vec{\varepsilon})\right\vert \longrightarrow 0,\quad
T\longrightarrow \infty . \label{6.19b}
\end{equation}%
Let $\phi (y,\theta )$ be the probability density function of a
Gaussian  random variable with d.f.
 $\Phi_{0,\gamma (\theta )}(y).$ Since $\lambda _{\min }(\gamma (\theta
))=\underline{\lambda }>0,$ $\lambda _{\max }(\gamma (\theta
))=\bar{\lambda}<\infty ,$ then
\begin{equation*}
\phi (y,\theta )\leq (2\pi \underline{\lambda })^{-q/2}\exp \left\{ \frac{%
-\left\Vert y\right\Vert ^{2}}{2\bar{\lambda}}\right\} =\nu (\left\Vert
y\right\Vert ).
\end{equation*}

For $\mathcal{A}\in \mathcal{B}^{q},$ with $\mathcal{B}^{q}$ being
the $\sigma $-algebra of Borel sets of $\mathbb{R}^{q},$ and for
$\epsilon >0,$ let $$A_{\epsilon }=\left\{ x\in \mathbb{R}^{q}:\inf_{y\in \mathcal{A}%
}\left\Vert x-y\right\Vert <\epsilon \right\} ,\quad  A_{-\epsilon }=%
\mathbb{R}^{q}\backslash (\mathbb{R}^{q}\backslash A)_{\epsilon }.$$
If $A=\Pi (-\infty ;y),$ then, $A_{-\epsilon }=\Pi (-\infty ;y-\vec{%
\epsilon}),\ \Pi (-\infty ;y+\vec{\epsilon})_{-\epsilon
}=\mathbb{R}^{q}\backslash\Pi (-\infty ;y)=A^{c}$. We will apply
Theorem {\S}3 of Bhattacharya and Ranga Rao (1976).

\begin{lemma}
\label{lemcof}
Let $\nu $ be a non-negative differential function on $%
[0,\infty ),$ such that
\begin{itemize}
\item[(1)] $b=\int_{0}^{\infty }\left\vert \nu ^{\prime }(\lambda
)\right\vert \lambda ^{q-1}d\lambda <\infty ;$

\item[(2)] $\lim_{\lambda \rightarrow \infty }\nu (\lambda )=0.$
\end{itemize}

Then for any convex $C\in \mathcal{B}^{q}$ and given $\epsilon
,\delta
>0 $, we have%
\begin{equation*}
\int_{C_{\epsilon }\backslash C_{-\delta }}\nu \left(\left\Vert \lambda
\right\Vert\right)  d\lambda \leq b\left( \frac{2\pi ^{q/2}}{\Gamma (\frac{q}{2})}%
\right) (\epsilon +\delta ).
\end{equation*}
\end{lemma}
From Lemma \ref{lemcof}, for any $\psi \neq 0,$ we have%
\begin{equation*}
\left\vert \Phi _{0,\gamma (\theta )}(y)-\Phi _{0,\gamma (\theta )}(y+\vec{%
\psi})\right\vert =\int_{\Pi }\phi (y,\theta )dy\leq b\left(
\frac{2\pi ^{q/2}}{\Gamma (\frac{q}{2})}\right) \left\vert \psi
\right\vert ,
\end{equation*}%
where $\Pi =\left\{
\begin{array}{cc}
\Pi (-\infty ,y+\vec{\psi})\backslash A^{\small c}, & \psi >0, \\
A\backslash A_{\psi },\ \ \ \ \ \ \ \ \ \ \ \ \ \ \ \ \  & \psi <0.%
\end{array}%
\right.$

For any $y\in \mathbb{R}^{q},$ and $\epsilon >0,$
\begin{eqnarray*}
F_{T}(y,\theta )-\Phi _{0,\gamma (\theta )}(y) &\leq &\Delta
_{T}(\epsilon )+P\left\{ \tilde{u}_{T}\in \Pi (-\infty ;y+\vec{\epsilon%
})\right\} -\Phi _{0,\gamma (\theta )}(y) \\
&\leq &\Delta _{T}(\epsilon )+\left\vert P\left\{ \tilde{u}_{T}\in
\Pi (-\infty ;y+\vec{\epsilon})\right\} -\Phi _{0,\gamma (\theta
)}(y)\right\vert \\
&\leq &\Delta _{T}(\epsilon )+\left\vert P\left\{ \tilde{u}_{T}\in
\Pi
(-\infty ;y+\vec{\epsilon})\right\} -\Phi _{0,\gamma (\theta )}(y+\vec{%
\epsilon})\right\vert +\left\vert \Phi _{0,\gamma (\theta )}(y+\vec{%
\epsilon})-\Phi _{0,\gamma (\theta )}(y)\right\vert ;
\end{eqnarray*}

\begin{eqnarray*}
\Phi _{0,\gamma (\theta )}(y)-F_{T}(y,\theta ) &\leq &\Delta
_{T}(\epsilon )-P\left\{ \tilde{u}_{T}\in \Pi (-\infty ;y-\vec{\epsilon%
})\right\} +\Phi _{0,\gamma (\theta )}(y) \\
&\leq &\Delta _{T}(\epsilon )+\left\vert \Phi _{0,\gamma (\theta
)}(y)-P\left\{ \tilde{u}_{T}\in \Pi (-\infty
;y-\vec{\epsilon})\right\}
\right\vert \\
&\leq &\Delta _{T}(\epsilon )+\left\vert \Phi _{0,\gamma (\theta )}(y-%
\vec{\epsilon})-P\left\{ \tilde{u}_{T}\in \Pi (-\infty ;y-\vec{\epsilon%
})\right\}\right\vert +\left\vert \Phi _{0,\gamma (\theta )}(y)-\Phi
_{0,\gamma (\theta )}(y-\vec{\epsilon})\right\vert .
\end{eqnarray*}

Therefore, we have
\begin{equation*}
\left\vert F_{T}(y,\theta )-\Phi _{0,\gamma (\theta )}(y)\right\vert
\longrightarrow 0,\ T\longrightarrow \infty .
\end{equation*}
Thus, Theorem 5 is proven.
\end{proof}

\section{\textbf{Asymptotic normality of the LSE of the parameters of
trigonometric regression }}

The asymptotic Gaussian distribution of the LSE in the Walker sense of the
regression function (\ref{nonregr2}) is established in the following result.

\begin{theorem}
\label{th6} Under conditions \textbf{A1}-\textbf{A4}, the LSE in the Walker
sense of the function (\ref{nonregr2}) of unknown parameter is
asymptotically normal, that is, the vector
\begin{equation*}
\left( T^{1/2}(\hat{A}_{kT}-A),T^{1/2}(\hat{B}_{kT}-B),T^{3/2}(\hat{\varphi}%
_{kT}-\varphi ),\ k=1,\ldots ,N\right)
\end{equation*}
\noindent converges weakly to the multidimensional normal vector $%
N_{3N}(0,\Gamma ),$ where the matrix $\Gamma >0$ is of the form $\Gamma
=diag\left( \Gamma _{k}\right) _{k=1}^{N},$ with
\begin{equation}
\Gamma _{k}=\frac{4\pi }{A_{k}^{2}+B_{k}^{2}}\sum_{j=m}^{\infty }\frac{%
C_{j}^{2}}{j!}f^{(\ast j)}(\varphi _{k})\left(
\begin{array}{ccc}
A_{k}^{2}+B_{k}^{2} & -3A_{k}B_{k} & -6B_{k} \\
-3A_{k}B_{k} & A_{k}^{2}+B_{k}^{2} & 6A_{k} \\
-6B_{k} & 6A_{k} & 12%
\end{array}%
\right) .  \label{matrixgamma}
\end{equation}
\noindent Here, $f^{(\ast j)}(\lambda ),$ $\lambda \in \mathbb{R},$ is the $%
j $-th convolution of the spectral density given under assumption {\bfseries %
A1}.
\end{theorem}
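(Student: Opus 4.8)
The plan is to specialize the general asymptotic-normality result of Theorem 5 to the trigonometric regression function (\ref{nonregr2}) and then to make the limiting covariance (\ref{6.37}) explicit. First I would record, with $\omega=\varphi_k$, the derivatives $\partial g/\partial A_k=\cos\varphi_kt$, $\partial g/\partial B_k=\sin\varphi_kt$, $\partial g/\partial\varphi_k=t(B_k\cos\varphi_kt-A_k\sin\varphi_kt)$, noting that the second derivatives are $O(t^2)$ trigonometric polynomials, so \textbf{B1} holds. From $\int_0^T\cos^2\varphi_kt\,dt=T/2+O(1)$, $\int_0^T\sin^2\varphi_kt\,dt=T/2+O(1)$ and $\int_0^Tt^2(B_k\cos\varphi_kt-A_k\sin\varphi_kt)^2dt=(A_k^2+B_k^2)T^3/6+O(T^2)$ one gets $d_{A_kT}^2(\theta)\sim T/2$, $d_{B_kT}^2(\theta)\sim T/2$, $d_{\varphi_kT}^2(\theta)\sim(A_k^2+B_k^2)T^3/6$; this is \textbf{B2}, and it accounts for the normalizations $T^{1/2},T^{1/2},T^{3/2}$ in the statement. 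Conditions \textbf{B3}--\textbf{B5} are straightforwardly checked for (\ref{nonregr2}) (as remarked above), and \textbf{C} follows from the consistency Theorem 1, since $T^{-1/2}d_{A_kT}(\theta)(\hat A_{kT}-A_k)\sim2^{-1/2}(\hat A_{kT}-A_k)\overset{P}{\to}0$ and $T^{-1/2}d_{\varphi_kT}(\theta)(\hat\varphi_{kT}-\varphi_k)\sim((A_k^2+B_k^2)/6)^{1/2}T(\hat\varphi_{kT}-\varphi_k)\overset{P}{\to}0$, and similarly for $B_k$.

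The substantial step is to verify \textbf{B6} and to identify the spectral measure $\boldsymbol\mu(d\lambda,\theta)$ of (\ref{nonregr2}). Each $g_T^j(\lambda,\theta)$ in (\ref{regmes2}) is, up to a $B$-free scalar factor, a linear combination of $D_T(\lambda\mp\varphi_k)$ and $D_T'(\lambda\mp\varphi_k)$, where $D_T(s)=\int_0^Te^{its}dt$ and $\int_0^Tte^{its}dt=-iD_T'(s)$; since $|D_T(s)|^2$ and $|D_T'(s)|^2$ are Fejér-type kernels concentrating at $s=0$, after the normalization in (\ref{regmes1}) the measure $\boldsymbol\mu_T(d\lambda,\theta)$ concentrates near $\{\pm\varphi_1,\dots,\pm\varphi_N\}$; moreover, for two parameters belonging to distinct frequencies $\varphi_a\ne\varphi_b$ the total mass $\mu_T^{jl}(\mathbb R,\theta)=J_{jl,T}(\theta)\to0$ (exactly as in (\ref{3.26})--(\ref{3.27}) and its analogue for the $\varphi$-derivatives). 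Hence $\boldsymbol\mu_T\Rightarrow\boldsymbol\mu$, a block-diagonal atomic matrix measure whose $k$-th block $\boldsymbol\mu^{(k)}$ is supported on $\{\pm\varphi_k\}$ with $\boldsymbol\mu^{(k)}(\{\varphi_k\})+\boldsymbol\mu^{(k)}(\{-\varphi_k\})=J_k$, the limit of the $k$-th diagonal $3\times3$ block of $J_T(\theta)$, evaluated from integrals such as $\int_0^Tt\cos^2\varphi_kt\,dt=T^2/4+O(T)$ and $\int_0^Tt\cos\varphi_kt\sin\varphi_kt\,dt=O(T)$. Since $B(t)$ is real and even, $f^{(\ast j)}$ is real and even, so $\int_{\mathbb R}f^{(\ast j)}(\lambda)\boldsymbol\mu(d\lambda,\theta)=\mathrm{diag}\bigl(f^{(\ast j)}(\varphi_k)J_k\bigr)_{k=1}^N$, irrespective of how the atomic mass of $\boldsymbol\mu^{(k)}$ splits between $+\varphi_k$ and $-\varphi_k$.

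With \textbf{A1}--\textbf{A4}, \textbf{B1}--\textbf{B6} and \textbf{C} established, Theorem 5 yields $\hat u_T=d_T(\theta)(\hat\theta_T-\theta)\Rightarrow N(0,\Sigma_0)$ with $\Sigma_0$ given by (\ref{6.37}); by the block structure $\Sigma_0=\mathrm{diag}(\Sigma_0^{(k)})_{k=1}^N$ with $\Sigma_0^{(k)}=2\pi\bigl(\sum_{j\ge m}\frac{C_j^2}{j!}f^{(\ast j)}(\varphi_k)\bigr)J_k^{-1}$. Finally, by the $d_{iT}$-asymptotics above and $o(1)\cdot O_P(1)=o_P(1)$, the vector of the statement equals, up to $o_P(1)$, the image of $\hat u_T$ under the block-diagonal rescaling $\mathrm{diag}\bigl(2^{1/2},2^{1/2},(6/(A_k^2+B_k^2))^{1/2}\bigr)_{k=1}^N$; conjugating $\Sigma_0^{(k)}$ by this factor and inserting the explicit $J_k$ and $J_k^{-1}$ produces the matrix $\Gamma_k$ of (\ref{matrixgamma}). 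Positive-definiteness of $\Gamma=\mathrm{diag}(\Gamma_k)$ follows since each $\Gamma_k$ is congruent to the positive-definite matrix $J_k^{-1}$.

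The main obstacle is the rigorous proof of \textbf{B6} for (\ref{nonregr2}): controlling the kernels $D_T$ and $D_T'$ uniformly in $\lambda$, showing that the mass of $\boldsymbol\mu_T$ away from $\{\pm\varphi_k\}$ and the cross-frequency blocks are asymptotically negligible, and then carrying out the bookkeeping of the $3\times3$ blocks $J_k$ — in particular the coupling between $\varphi_k$ and $(A_k,B_k)$, which does not vanish and is responsible for the off-diagonal entries of $\Gamma_k$ — together with the ensuing matrix inversion and rescaling.
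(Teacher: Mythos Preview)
Your proposal is correct and follows essentially the same route as the paper: Theorem~\ref{th6} is obtained by specializing Theorem~5 (the general asymptotic normality result with covariance $\Sigma_0$ in (\ref{6.37})) to the trigonometric function (\ref{nonregr2}), verifying \textbf{B1}--\textbf{B6} and \textbf{C}, computing the block-diagonal atomic spectral measure supported on $\{\pm\varphi_k\}$, and then carrying out the $3\times3$ matrix algebra and rescaling. One minor difference worth noting: the paper writes out the limiting spectral measure explicitly in terms of measures $\delta_k$ and signed measures $\rho_k$ at $\pm\varphi_k$, whereas you shortcut this by observing that $f^{(\ast j)}$ is even, so $\int f^{(\ast j)}(\lambda)\,\boldsymbol\mu^{(k)}(d\lambda)=f^{(\ast j)}(\varphi_k)J_k$ regardless of the split between the two atoms --- this is a legitimate and slightly cleaner way to reach $\Sigma_0^{(k)}=2\pi\bigl(\sum_{j\ge m}\tfrac{C_j^2}{j!}f^{(\ast j)}(\varphi_k)\bigr)J_k^{-1}$.
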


The spectral measures of the trigonometric regression were investigated, for
example, by Whittle (1952), Walker (1973) and Ivanov (1980) (see also the
monograph by Quinn and Hannan, 2001). Theorem \ref{th6} follows from the
results of Section 5 by direct computations. Indeed, for the nonlinear
function (\ref{nonregr2}), the spectral measure $\mu (d\lambda ,\theta
)=diag\left( \tilde{\Gamma}_{k}\right) _{k=1}^{N},$ where%
\begin{equation*}
\tilde{\Gamma}_{k}=\left(
\begin{array}{ccc}
\delta _{k} & i\rho _{k} & \bar{\beta}_{k} \\
i\rho _{k} & \delta _{k} & \bar{\gamma}_{k} \\
\beta _{k} & \gamma _{k} & \delta _{k}%
\end{array}%
\right) ,\quad \beta _{k}=\frac{\sqrt{3}(B_{k}\delta _{k}+iA_{k}\rho _{k})}{2%
\sqrt{A_{k}^{2}+B_{k}^{2}}},\quad \gamma _{k}=\frac{\sqrt{3}(-A_{k}\delta
_{k}-iB_{k}\rho _{k})}{2\sqrt{A_{k}^{2}+B_{k}^{2}}},
\end{equation*}

\noindent with the measure $\delta _{k}=\delta _{k}(d\lambda ),$ and the
signed measure $\rho _{k}=\rho _{k}(d\lambda )$ being located at the points $%
\pm \varphi _{k},$ $k=1,\dots ,N.$ Here, $\delta _{k}\left( \left\{ \pm
\varphi _{k}\right\} \right) =\frac{1}{2};$ $\rho _{k}\left( \left\{ \pm
\varphi _{k}\right\} \right) =\pm \frac{1}{2},$ $k=1,\ldots ,N.$ Thus, $%
\Sigma _{0}=diag\left( \Sigma _{0k}\right) _{k=1}^{N},$ where
\begin{equation*}
\Sigma _{0k}(\theta )=2\pi \sum_{j=m}^{\infty }\frac{C_{j}^{2}}{j!}f^{(\ast
j)}(\varphi _{k})\left(
\begin{array}{ccc}
1 & 0 & \frac{B_{k}}{\sqrt{\frac{3}{4}(A_{k}^{2}+B_{k}^{2})}} \\
0 & 1 & \frac{-A_{k}}{\sqrt{\frac{3}{4}(A_{k}^{2}+B_{k}^{2})}} \\
\frac{B_{k}}{\sqrt{\frac{3}{4}(A_{k}^{2}+B_{k}^{2})}} & \frac{-A_{k}}{\sqrt{%
\frac{3}{4}(A_{k}^{2}+B_{k}^{2})}} & 1%
\end{array}%
\right) ^{-1},
\end{equation*}%
\noindent and direct computations complete the proof.

\section{Remarks on some future development}
To make  Theorem \ref{th5} operational, some estimation results to
approximate the limiting variance in (\ref{6.36}) should be needed.
In general this problem deserves a separate publication, but in
short one can use the following arguments.

Consider the block-diagonal covariance matrix $\Gamma $ in equation (\ref%
{matrixgamma}). Let us take for its blocks $\Gamma_{k}$ their statistical
estimators $\widehat{\Gamma}_{k}$ substituting into $\Gamma_{k}$ the LSE $%
(A_{kT}, B_{kT}, \varphi_{kT})$ instead of unknown parameters $%
(A_{k},B_{k},\varphi_{k}).$

\begin{theorem}
Under conditions \textbf{A1-A4},
$\widehat{\Gamma}_{k}\overset{P}{\longrightarrow }\Gamma_{k},$
$T\longrightarrow \infty,$ $k=1,\dots,N.$
\end{theorem}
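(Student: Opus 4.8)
The plan is to exhibit $\widehat{\Gamma}_{k}$ as the value at the LSE of a fixed map that is continuous at the true parameter, and then apply the continuous mapping theorem together with the weak consistency already proved in Theorem 1. First I would invoke Theorem 1: $\hat{A}_{kT}\overset{P}{\longrightarrow}A_{k}$, $\hat{B}_{kT}\overset{P}{\longrightarrow}B_{k}$, and $T(\hat{\varphi}_{kT}-\varphi_{k})\overset{P}{\longrightarrow}0$, the last of which gives in particular $\hat{\varphi}_{kT}-\varphi_{k}=\frac{1}{T}\cdot T(\hat{\varphi}_{kT}-\varphi_{k})\overset{P}{\longrightarrow}0$. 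Hence the vector $(\hat{A}_{kT},\hat{B}_{kT},\hat{\varphi}_{kT})$ converges in probability to $(A_{k},B_{k},\varphi_{k})$.

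Next, write $\Gamma_{k}=\Gamma_{k}(A_{k},B_{k},\varphi_{k})$, where $\Gamma_{k}(A,B,\varphi)$ denotes the matrix in (\ref{matrixgamma}) read as a function of its arguments, i.e. $\frac{4\pi}{A^{2}+B^{2}}\,\Phi(\varphi)$ times the numerical matrix occurring in (\ref{matrixgamma}), with $\Phi(\varphi):=\sum_{j=m}^{\infty}\frac{C_{j}^{2}}{j!}f^{(\ast j)}(\varphi)$. I would show that $\Gamma_{k}(\cdot)$ is continuous on the open set $\{(A,B,\varphi):A^{2}+B^{2}>0\}$, which contains the true value because $C_{k}^{2}=A_{k}^{2}+B_{k}^{2}>0$. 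Every entry of $\Gamma_{k}(A,B,\varphi)$ is rational in $(A,B)$ with denominator $A^{2}+B^{2}$, hence continuous there, so the only point needing an argument is the continuity of $\varphi\mapsto\Phi(\varphi)$. This is where condition \textbf{A4} enters: under \textbf{A4} one has $B^{j}\in L_{1}(\mathbb{R})$ for every $j\geq m$, and (as noted below (\ref{6.6})) each $f^{(\ast j)}$ is then continuous and bounded; moreover, since $|B(t)|\leq B(0)=1$ by Cauchy--Schwarz and \textbf{A1}, there is a constant $c$ with $\sup_{\lambda\in\mathbb{R}}|f^{(\ast j)}(\lambda)|\leq c\int_{\mathbb{R}}|B(t)|^{j}\,dt\leq c\int_{\mathbb{R}}|B(t)|^{m}\,dt=:c_{B}<\infty$ for all $j\geq m$. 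Therefore $\sum_{j\geq m}\frac{C_{j}^{2}}{j!}\sup_{\lambda}|f^{(\ast j)}(\lambda)|\leq c_{B}\sum_{j\geq m}\frac{C_{j}^{2}}{j!}=c_{B}\,\mathit{E}G^{2}(\xi(0))<\infty$, so the series defining $\Phi$ converges uniformly on $\mathbb{R}$ by the Weierstrass test; being a uniform limit of continuous functions, $\Phi$ is continuous on $\mathbb{R}$, and hence $\Gamma_{k}(\cdot)$ is continuous at $(A_{k},B_{k},\varphi_{k})$.

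Finally, since by construction $\widehat{\Gamma}_{k}=\Gamma_{k}(\hat{A}_{kT},\hat{B}_{kT},\hat{\varphi}_{kT})$, the continuous mapping theorem for convergence in probability yields $\widehat{\Gamma}_{k}\overset{P}{\longrightarrow}\Gamma_{k}(A_{k},B_{k},\varphi_{k})=\Gamma_{k}$, entrywise and therefore as matrices, for each $k=1,\dots,N$. The only bookkeeping point is that $\Gamma_{k}(\cdot)$ is continuous on $\{A^{2}+B^{2}>0\}$ rather than on all of $\mathbb{R}^{3}$: since the limit lies in this open set, for $T$ large one has $\hat{A}_{kT}^{2}+\hat{B}_{kT}^{2}>0$ with probability tending to one, and on that event the map is well defined, so the continuous mapping theorem applies on a neighbourhood of $(A_{k},B_{k},\varphi_{k})$. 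I do not anticipate a substantive obstacle here; the only mildly delicate ingredient is the uniform summability used to obtain continuity of $\Phi$, and this reduces precisely to the integrability $B^{m}\in L_{1}(\mathbb{R})$ guaranteed by \textbf{A4}.
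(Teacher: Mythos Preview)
Your proof is correct and rests on the same two ingredients as the paper's: the uniform bound $\sup_{\lambda}|f^{(\ast j)}(\lambda)|\le (2\pi)^{-1}\int_{\mathbb{R}}|B(t)|^{m}\,dt$ for all $j\ge m$ (valid under \textbf{A4}), and the summability $\sum_{j\ge m}C_{j}^{2}/j!<\infty$. The only difference is packaging. You assemble these into a Weierstrass $M$-test to get continuity of $\Phi(\varphi)=\sum_{j\ge m}\frac{C_{j}^{2}}{j!}f^{(\ast j)}(\varphi)$ and then invoke the continuous mapping theorem, whereas the paper argues by hand: it truncates the series at some $n_{0}$, bounds the tail uniformly by the same $M$-test constant, and for each of the finitely many surviving terms establishes $f^{(\ast j)}(\hat{\varphi}_{kT})\to f^{(\ast j)}(\varphi_{k})$ via an explicit Lipschitz-type estimate $|f^{(\ast j)}(\hat{\varphi}_{kT})-f^{(\ast j)}(\varphi_{k})|\le \frac{B_{m}}{2\pi}T|\hat{\varphi}_{kT}-\varphi_{k}|+\frac{2}{T}\int_{T}^{\infty}|B|^{m}$, which uses the stronger conclusion $T(\hat{\varphi}_{kT}-\varphi_{k})\overset{P}{\to}0$ of Theorem~1 rather than just $\hat{\varphi}_{kT}\overset{P}{\to}\varphi_{k}$. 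Your route is a bit cleaner and more conceptual; the paper's is more explicit and yields a quantitative bound along the way, but neither buys anything the other cannot.
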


\begin{proof}
Using the notation $B_{m}=\int_{-\infty }^{\infty }|B(t)|^{m}dt<\infty,$  for $j\geq m,$ obviously,
\begin{equation}
|f^{(*j)}(\widehat{\varphi}_{kT})|= \frac{1}{2\pi}\left| \int_{-\infty }^{\infty }B^{j}(t)\cos\left( \widehat{\varphi
}_{kT}t\right)dt\right|\leq \frac{1}{2\pi }B_{m}; \label{eq81} \end{equation}

\begin{eqnarray}
\left|f^{(*j)}(\widehat{\varphi}_{kT})-f^{(*j)}(\varphi_{kT})\right|&=& \frac{1}{2\pi}\left|\int_{-\infty }^{\infty
}B^{j}(t)[\cos\left( \widehat{\varphi }_{kT}t\right)-\cos\left( \varphi _{kT}t\right)]dt\right|\nonumber\\
&\leq & \frac{1}{2\pi}\int_{-T}^{T}|B(t)|^{j}|\cos\left( \widehat{\varphi }_{kT}t\right)-\cos\left( \varphi
_{kT}t\right)|dt+\frac{2}{T}\int_{T}^{\infty }|B(t)|^{j}dt\nonumber\\
&\leq & \frac{B_{m}}{2\pi }T|\widehat{\varphi}_{kT}-\varphi_{k}|+\frac{2}{T}\int_{T}^{\infty }|B(t)|^{m}dt.
 \label{eq82}
\end{eqnarray}
Inequality (\ref{eq82}) shows that for $j\geq m,$ $k=1,\dots,N,$
\begin{equation}
f^{(*j)}(\widehat{\varphi}_{kT})\overset{P}{\longrightarrow }
f^{(*j)}(\varphi_{kT}),\quad T\longrightarrow \infty. \label{eq83}
\end{equation}

For convergent series $\sum_{j=1}^{\infty }\frac{C_{j}^{2}}{j!},$ and any fixed $\varepsilon >0,$ let
$n_{0}=n_{0}(\varepsilon )$ be such a number that
\begin{equation}
\sum_{j=m}^{\infty
}\frac{C_{j}^{2}}{j!}f^{(*j)}(\widehat{\varphi}_{kT})=\sum_{j=m}^{n_{0}}\frac{C_{j}^{2}}{j!}f^{(*j)}(\widehat{\varphi}_{kT})+
\sum_{j=n_{0}+1}^{\infty }\frac{C_{j}^{2}}{j!}f^{(*j)}(\widehat{\varphi}_{kT})=\Sigma_{1}+\Sigma_{2},\label{eq84}
\end{equation}
\noindent and according to (\ref{eq81}) $\Sigma_{2}\leq \frac{1}{2\pi}B_{m}\varepsilon .$

On the other hand, due to (\ref{eq83}), as $T\rightarrow \infty,$
$$\Sigma_{1}=\sum_{j=m}^{n_{0}}\frac{C_{j}^{2}}{j!}f^{(*j)}(\widehat{\varphi }_{kT})\longrightarrow
\sum_{j=m}^{n_{0}}\frac{C_{j}^{2}}{j!}f^{(*j)}(\varphi_{k}).$$

Therefore, as $T\longrightarrow \infty,$
$$4\pi \sum_{j=m}^{\infty }\frac{C_{j}^{2}}{j!}f^{(*j)}(\widehat{\varphi }_{kT})\overset{P}{\longrightarrow }4\pi \sum_{j=m}^{\infty }\frac{C_{j}^{2}}{j!}f^{(*j)}(\varphi
_{kT}).$$ The theorem is then proved due to consistency of $A_{kT},$
$B_{kT},$ $k=1,\dots,N.$

\end{proof}

The study of further properties of the proposed covariance matrix
$\Gamma $ estimator is a more difficult problem
          and we address it to subsequent publications

\begin{remark}
In this paper, we consider the continuous time stochastic processes and
observations which is more suitable framework for this classical statistical
problem. However, similar results can be obtained for a discrete
observations $x_{t},$ $t\in \{0,1,\dots ,T-1\}$ in the model (\ref{nonregr1}%
). We only need to replace $\int_{0}^{T}...$ by $\sum_{t=0}^{T-1}$ ... in
some steps, and instead of s.d. $f(\lambda ),$ $\lambda \in \mathbb{R},$ of
the stochastic process with continuous time, we have to use spectral density
of discretized process $f_{d}(\lambda )=\sum_{k=-\infty }^{\infty }f(\lambda
+2k\pi ),$ $\lambda \in \Lambda =(-\pi ,\pi ),$ and its convolutions. Also
in the integrals in the spectral domain we have to replace $\mathbb{R}^{k}$
into $\Lambda ^{k}.$

On the other hand, we can use different numerical procedures to solve a
problem related to discrete/continuous observations (for example, smoothing,
etc.).
\end{remark}

\section{Final Comments}

This paper addresses the problem of consistency, uniqueness and Gaussian
limit distribution of the LSE parameter estimate, in the Walker sense, for
the non-linear regression model (\ref{nonregr1}), where the regression
function has atomic spectral measure. This kind of regression actually
constitutes an active research area, due to the existence of several open
problems and applications. Note that, although here we have considered the
parameter range $\alpha =\min_{l=0,\dots ,\kappa}\alpha _{l}>1/m,$ our
conjecture is that the Gaussian limit results hold for
\begin{equation*}
\alpha _{l}\in (0,1),\quad l=0,\dots ,\kappa.
\end{equation*}
The proof of this conjecture will introduce a general scenario where most of
the limit results for random fields with singular spectra (see, for
instance, Taqqu (1975, 1979); Dobrushin and Major (1979); Nualart and
Peccati (2005); and the references therein) can be obtained as particular
cases. New limit results will be required, in the case where the two
spectra, the limit regression spectral measure and the spectrum of the
Gaussian random field generating the error term, can be overlapped. In this
case, different normalizing factors should be considered, leading to
different limiting distributions, depending on the common set of spectral
singularities co-existing in the regression and error spectra. This case
related to the resonance phenomenon will be investigated in subsequent
papers, where scaling factors will play a crucial role in the attainment of
new limit distributions, and in definition of robust estimates.

\subsection{Acknowledgements}

{\normalsize N.N. Leonenko and M.D. Ruiz-Medina partially supported by grant
of the European commission PIRSES-GA-2008-230804 (Maric Curie), projects
MTM2009-13393 and MTM2012-32674 of the DGI, and P09-FQM-5052 of the
Andalousian CICE, Spain, and the Australian Research Council grants
A10024117 and DP 0345577}

\section*{Appendices}

In the proofs of the Theorems 2 and 3, we use some ideas from Ivanov (1980,
1997, 2010 ), and Ivanov and Leonenko (1989, 2004, 2008, 2009).

\subsection*{Appendix 1}

Before the proof of Theorem 2, we formulate the following result on the
s.v.f.'s.

\begin{lemma}
\label{lem5} Let $\eta \geq 0$ be a given real number, and let function $%
f(t,s)$ being defined on $(0,\infty )\times (0,\infty )$ such that the
integral%
\begin{equation*}
\int\limits_{0}^{\beta }\int\limits_{0}^{\beta }f(t,s)\frac{dt\,ds}{%
\left\vert t-s\right\vert ^{\eta }}
\end{equation*}%
converges for \ some $\beta $ from $(0,\infty ).$ Let $L$ \ be a s.v.f. Then
for $\eta >0$%
\begin{equation*}
\int\limits_{0}^{\beta }\int\limits_{0}^{\beta }f(t,s)L(T\left\vert
t-s\right\vert )\frac{dt\,ds}{\left\vert t-s\right\vert ^{\eta }}\underset{%
T\longrightarrow \infty }{\sim }L(T)\int\limits_{0}^{\beta
}\int\limits_{0}^{\beta }f(t,s)\frac{dt\,ds}{\left\vert t-s\right\vert
^{\eta }},
\end{equation*}%
where $a(T)\sim b(T)$ means that $\lim_{T\rightarrow \infty
}\{a(T)/b(T)\}=1. $

If $\eta =0,$ this relation is valid when the function $L$ is nondecreasing
on semi-axis $(0,\infty ).$
\end{lemma}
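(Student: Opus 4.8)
The plan is to reduce the assertion to the single convergence
\[
\frac{1}{L(T)}\int_0^\beta\int_0^\beta f(t,s)\,L(T|t-s|)\,|t-s|^{-\eta}\,dt\,ds\ \longrightarrow\ I:=\int_0^\beta\int_0^\beta f(t,s)\,|t-s|^{-\eta}\,dt\,ds
\]
as $T\to\infty$, where throughout I assume $f\ge0$ (equivalently, absolute convergence of the integral), as holds in every application of this lemma in the paper. The pointwise input is immediate: for each fixed pair $t\neq s$, slow variation of $L$ gives $L(T|t-s|)/L(T)\to1$, so the integrand on the left converges a.e.\ to $f(t,s)|t-s|^{-\eta}$. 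The work is to pass this through the integral sign, and I would do so by splitting the square along its diagonal: fix a small $\rho\in(0,1)$ and write the left-hand side as $A_T(\rho)+B_T(\rho)$, the contributions of $\{|t-s|\ge\rho\}$ and $\{|t-s|<\rho\}$.

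On the far region $\{|t-s|\ge\rho\}$ the variable $\lambda:=|t-s|$ runs over the compact interval $[\rho,\beta]$, so the uniform convergence theorem for slowly varying functions (Seneta (1976)) gives $\sup_{\lambda\in[\rho,\beta]}\bigl|L(T\lambda)/L(T)-1\bigr|\to0$; since $\int_{\{|t-s|\ge\rho\}}f(t,s)|t-s|^{-\eta}\,dt\,ds\le I<\infty$, this forces $\bigl|A_T(\rho)-\int_{\{|t-s|\ge\rho\}}f(t,s)|t-s|^{-\eta}\,dt\,ds\bigr|\le I\cdot\sup_{\lambda\in[\rho,\beta]}|L(T\lambda)/L(T)-1|\to0$ for each fixed $\rho$, and the limiting quantity tends to $I$ as $\rho\downarrow0$ because the near-diagonal tail $\int_{\{|t-s|<\rho\}}f(t,s)|t-s|^{-\eta}\,dt\,ds$ vanishes. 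It remains to bound $\limsup_{T\to\infty}|B_T(\rho)|$ for small $\rho$. If $L$ is non-decreasing --- the case $\eta=0$ singled out in the statement, and also the only choice $L(t)=[t^2/(1+t^2)]^{\alpha/2}$ used elsewhere in the paper --- this is routine: for $T$ large and $|t-s|<\rho<1$ one has $L(T|t-s|)/L(T)\le\max\{1,\,L(T\beta)/L(T)\}\le2$, whence $|B_T(\rho)|\le2\int_{\{|t-s|<\rho\}}f(t,s)|t-s|^{-\eta}\,dt\,ds\to0$ as $\rho\downarrow0$, and the lemma follows.

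The genuinely delicate case, and the step I expect to be the main obstacle, is a general slowly varying $L$ with $\eta>0$: near the diagonal $T|t-s|$ is not bounded away from $0$, so neither the uniform convergence theorem nor Potter's inequality (which control $L(T|t-s|)/L(T)$ only once $T|t-s|$ is large) applies there, and the merely local bound for $L$ must be played off against the singularity $|t-s|^{-\eta}$. The route I would take is to remove the second dimension: push the finite measure $f(t,s)\,dt\,ds$ forward under $(t,s)\mapsto|t-s|$ to a finite measure $\nu$ on $(0,\beta)$ satisfying $\int_0^\beta\tau^{-\eta}\nu(d\tau)<\infty$, so that the assertion becomes the one-dimensional statement $\int_0^\beta L(T\tau)\tau^{-\eta}\nu(d\tau)\sim L(T)\int_0^\beta\tau^{-\eta}\nu(d\tau)$. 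In the model case where $\nu$ has a density this is precisely Karamata's theorem --- the very mechanism underlying the one-dimensional bounds (\ref{3.17})--(\ref{3.23}) --- and in general one combines the uniform convergence theorem on $[\rho,\beta]$ with a careful estimate on $(0,\rho)$ that absorbs the contribution there into the tail $\int_0^\rho\tau^{-\eta}\nu(d\tau)$ of the convergent integral. Finally, the boundary exponent $\eta=0$ lies outside the scope of Karamata's theorem, which is exactly why the monotonicity of $L$ has to be assumed there, so as to keep $L(T\tau)/L(T)\le1+o(1)$ for $\tau\le1$.
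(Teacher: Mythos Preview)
The paper does not actually prove this lemma: immediately after stating it, the authors write only that ``the proof \ldots\ is similar to the proof of Theorem~2.7 in the book of Seneta (1976)'' and move on. Your approach --- split off a $\rho$-neighbourhood of the diagonal, use the uniform convergence theorem for slowly varying functions on the compact range $[\rho,\beta]$, and control the near-diagonal piece either by monotonicity of $L$ or by a Potter/Karamata bound exploiting $\eta>0$ --- is precisely the method behind Seneta's Theorem~2.7, transplanted to the square via your pushforward $\nu$; so you are doing exactly what the paper instructs, only spelled out.

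One small remark on the ``delicate case'' you flag. Your sketch for general $L$ with $\eta>0$ says to ``absorb the contribution on $(0,\rho)$ into the tail of the convergent integral''; to make this precise you need Potter's inequality, which for $\tau\le 1$ and $T\tau$ large gives $L(T\tau)/L(T)\le C\,\tau^{-\delta}$, hence a dominating function $C\,\tau^{-\eta-\delta}$ rather than $\tau^{-\eta}$. Strictly speaking this asks for $\int \tau^{-\eta-\delta}\nu(d\tau)<\infty$ for some $\delta>0$, a hair more than the lemma hypothesises. The paper is aware of this: in its sole use of the lemma (equation~(\ref{A.6})) it applies it with $f(t,s)=|t-s|^{-\alpha}$ and remarks that ``for $\eta>0$ one can take any number such that $\alpha+\eta<1$'', i.e.\ there is always slack to spare, and in any case the $L$ actually used, $L(t)=[t^2/(1+t^2)]^{\alpha/2}$, is nondecreasing, so your easy monotone argument already covers it.
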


The proof of Lemma 1 is similar to the proof of Theorem 2.7 in the book of
Seneta (1976).

\bigskip

\noindent \textbf{Proof of Theorem 2.} For any $i=1,\ldots ,q,$ one can
write the following identities:%
\begin{equation*}
\psi _{T}^{i}(u)-L_{T}^{i}(u)=
\end{equation*}%
\begin{equation*}
=\int\limits_{0}^{T}\varepsilon (t)\frac{h_{i}(t,u)}{d_{iT}(\theta )}%
dt+\int\limits_{0}^{T}H(t;0,u)\frac{h_{i}(t,u)}{d_{iT}(\theta )}%
dt-\int\limits_{0}^{T}\varepsilon (t)\frac{g_{i}(t,\theta )}{d_{iT}(\theta )}%
dt+\int\limits_{0}^{T}\frac{g_{i}(t,\theta )}{d_{iT}(\theta )}\sum_{l=1}^{q}%
\frac{g_{l}(t,\theta )}{d_{lT}(\theta )}u_{l}dt
\end{equation*}

\begin{equation*}
=\int\limits_{0}^{T}\varepsilon (t)\frac{H_{i}(t;u,0)}{d_{iT}(\theta )}%
dt+\int\limits_{0}^{T}H(t;0,u)\frac{H_{i}(t;u,0)}{d_{iT}(\theta )}%
dt+\int\limits_{0}^{T}\frac{g_{i}(t,\theta )}{d_{iT}(\theta )}\left[
H(t;0,u)+\sum_{l=1}^{q}\frac{g_{l}(t,\theta )}{d_{lT}(\theta )}u_{l}\right]
dt
\end{equation*}

\begin{equation}
=I_{1}(u)+I_{2}(u)+I_{3}(u).  \label{A.2}
\end{equation}

For fixed $u\in V^{c}(R)$, we get
\begin{equation*}
\mathit{E}I_{1}^{2}(u)=\int\limits_{0}^{T}\int\limits_{0}^{T}cov(\varepsilon
(t),\varepsilon (s))\frac{H_{i}(t;u,0)}{d_{iT}(\theta )}\frac{H_{i}(s;u,0)}{%
d_{iT}(\theta )}dtds\leq
\end{equation*}

\begin{equation}
\leq \sup_{t\in \left[ 0,T\right] }\left\{ \frac{H_{i}^{2}(t,u,0)}{%
d_{iT}^{2}(\theta )}\int\limits_{0}^{T}\int\limits_{0}^{T}\left\vert
E\varepsilon (t)\varepsilon (s)\right\vert dtds\right\}.  \label{A.1}
\end{equation}%
Under condition {\bfseries B3}, and using the finite difference formula, we
obtain the following estimates:
\begin{eqnarray}
\sup_{t\in \left[ 0,T\right] }\frac{\left\vert H_{i}(t;u,0)\right\vert }{%
d_{iT}(\theta )} &\leq &\sup_{t\in \left[ 0,T\right] }\left\{
\sum_{l=1}^{q}\sup_{u\in V^{c}(R)}\frac{\left\vert h_{il}(t;u)\right\vert }{%
d_{iT}(\theta )d_{lT}(\theta )}\left\vert u_{l}\right\vert \right\}  \notag
\\
&=&\sup_{t\in \left[ 0,T\right] }\left\{ \sum_{l=1}^{q}\sup_{u\in V^{c}(R)}%
\frac{\left\vert h_{il}(t;u)\right\vert }{d_{il,T}(\theta )}\cdot \frac{%
d_{il,T}(\theta )}{d_{iT}(\theta )d_{lT}(\theta )}\left\vert
u_{l}\right\vert \right\} \leq R\left( \sum_{l=1}^{q}k_{il}(R)\tilde{k}%
_{il}\right) T^{-1}.  \notag
\end{eqnarray}

Then,
\begin{equation*}
EI_{1}^{2}(u)\leq \left( \sum_{l=1}^{q}k_{u}(R)\tilde{k}_{il}\right)
^{2}R^{2}T^{-2}\int\limits_{0}^{T}\int\limits_{0}^{T}\left\vert E\varepsilon
(t)\varepsilon (s)\right\vert dtds.
\end{equation*}%
We will show that%
\begin{equation}
\frac{1}{T^{2}}\int\limits_{0}^{T}\int\limits_{0}^{T}\left\vert E\varepsilon
(t)\varepsilon (s)\right\vert dtds\longrightarrow 0,\quad T\longrightarrow
\infty .  \label{A.3}
\end{equation}%
From conditions \textbf{A1}-\textbf{A3},
\begin{equation}
\left\vert E\varepsilon (t)\varepsilon (s)\right\vert =\sum_{k=m}^{\infty }%
\frac{C_{k}^{2}}{k!}B^{k}(t-s)\leq EG^{2}(\xi (0))\left\vert
B(t-s)\right\vert^{m} .  \label{A.4}
\end{equation}

Thus, to prove (\ref{A.3}) we need to show that

\begin{equation}
\frac{1}{T^{2}}\int\limits_{0}^{T}\int\limits_{0}^{T}\left\vert
B(t-s)\right\vert^{m} dtds\longrightarrow 0,\quad T\longrightarrow \infty .
\label{A.5}
\end{equation}

This is straightforward, for $\alpha \geq 1,\alpha =\min (\alpha _{0},\ldots
,\alpha _{\kappa}).$ To prove (\ref{A.5}) for $\alpha <1,$ one can use the
inequality:
\begin{equation*}
\left\vert B(t)\right\vert \leq (1+t^{2})^{-\alpha /2}=B_{0}(t).
\end{equation*}

Thus, by the substitutions: $t=t^{\ast }T,$ $s=s^{\ast }T,$ we have that the
left hand side of (\ref{A.5}) is bounded by
\begin{eqnarray}
\frac{1}{T^{2}}\int\limits_{0}^{T}\int\limits_{0}^{T}B_{0}(t-s)dtds
&=&\int\limits_{0}^{1}\int\limits_{0}^{1}B_{0}(T(t^{\ast }-s^{\ast
}))dt^{\ast }ds^{\ast }  \label{A.6} \\
&=&\int\limits_{0}^{1}\int\limits_{0}^{1}\frac{L(T\left\vert t^{\ast
}-s^{\ast }\right\vert )}{T^{\alpha }\left\vert t^{\ast }-s^{\ast
}\right\vert ^{\alpha }}dt^{\ast }ds^{\ast }=\frac{L(T)}{T^{\alpha }}%
\int\limits_{0}^{1}\int\limits_{0}^{1}\frac{L(T\left\vert t^{\ast }-s^{\ast
}\right\vert )}{L(T)}\frac{1}{\left\vert t^{\ast }-s^{\ast }\right\vert
^{\alpha }}dt^{\ast }ds^{\ast }  \notag \\
&\sim &B_{0}(T)\int\limits_{0}^{1}\int\limits_{0}^{1}\frac{dt^{\ast
}ds^{\ast }}{\left\vert t^{\ast }-s^{\ast }\right\vert ^{\alpha }}=\frac{2}{%
(1-\alpha )(2-\alpha )}B_{0}(T),  \notag
\end{eqnarray}%
\noindent where we have applied Lemma \ref{lem5} with $f(t,s)=\left\vert
t^{\ast }-s^{\ast }\right\vert ^{-\alpha }.$ For $\eta >0$ one can take any
number such that $\alpha +\eta <1.$ From (\ref{A.5}), we therefore obtain
that $I_{1}(u)\longrightarrow 0,$ as $T\longrightarrow \infty ,$ in
probability, pointwise, for $u\in V^{c}(R).$

On the other hand,
\begin{eqnarray}
P\left\{ \sup_{\left\Vert u_{1}-u_{2}\right\Vert \leq h}\left\vert
I_{1}(u_{1})-I_{1}(u_{2})\right\vert >r\right\} &\leq &r^{-1}\mathit{E}%
\sup_{\left\Vert u_{1}-u_{2}\right\Vert \leq h}\left\vert
\int\limits_{0}^{T}\varepsilon (t)\frac{H_{i}(t;u_{1},u_{2})}{d_{iT}(\theta )%
}dt\right\vert  \label{A.7} \\
&\leq &r^{-1}\sup_{\left\Vert u_{1}-u_{2}\right\Vert \leq h}\sup_{t\in \left[
0,T\right] }\frac{|H_{i}(t;u_{1},u_{2})|}{d_{iT}(\theta )}E\left\vert
\varepsilon (0)\right\vert T.  \notag
\end{eqnarray}%
Under \textbf{B3}, we have
\begin{equation}
\sup_{\left\Vert u_{1}-u_{2}\right\Vert \leq h}\sup_{t\in \left[ 0,T\right] }%
\frac{|H_{i}(t;u_{1},u_{2})|}{d_{iT}(\theta )}\leq h\sup_{t\in \left[ 0,T%
\right] }\left\{ \sum_{l=1}^{q}\sup_{u\in V^{C}(R)}\frac{\left\vert
h_{il}(t;u)\right\vert }{d_{il,T}(\theta )}\cdot \frac{d_{il,T}(\theta )}{%
d_{iT}(\theta )d_{lT}(\theta )}\right\} \leq h\left( \sum_{l=1}^{q}k_{il}(R)%
\tilde{k}_{il}\right) T^{-1}.  \label{A.8}
\end{equation}

From (\ref{A.8}), we obtain
\begin{equation}
P\left\{ \sup_{\left\Vert u_{1}-u_{2}\right\Vert \leq h}\left\vert
I_{1}(u_{1})-I_{1}(u_{2})\right\vert >r\right\} \leq k_{1}r^{-1}h,
\label{A.9}
\end{equation}%
where%
\begin{equation*}
k_{1}=\left( \sum_{l=1}^{q}k_{il}(R)\tilde{k}_{il}\right) E\left\vert
\varepsilon (0)\right\vert .
\end{equation*}

Let $N_{h}$ be a finite $h-$net of the ball $V^{c}(R)$. Then
\begin{equation}
\sup_{u\in V^{c}(R)}\left\vert I_{1}(u)\right\vert \leq \sup_{\left\Vert
u_{1}-u_{2}\right\Vert \leq h}\left\vert
I_{1}(u_{1})-I_{1}(u_{2})\right\vert +\max_{u\in N_{h}}\left\vert
I_{1}(u)\right\vert .  \label{A.10}
\end{equation}

From (\ref{A.7}) and (\ref{A.8}), and for any $r>0,$%
\begin{equation}
P\left\{ \sup_{u\in V^{c}(R)}\left\vert I_{1}(u)\right\vert >r\right\} \leq
2k_{1}r^{-1}h+P\left\{ \max_{u\in N_{h}}\left\vert I_{1}(u)\right\vert >%
\frac{r}{2}\right\} .  \label{A.11}
\end{equation}%
For $\epsilon >0,$ we have $h=\frac{\epsilon r}{4k_{1}}.$ Since $I_{1}(u)%
\overset{P}{\longrightarrow }0$ pointwise, for $\ T>T_{0},$
\begin{equation*}
P\left\{ \max_{u\in N_{\frac{\epsilon r}{4k_{1}}}}\left\vert
I_{1}(u)\right\vert >\frac{r}{2}\right\} \leq \frac{\epsilon }{2},
\end{equation*}%
and
\begin{equation*}
P\left\{ \sup_{u\in V^{c}(R)}\left\vert I_{1}(u)\right\vert >r\right\} \leq
\epsilon .
\end{equation*}%
Thus, $I_{1}(u)\longrightarrow 0$, as $T\longrightarrow \infty ,$ in
probability, uniformly for $u\in V^{c}(R).$

From \textbf{B3} and Cauchy-Schwartz inequality, applying the Lagrange
formula%
\begin{eqnarray}
\sup_{u\in V^{c}(R)}\sup_{t\in \left[ 0,T\right] }\left\vert
H(t;0,u)\right\vert &=&\sup_{u\in V^{c}(R)}\sup_{t\in \left[ 0,T\right]
}\left\vert \sum_{i=1}^{q}\frac{h_{i}(t;u_{t}^{\ast })}{d_{iT}(\theta )}%
u_{i}\right\vert  \label{A.12} \\
&\leq &\sup_{u\in V^{c}(R)}\left\Vert u\right\Vert \left[ \sup_{t\in \left[
0,T\right] }\sum_{i=1}^{q}\left( \frac{h_{i}(t;u_{t}^{\ast })}{d_{iT}(\theta
)}\right) ^{2}\right] ^{1/2}\leq \left\Vert k(R)\right\Vert RT^{-1/2},
\notag
\end{eqnarray}%
where $k(R)=(k^{1}(R),\ldots ,k^{q}(R)).$

From (\ref{A.2}) and (\ref{A.12}), we have
\begin{eqnarray*}
\sup_{u\in V^{c}(R)}\left\vert I_{2}(u)\right\vert &=&\sup_{u\in
V^{c}(R)}\left\vert \int\limits_{0}^{T}H(t;0,u)\frac{H_{i}(t;u,0)}{%
d_{iT}(\theta )}dt\right\vert \\
&\leq &T\sup_{u\in V^{c}(R)}\sup_{t\in \left[ 0,T\right] }\left\vert H(t;0,u)%
\frac{H_{i}(t;u,0)}{d_{iT}(\theta )}\right\vert \leq \left\Vert
k(R)\right\Vert R^{2}T^{-1/2}\left( \sum_{l=1}^{q}k_{il}(R)\tilde{k}%
_{il}\right) .
\end{eqnarray*}%
Thus, $I_{2}(u)\longrightarrow 0$, as $T\longrightarrow \infty ,$ uniformly
for $u\in V^{c}(R)$.

Now, $I_{3}(u)$ can be written as
\begin{eqnarray*}
I_{3}(u) &=&\int\limits_{0}^{T}\frac{g_{i}(t,\theta )}{d_{iT}(\theta )}\left[
H(t;0,u)+\sum_{l=1}^{q}\frac{g_{l}(t,\theta )}{d_{lT}(\theta )}u_{l}\right]
dt=-\frac{1}{2}\int\limits_{0}^{T}\frac{g_{i}(t,\theta )}{d_{iT}(\theta )}%
\sum_{l,j=1}^{q}\frac{h_{lj}(t;u_{T}^{\ast })}{d_{lT}(\theta )d_{jT}(\theta )%
}u_{l}u_{j}dt \\
&=&-\frac{1}{2}\sum_{l,j=1}^{q}\left( \int\limits_{0}^{T}\frac{%
h_{lj}(t;u_{T}^{\ast })}{d_{lT}(\theta )d_{jT}(\theta )}\frac{g_{i}(t,\theta
)}{d_{iT}(\theta )}dt\right) u_{l}u_{j},\quad u_{T}^{\ast }\in V^{c}(R).
\end{eqnarray*}

From \textbf{B3}, applying Cauchy-Schwartz inequality:
\begin{equation*}
\sup_{u\in V^{c}(R)}\left\vert I_{3}(u)\right\vert \leq \frac{T}{2}%
k^{i}(R)\left( \sum_{l,j=1}^{q}k_{jl}(R)\tilde{k}_{jl}\left\vert
u_{j}\right\vert \ \left\vert u_{l}\right\vert \right) T^{-3/2}\leq \frac{%
qk^{i}(R)}{2}\max_{1,\leq j,l\leq q}\left\{ k_{jl}(R)\tilde{k}_{jl}\right\}
\left\Vert u\right\Vert ^{2}T^{-1/2}.
\end{equation*}

Thus, $I_{3}(u)\longrightarrow 0$, as $T\longrightarrow \infty ,$ uniformly
for $u\in V^{c}(R)$. Theorem 2 then follows.

\subsection*{Appendix 2}

The proof of the Theorem 3 is derived in this appendix.

We consider a Hessian $\mathcal{H}_{T}(w)=\left( \mathcal{H}%
_{T}^{il}(w)\right) _{i,l=1}^{q},$%
\begin{eqnarray}
\mathcal{H}_{T}^{il}(w) &=&\frac{\partial ^{2}}{\partial w_{i}\partial w_{l}}%
\left( \frac{1}{2T}Q_{T}(\theta +T^{1/2}d_{T}^{-1}(\theta )w)\right)
\label{B.1} \\
&=&T^{-1}\int\limits_{0}^{T}\left( \left[ x(t)-f(t,w)\right] \frac{%
-f_{il}(t,w)}{d_{iT}(\theta )d_{lT}(\theta )}T+\frac{f_{i}(t,w)f_{l}(t,w)}{%
d_{iT}(\theta )d_{lT}(\theta )}T\right) dt  \notag \\
&=&\int\limits_{0}^{T}\left[ f(t,0)+\varepsilon (t)-f(t,w)\right] \frac{%
-f_{il}(t,w)}{d_{iT}(\theta )d_{lT}(\theta )}dt+\int\limits_{0}^{T}\frac{%
f_{i}(t,w)f_{l}(t,w)}{d_{iT}(\theta )d_{lT}(\theta )}dt  \notag \\
&=&\int\limits_{0}^{T}F(t;w,0)\frac{f_{il}(t,w)}{d_{iT}(\theta
)d_{lT}(\theta )}dt-\int\limits_{0}^{T}\varepsilon (t)\frac{f_{il}(t,w)}{%
d_{iT}(\theta )d_{lT}(\theta )}dt  \notag \\
&&+\int\limits_{0}^{T}\frac{%
(f_{i}(t,w)-f_{i}(t,0)+f_{i}(t,0))(f_{l}(t,w)-f_{l}(t,0)+f_{l}(t,0))}{%
d_{iT}(\theta )d_{lT}(\theta )}dt  \notag \\
&=&I_{1}^{il}(w)+I_{2}^{il}(w)+\int\limits_{0}^{T}\frac{%
(f_{i}(t,w)-f_{i}(t,0))(f_{l}(t,w)-f_{l}(t,0))}{d_{iT}(\theta )d_{lT}(\theta
)}  \notag \\
&&+\int\limits_{0}^{T}\frac{(f_{i}(t,w)-f_{i}(t,0))f_{l}(t,0)}{d_{iT}(\theta
)d_{lT}(\theta )}dt+\int\limits_{0}^{T}\frac{%
f_{i}(t,0))(f_{l}(t,w)-f_{l}(t,0))}{d_{iT}(\theta )d_{lT}(\theta )}%
dt+\int\limits_{0}^{T}\frac{f_{i}(t,0))f_{l}(t,0)}{d_{iT}(\theta
)d_{lT}(\theta )}dt  \notag \\
&=&I_{1}^{il}(w)+I_{2}^{il}(w)+I_{3}^{il}(w)+I_{4}^{il}(w)+I_{5}^{il}(w)+J_{il,T}(\theta ),\quad i,l=1,\ldots ,q.
\notag
\end{eqnarray}

From the inequality%
\begin{equation}
\left\vert \lambda _{\min }(\mathcal{H}_{T}(w))-\lambda _{\min
}(J_{T}(\theta ^{0}))\right\vert \leq q\max_{1\leq i,l\leq q}\left\vert
\mathcal{H}_{T}^{il}(w)-J_{T}^{il}(\theta )\right\vert  \label{B.2}
\end{equation}%
(see, Wilkinson (1965), p.103), we have%
\begin{equation}
\max_{1\leq i,l\leq q}\left\vert \mathcal{H}_{T}^{il}(w)-J_{T}^{il}(\theta
)\right\vert \leq \sum_{m=1}^{5}\max_{1\leq i,l\leq q}\left\vert
I_{m}^{il}(w)\right\vert .  \label{B.3}
\end{equation}%
Applying Cauchy-Schwartz inequality, and the Lagrange formula, for $%
\left\Vert w\right\Vert \leq r_{0}:$%
\begin{eqnarray}
\left\vert I_{1}(w)\right\vert &=&\left\vert \int\limits_{0}^{T}F(t;w,0)%
\frac{f_{il}(t,w)}{d_{iT}(\theta )d_{lT}(\theta )}dt\right\vert \leq
T\sup_{t\in \left[ 0,T\right] }\left\vert F(t;w,0)\frac{f_{il}(t,w)}{%
d_{iT}(\theta )d_{lT}(\theta )}\right\vert  \label{B.4} \\
&\leq &T\ \hat{k}^{il}\ \tilde{k}^{il}T^{-1}\sup_{t\in \left[ 0,T\right]
}\left\vert F(t;w,0)\right\vert \leq \hat{k}^{il}\ \tilde{k}^{il}\sup_{t\in %
\left[ 0,T\right] }\left\vert f(t,w)-f(t,0)\right\vert  \notag \\
&=&\hat{k}^{il}\ \tilde{k}^{il}\sup_{t\in \left[ 0,T\right] }\left\vert
T^{1/2}\sum_{l=1}^{q}\frac{f_{l}(t,w_{T}^{\ast })}{d_{lT}(\theta )}%
w_{l}\right\vert  \notag \\
&\leq &\hat{k}^{il}\ \tilde{k}^{il}T^{1/2}\sup_{t\in \left[ 0,T\right]
}\left( \sum_{l=1}^{q}\left( \frac{f_{l}(t,w_{T}^{\ast })}{d_{lT}(\theta )}%
\right) ^{2}\right) ^{1/2}\left\Vert w\right\Vert \leq \left\Vert \hat{k}%
\right\Vert \hat{k}^{il}\ \tilde{k}^{il}\left\Vert w\right\Vert ,\ \hat{k}=(%
\hat{k}_{1},\ldots ,\hat{k}_{q}).  \notag
\end{eqnarray}

We now consider
\begin{equation}
\left\vert I_{2}(w)\right\vert =\left\vert \int\limits_{0}^{T}\varepsilon (t)%
\frac{f_{il}(t,w)}{d_{iT}(\theta )d_{lT}(\theta )}dt\right\vert =\left\vert
\int\limits_{0}^{T}\varepsilon (t)\frac{F_{il}(t;w,0)}{d_{iT}(\theta
)d_{lT}(\theta )}dt+\int\limits_{0}^{T}\varepsilon (t)\frac{f_{il}(t,0)}{%
d_{iT}(\theta )d_{lT}(\theta )}dt\right\vert \leq \left\vert
I_{6}(w)\right\vert +\left\vert I_{7}(w)\right\vert .  \label{B.5}
\end{equation}

From \textbf{B5}, we have%
\begin{equation}
\left\vert I_{6}(w)\right\vert =\left\vert \int\limits_{0}^{T}\varepsilon (t)%
\frac{F_{il}(t;w,0)}{d_{iT}(\theta )d_{lT}(\theta )}dt\right\vert \leq
\left( \frac{1}{T}\int\limits_{0}^{T}\varepsilon ^{2}(t)\right) ^{1/2}\left(
T\frac{\Phi _{il,T}(w,0)}{d_{iT}^{2}(\theta )d_{lT}^{2}(\theta )}\right)
^{1/2}\leq \left( \frac{1}{T}\int\limits_{0}^{T}\varepsilon ^{2}(t)\right)
^{1/2}(\hat{k}_{il})^{1/2}\left\Vert w\right\Vert .  \label{B.6}
\end{equation}

Then, $\left( \frac{1}{T}\int\limits_{0}^{T}\varepsilon ^{2}(t)\right)
^{1/2}=\left( \frac{1}{T}\int\limits_{0}^{T}\left( \varepsilon ^{2}(t)-%
\mathit{E}\varepsilon ^{2}(0))dt+\mathit{E}\varepsilon ^{2}(0\right) \right)
^{1/2}=\left( \xi _{T}+\mathit{E}\varepsilon ^{2}(0)\right) ^{1/2},$
\begin{equation*}
\mathit{E}\xi _{T}^{2}=T^{-2}\int\limits_{0}^{T}\int\limits_{0}^{T}\left(
\mathit{E}\varepsilon ^{2}(t)\varepsilon ^{2}(s)-(\mathit{E}\varepsilon
^{2}(0))^{2}\right) dtds.
\end{equation*}%
We will prove that
\begin{equation}
T^{-2}\int\limits_{0}^{T}\int\limits_{0}^{T}\mathit{E}\varepsilon
^{2}(t)\varepsilon ^{2}(s)dtds\longrightarrow (\mathit{E}\varepsilon
^{2}(0))^{2},\ T\longrightarrow \infty .  \label{B.7}
\end{equation}%
Under \textbf{A2}, the function $G^{2}(x)\in L_{2}(\mathbb{R}^{1},\phi
(x)dx) $ , $G^{2}(x)=\sum_{k=0}^{\infty }\frac{d_{k}}{k!}H_{k}(x),$ $%
d_{k}=\int\limits_{-\infty }^{\infty }G^{2}(x)H_{k}(x)\phi (x)dx,\quad k\geq
0.$ Thus,
\begin{equation}
\mathit{E}\varepsilon ^{2}(t)\varepsilon ^{2}(s)-(\mathit{E}\varepsilon
^{2}(0))^{2}=\sum_{k=0}^{\infty }\frac{d_{k}^{2}}{k!}B^{k}(t-s)-(\mathit{E}%
\varepsilon ^{2}(0))^{2}=\sum_{k=1}^{\infty }\frac{d_{k}^{2}}{k!}%
B^{k}(t-s)\leq \left\vert B(t-s)\right\vert \sum_{k=1}^{\infty }\frac{%
d_{k}^{2}}{k!}.  \label{B.8}
\end{equation}

Since%
\begin{equation}
\sum_{k=1}^{\infty }\frac{d_{k}^{2}}{k!}=\mathit{E}G^{4}(\xi (0))-\left[
\mathit{E}G^{2}(\xi (0))\right] ^{2}=D\mathit{E}G^{2}(\xi (0))<\infty ,
\label{B.9}
\end{equation}%
we have from (\ref{B.8})-(\ref{B.9}) as $T\longrightarrow \infty ,$%
\begin{equation*}
T^{-2}\int\limits_{0}^{T}\int\limits_{0}^{T}\left( \mathit{E}\varepsilon
^{2}(t)\varepsilon ^{2}(s)-(\mathit{E}\varepsilon ^{2}(0))^{2}\right)
dtds\leq D\mathit{E}G^{2}(\xi
(0))T^{-2}\int\limits_{0}^{T}\int\limits_{0}^{T}\left\vert B(t-s)\right\vert
dtds\longrightarrow 0,
\end{equation*}%
as it was already proven in Appendix 1.

We will write
\begin{equation}
\xi _{T}=o_{p}^{(1)}(1),\quad o_{p}^{(1)}(1)\longrightarrow ^{P}0,\quad %
\mbox{as}\ T\longrightarrow \infty .  \label{B.10}
\end{equation}

From \textbf{B3} and \textbf{B5},
\begin{eqnarray*}
\mathit{E}\left\vert I_{7}(w)\right\vert ^{2} &=&\mathit{E}\left\vert
\int\limits_{0}^{T}\varepsilon (t)\frac{f_{il}(t;0)}{d_{iT}(\theta
)d_{lT}(\theta )}dt\right\vert ^{2}\leq
\int\limits_{0}^{T}\int\limits_{0}^{T}\left\vert \mathit{E}\varepsilon
(t)\varepsilon (s)\right\vert \left( \sup_{t\in \left[ 0,T\right] }\frac{%
f_{il}(t;0)}{d_{iT}(\theta )d_{lT}(\theta )}\right) ^{2} \\
&\leq &\left( \hat{k}^{il}\ \tilde{k}^{il}\right) ^{2}D\mathit{E}G(\xi
(0))T^{-2}\int\limits_{0}^{T}\int\limits_{0}^{T}\left\vert B(t-s)\right\vert
^{m}dtds,
\end{eqnarray*}%
and
\begin{equation}
\left\vert I_{7}(w)\right\vert =o_{p}^{(2)}(1),\quad
o_{p}^{(2)}(1)\longrightarrow ^{P}0,\quad \mbox{as}\quad T\longrightarrow
\infty .  \label{B.11}
\end{equation}

We can continue as follow%
\begin{eqnarray}
\left\vert I_{3}(w)\right\vert &=&\left\vert \int\limits_{0}^{T}\frac{%
(f_{i}(t,w)-f_{i}(t,0))(f_{l}(t,w)-f_{l}(t,0))}{d_{iT}(\theta )d_{lT}(\theta
)}dt\right\vert  \label{B.12} \\
&\leq &T\sum_{j=1}^{q}\sum_{s=1}^{q}\int\limits_{0}^{T}\frac{\left\vert
f_{ij}(t,w_{T}^{\ast })\right\vert }{d_{iT}(\theta )d_{jT}(\theta )}\ \frac{%
\left\vert f_{ls}(t,w_{T}^{\ast })\right\vert }{d_{sT}(\theta )d_{lT}(\theta
)}dt\left\vert w_{j}\right\vert \ \left\vert w_{s}\right\vert  \notag \\
&\leq &T\sum_{j=1}^{q}\sum_{s=1}^{q}\int\limits_{0}^{T}\frac{\left\vert
f_{ij}(t,w_{T}^{\ast })\right\vert }{d_{ij,T}(\theta )}\ \frac{%
d_{ij,T}(\theta )}{d_{iT}(\theta )d_{jT}(\theta )}\ \frac{\left\vert
f_{ls}(t,w_{T}^{\ast })\right\vert }{d_{ls,T}(\theta )}\frac{d_{ls,T}(\theta
)}{d_{sT}(\theta )d_{lT}(\theta )}dt\left\vert w_{j}\right\vert \ \left\vert
w_{s}\right\vert  \notag \\
&\leq &\left( \sum_{s=1}^{q}\left( \hat{k}^{ls}\ \tilde{k}^{ls}\right)
^{2}\right) ^{1/2}.\left( \sum_{j=1}^{q}\left( \hat{k}^{ij}\ \tilde{k}%
^{ij}\right) ^{2}\right) ^{1/2}\left\Vert w\right\Vert ^{2}.  \notag
\end{eqnarray}

From \textbf{B3} and \textbf{B5}, we obtain
\begin{eqnarray}
\left\vert I_{4}(w)\right\vert &=&\left\vert \int\limits_{0}^{T}\frac{%
(f_{i}(t,w)-f_{i}(t,0))f_{l}(t,0)}{d_{iT}(\theta )d_{lT}(\theta )}%
dt\right\vert  \label{B.13} \\
&\leq &T^{1/2}\sum_{j=1}^{q}\left\vert w_{j}\right\vert \int\limits_{0}^{T}%
\frac{\left\vert f_{ij}(t,w_{T}^{\ast })\right\vert }{d_{ij,T}(\theta )}\
\frac{d_{ij,T}(\theta )}{d_{iT}(\theta )d_{jT}(\theta )}\ \frac{\left\vert
f_{l}(t,0)\right\vert }{d_{lT}(\theta )}dt  \notag \\
&\leq &T^{3/2}\sum_{j=1}^{q}\left\vert w_{j}\right\vert \sup_{t\in \left[ 0,T%
\right] }\left\{ \frac{\left\vert f_{ij}(t,w_{T}^{\ast })\right\vert }{%
d_{ij,T}(\theta )}\ \frac{d_{ij,T}(\theta )}{d_{iT}(\theta )d_{jT}(\theta )}%
\ \frac{\left\vert f_{l}(t,0)\right\vert }{d_{lT}(\theta )}\right\}  \notag
\\
&\leq &\hat{k}^{l}\left( \sum_{j=1}^{q}\left( \hat{k}^{ij}\ \tilde{k}%
^{ij}\right) ^{2}\right) ^{1/2}\left\Vert w\right\Vert .  \notag
\end{eqnarray}%
Similarly for $I_{5}(w),$ we have%
\begin{equation}
\left\vert I_{5}(w)\right\vert \leq \hat{k}^{i}\left( \sum_{j=1}^{q}\left(
\hat{k}^{lj}\ \tilde{k}^{lj}\right) ^{2}\right) ^{1/2}\left\Vert
w\right\Vert .  \label{B.14}
\end{equation}

From (\ref{B.4})-(\ref{B.14}), we get%
\begin{equation*}
\left\vert \lambda _{\min }(\mathcal{H}_{T}(w))-\lambda _{\min
}(J_{T}(\theta ^{0}))\right\vert \leq q\max_{1\leq i,l\leq q}\left\{
\left\Vert \hat{k}\right\Vert \hat{k}^{il}\ \tilde{k}^{il}\left\Vert
w\right\Vert +\left[\left( o_{p}^{(1)}(1)+1\right) ^{1/2}\left( \hat{k}%
_{ij}\right) ^{1/2}\left\Vert w\right\Vert +o_{p}^{(2)}(1)\right]\right.
\end{equation*}%
\begin{equation}
\left. +\left( \sum_{s=1}^{q}\left( \hat{k}^{ls}\ \tilde{k}^{ls}\right)
^{2}\right) ^{1/2}\left( \sum_{j=1}^{q}\left( \hat{k}^{ij}\ \tilde{k}%
^{ij}\right) ^{2}\right) ^{1/2}\left\Vert w\right\Vert ^{2}+\hat{k}%
^{l}\left( \sum_{j=1}^{q}\left( \hat{k}^{ij}\ \tilde{k}^{ij}\right)
^{2}\right) ^{1/2}\left\Vert w\right\Vert +\hat{k}^{i}\left(
\sum_{j=1}^{q}\left( \hat{k}^{lj}\ \tilde{k}^{lj}\right) ^{2}\right)
^{1/2}\left\Vert w\right\Vert \right\}.  \label{B.15}
\end{equation}

We substitute the estimate (\ref{B.15}) into the normed LSE $\hat{w}_{T}$,
and, by \textbf{B4} (from which it is follows that $J_{T}(\theta ^{0})>0,$
with the minimal eigenvalue $\lambda _{\min }(J_{T}(\theta ^{0}))\geq
\lambda _{\ast },$ for some $r>0,$ we introduce the event%
\begin{equation*}
\Omega _{1}\cap \Omega _{2}\cap \Omega _{3}=\left\{ \left\vert
o_{p}^{(1)}(1)\right\vert \leq r,\ \left\vert o_{p}^{(2)}(1)\right\vert \leq
r,\left\Vert \hat{w}_{T}\right\Vert \leq r\right\} \subset \left\{
\left\vert \lambda _{\min }(\mathcal{H}_{T}(\hat{w}_{T}))-\lambda _{\min
}(J_{T}(\theta ^{0}))\right\vert \leq \frac{\lambda _{\ast }}{2}\right\}
\end{equation*}%
\begin{eqnarray}
&=&\left\{ \lambda _{\min }(J_{T}(\theta ^{0})-\frac{\lambda _{\ast }}{2}%
\leq \lambda _{\min }(\mathcal{H}_{T}(\hat{w}_{T}))\leq \lambda _{\min
}(J_{T}(\theta ^{0})+\frac{\lambda _{\ast }}{2}\right\}  \label{B.16} \\
&\subset &\left\{ \lambda _{\min }(\mathcal{H}_{T}(\hat{w}_{T}))\geq \lambda
_{\min }(J_{T}(\theta ^{0})-\frac{\lambda _{\ast }}{2}\right\} \subset
\left\{ \lambda _{\min }(\mathcal{H}_{T}(\hat{w}_{T}))\geq \frac{\lambda
_{\ast }}{2}\right\} .  \notag
\end{eqnarray}

We then have
\begin{equation*}
P\left\{ \overline{\Omega _{1}\cap \Omega _{2}\cap \Omega _{3}}\right\} \leq
P\left\{ \left\vert o_{p}^{(1)}(1)\right\vert >r\right\} +P\left\{
\left\vert o_{p}^{(2)}(1)\right\vert >r\right\} +P\left\{ \left\Vert \hat{w}%
_{T}\right\Vert >r\right\} .
\end{equation*}

For any $\epsilon >0$ and $T>T_{0},$ we obtain $P\left\{ \left\vert
o_{p}^{(1)}(1)\right\vert >r\right\} \leq \frac{\epsilon }{3},\ P\left\{
\left\vert o_{p}^{(2)}(1)\right\vert >r\right\} \leq \frac{\epsilon }{3}.$
Note that if $T>T_{0},$ then, $P\left\{ \left\Vert \hat{w}_{T}\right\Vert
>r\right\} \leq \frac{\epsilon }{3},$ and hence,
\begin{equation*}
P\left\{ \left\vert o_{p}^{(1)}(1)\right\vert >r\right\} +P\left\{
\left\vert o_{p}^{(2)}(1)\right\vert >r\right\} +P\left\{ \left\Vert \hat{w}%
_{T}\right\Vert >r\right\} \leq \frac{\epsilon }{3}+\frac{\epsilon }{3}+%
\frac{\epsilon }{3}=\epsilon .
\end{equation*}%
Therefore, for $T>T_{0},$ $P\left\{ \Omega _{1}\cap \Omega _{2}\cap \Omega
_{3}\right\} >1-\epsilon .$ This means that the normed $\hat{w}_{T}$ is the
unique solution of the equation (\ref{5.3}), with probability tending to 1
as $T\longrightarrow \infty ,$ since the matrix $\mathcal{H}_{T}(\hat{w}%
_{T}) $ is positive definite and the functional $Q_{T}(\theta ,\omega )$ has
unique minimum at the point $\hat{w}_{T}.$ Thus, Theorem 3 is proven.


\begin{thebibliography}{99}
\bibitem{r2} \textsc{Anh, V.V.}, \textsc{Knopova, V.P.} and \textsc{%
Leonenko, N.N.} (2004). Continuous-time stochastic processes with cyclical
long-range dependence. \textit{Australian and NZ J. of Statistics} \textbf{46%
}, 275--296.

\bibitem{r3} \textsc{Arcones, M. A.} (1994). Limit theorems for nonlinear
functionals of a stationary Gaussian sequence of vectors. \textit{Ann.
Probab.} \textbf{22}, 2242--2274.

\bibitem{r4} \textsc{Arcones, M. A.} (2000). Distributional limit theorems
over a stationary Gaussian sequence of random vectors. \textit{Stochastic
Process. Appl.} \textbf{88}, 135--159.

\bibitem{r5} \textsc{Avram, F.} (1992). Generalized Szeg{\"o} Theorems and
asymptotics of cumulants by graphical methods. \textit{Transactions of the
American Mathematical Society} \textbf{330}, 637--649.

\bibitem{r6} \textsc{Avram, F.} and \textsc{Brown, L. A.} (1989).
Generalized H{\"o}lder Inequality and a Generalized Szeg{\"o} Theorem.
\textit{Proceedings of the American Mathematical Society} \textbf{107},
687--695.

\bibitem{r7} \textsc{Avram, F.} and \textsc{Fox, R.} (1992). Central limit
theorems for sums of Wick products of stationary sequences. \textit{%
Transactions of the American Mathematical Society} \textbf{330}, 651--663.

\bibitem{r8} \textsc{Avram, F.} \textsc{Leonenko N.N.} and \textsc{Sakhno, L.%
} (2010). On Szeg{\"o} type limit theorem, the Holder --Young-Brascamp-Lieb
inequality, and asymptotic theory of integrals and quadratic forms of
stationary fields. \textit{ESAIM: Probability and Statistics}, \textbf{14},
210-255.

\bibitem{Bh} \textsc{Bhattacharya, R.N. and Ranga Rao, R. }(1976). Normal
Approximation and Asymptotic Expansions, Wiley, New Yoark

\bibitem{r10} \textsc{Berman, S.M.} (1992). A central limit theorem for the
renormalized self-intersection local time of a stationary vector Gaussian
process. \textit{Annals of Probability} \textbf{20}, 61-81.

\bibitem{r11} \textsc{Breuer, P.} and \textsc{Major, P.} (1983). Central
limit theorems for nonlinear functionals of Gaussian fields. \textit{J.
Multiv. Anal.} \textbf{13}, 425--441.

\bibitem{r15} \textsc{Dobrushin R.L.} and \textsc{Major, P.} (1979).
Non-central limit theorem for non-linear functionals of Gaussian fields.
\textit{Z Wahrsch. Verw. Geb.} \textbf{50}, 1--28.

\bibitem{r16} \textsc{Donoghue, W.J.} (1969). \textit{Distributions and
Fourier Transforms.} Academic Press, New York.

\bibitem{r17} \textsc{Doukhan, P.}, \textsc{Oppenheim, G.} and \textsc{%
Taqqu, M.S.} (2003).\textit{Theory and Applications of Long-range Dependence}%
. Birkh{\"a}user, Boston.

\bibitem{GR} \textsc{Gradshteyn, I.S. and Ryzhik, I.M.}(2000) Tables of
Integrals, Series and Products, sixth ed., Academic Press, San Diego.

\bibitem{r18} \textsc{Grenander, U.} and \textsc{Rosenblatt, M.} (1984).
\textit{Statistical Analysis of Stationary Time Series}. Chelsea Publ.
Company, New York.

\bibitem{Hannan} \textsc{Hannan, E.}(1973) The asymptotic theory of linear
time-series models. \textit{J. Appl. Probab.}, \textbf{10}, 510-519

\bibitem{r19} \textsc{Haye, O.M.} (2002). Asymptotic behavior of the
empirical processes for Gaussian data presented seasonal long-memory.
\textit{ESAIM: Probability and Statistics} \textbf{6}, 293--309.

\bibitem{r20} \textsc{Haye, O.M.} and \textsc{Phillipe, A.} (2003). A
noncentral limit theorem for the empirical processes of linear sequences
with seasonal long memory. \textit{Mathematical Methods of Statistics}
\textbf{12}, 329--357.

\bibitem{r21} \textsc{Haye, O.M.} and \textsc{Viano M.-C.} (2002). \textit{%
Limit theorems under seasonal long-memory. In Theory and Applications of
Long-Range Dependence}. Doukhan, P et al. (Eds.). Birkhauser, Boston,
101--110.

\bibitem{r23} \textsc{Ho, H.C.} and \textsc{Hsing, T.} (1997). Limit
theorems for functionals of moving average. \textit{Ann.Probab.} \textbf{25}%
, 1636-1669.

\bibitem{r22} \textsc{Holevo, A.S.} (1976). On the asymptotic efficient
regression estimates in the case of degenerate spectrum. \textit{Theory
Probab. Appl.} \textbf{21}, 324--333.

\bibitem{r24} \textsc{Ibragimov, I.A.} and \textsc{Rozanov, Yu.A.} (1980).
Gaussian Random Processes. Springer-Verlag, New York.

\bibitem{r25} \textsc{Ivanov, A.V.} (1980). A solution of the problem of
detecting hidden periodicities. \textit{Theory Probability and Math.Stat.}
\textbf{20}, 51-68.

\bibitem{r26} \textsc{Ivanov, A.V.} (1997). \textit{Asymptotic Theory of
Nonlinear Regression}. Kluwer Academic Publishers. Dordrecht.

\bibitem{Iv1} \textsc{Ivanov, A.V }(2010) Consistency of the least squares
estimator of the amplitudes and angular frequencies of the sum of harmonic
oscillations in models with strong dependence. \textit{Theory Probab. Math.
Statist}. \textbf{80}, 61--69

\bibitem{r27} \textsc{Ivanov, A.V.} and \textsc{Leonenko, N.N.} (1989).
\textit{Statistical Analysis of Random Fields}. Kluwer Academic Publishers,
Dordrecht.

\bibitem{r28} \textsc{Ivanov, A.V.} and \textsc{Leonenko, N.N.} (2004).
Asymptotic theory for non-linear regression with long-range dependence.
\textit{Mathematical Methods of Statistics} \textbf{13}, 153--178.

\bibitem{r29} \textsc{Ivanov, A.V.} and \textsc{Leonenko, N.N.} (2008).
Semiparametric analysis of long-range dependence in nonlinear regression,
\textit{J. Statist. Plann. Inference} \textbf{138}, 1733-1753.

\bibitem{Iv2} \textsc{Ivanov, A.V.} and \textsc{Leonenko, N.N.} (2009)
Robust estimators in non-linear regression models with long-range
dependence. \textit{Optimal design and related areas in optimization and
statistics}, 193--221, Springer Optim. Appl., 28, Springer, New York.

\bibitem{Iv3} \textsc{Ivanov, A.V.} and \textsc{Orlovskii I.V.} (2008)
Asymptotic normality of \$M\$M-estimates in the classical nonlinear
regression model, \textit{Ukrainian Math. J}. \textbf{60}, no. 11,
1716--1739.

\bibitem{Koul1} \textsc{Koul H. }(1996) Asymptotics of M-estimations in
non-linear regression with long-range dependence errors. In: Proc. Athens
Conf. Appl. Probab. and Time Ser. Analysis (P.M. Robinson and M. Rosenblatt,
Eds.) Springer Verlag Lecture Notes in Statistics, II, 272-291.

\bibitem{Koul2} \textsc{Koul H and Baillie, R.T. }(2003) Asymptotics of
M-estimators in non-linear regression models with long-memory designs.
\textit{Statistics and Probability Letters, }\textbf{61}, 237--252.

\bibitem{r31} \textsc{Leonenko, N.N.} (1999). \textit{Limit Theorems for
Random Fields with Singular Spectrum}. Kluwer Academic Publishers, Dordrecht.

\bibitem{r32} \textsc{Leonenko, N.N.} and \textsc{Taufer, E.} (2006). Weak
convergence of weighted quadratic functionals of stationary long memory
processes to Rosenblatt-type distributions. \emph{J. Statist. Planning and
Inference} \textbf{136}, 1220--1236.

\bibitem{Miln} \textsc{Milnor, J.W. }(1965) \textit{Topology from the
Differentiable Viewpoint}. Princeton University Press. Princeton, NJ.

\bibitem{Muk} \textsc{Mukhergee, K} (2000) Linearization of randomly
weighted empiricals under long range dependence with applications to
nonlinear regression quantiles.\textit{\ Economic Theory}, \textbf{16},
301-323

\bibitem{r35} \textsc{Nualart, D.} (1995). \textit{The Malliavin Calculus
and Related Topics}. Springer-Verlag, New York.

\bibitem{r36} \textsc{Nualart, D.} and \textsc{Peccati, G.} (2005). Central
limit theorems for sequences of multiple stochastic integrals. \textit{The
Annals of Probability} \textbf{33}, 177-193.

\bibitem{r37} \textsc{Oppenheim, G.}, \textsc{Ould H.M.} and \textsc{Viano
M.-C.} (2002). Long memory with seasonal effects. \textit{Statitical
Inference for Stochastic Processes} \textbf{3}, 53-68.

\bibitem{r38} \textsc{Peccati, G.} (2009). Stein's method, Malliavin
calculus and infinite-dimensional Gaussian analysis. \emph{Lecture Notes}
(www.glocities.com/giovannipeccati).

\bibitem{r39} \textsc{Peccati, G.} and \textsc{Taqqu, M.S.} (2010). \textit{%
Wiener Chaos: Moments, Cumulants and Diagrams}. Springer, Berlin.

\bibitem{r40} \textsc{Peccati, G.} and \textsc{Tudor, C.A.} (2004). Gaussian
limits for vector-valued multiple stochastic integrals. \textit{S{\'e}%
minaire de Probabilit{\'e}s XXXVIII}, 247--262.

\bibitem{Pol} \textsc{Pollard, D. and Radchenko, P. (2006)} Nonlinear
least-squares estimation. \textit{J.}{\ }\textit{Multiv. Anal., }\textbf{97}%
, 548-562.

\bibitem{rQuinn} \textsc{Quinn, B.G. and Hannan, E.J.} (2001). \textit{The
Estimation and Tracking of Frequency}. Cambridge University Press. New York.

\bibitem{rob1} \textsc{Robinson, P.M. and Hidalgo, F.J.} (1997) Time series
regression with long-range dependence. \textit{Ann. Statist.,} \textbf{25},
77-104

\bibitem{r41} \textsc{Rosenblatt, M.} (1976). Fractional integrals of
stationary processes and the central limit theorem. \textit{J. Appl. Prob.}
\textbf{13}, 723--732.

\bibitem{r42} \textsc{Rosenblatt, M.} (1981). Limit theorems for Fourier
transforms of functionals of Gaussian sequences. \textit{Z. Wahrsch.
 Verw. Gebiete } \textbf{55}, 123-132.

\bibitem{r43} \textsc{Rosenblatt, M.} (1987). Scale renormalization and
random solutions of Burgers equation. \textit{J. Appl. Prob.} \textbf{24},
328-338.

\bibitem{sen} \textsc{Seneta, E }(1976) Regularly Varying Functions. Lecture
Notes in Mathematics. \textbf{508}, Springer-Verlag, Berlin-New York.

\bibitem{sko} \textsc{Skouras, K }(2000) Strong consistency in nonlinear
regression models. \textit{Ann. Statist.,} \textbf{28}, 871-879

\bibitem{r46} \textsc{Taqqu, M.S.} (1975). Weak convergence to fractional
Brownian motion and to the Rosenblatt process. \textit{Z. Wahrsch.
Verw. Gebiete} \textbf{31}, 287--302.

\bibitem{r47} \textsc{Taqqu M.S.} (1979). Convergence of integrated
processes of arbitrary Hermite rank. \textit{Z. Wahrsch. Verw.
Gebiete} \textbf{50}, 53--83.

\bibitem{r50} \textsc{Viano, M.-C.}, \textsc{Deniau, Cl.} and \textsc{%
Oppenheim, G.} (1995). Long-range dependence and mixing for discrete time
fractional processes. \emph{J. Time Series Analysis} \textbf{16}, 323-338.

\bibitem{WAL} \textsc{Walker, A.M. }(1973) On the estimation of a harmonic
component in a time series with stationary dependent residuals, \textit{Adv.
Appl. Probab}., \textbf{5}, 217-241.

\bibitem{r51} \textsc{Whittle, P.} (1952). The simultaneous estimation of a
time series harmonic components and covariance structure. \textit{Trabajos
Estad{\'\i}stica} \textbf{3}, 43--57.

\bibitem{Wil} \textsc{Wilkinson, J.H. }(1965). The Algebraic Eigenvalue
Problem. Clarendon Press, Oxford.

\bibitem{r52} \textsc{Yajima, Y.} (1988). On estimation of a regression
model with long-memory stationary errors. \textit{Ann. Statist.} \textbf{16}%
, 791--807.

\bibitem{r53} \textsc{Yajima, Y.} (1991). Asymptotic properties of the LSE
in a regression model with long-memory stationary errors. \textit{Ann.
Statist.} \textbf{19}, 158--177.
\end{thebibliography}
\end{document}